\def\qed{\hfill $\triangle$}
\def\text#1{\mbox{#1}}
\newcommand{\x}[1]{}
\numberwithin{theorem}{section}
\numberwithin{lemma}{section}
\numberwithin{definition}{section}
\numberwithin{proposition}{section}
\numberwithin{corollary}{section}
\numberwithin{remark}{section}
\numberwithin{rem}{section}
\numberwithin{example}{section}
\numberwithin{exmp}{section}
\newcommand{\cl}{\mbox{\rm cl}^*}
\newcommand{\ra}{\rangle}
\newcommand{\la}{\langle}
\newcommand{\st}{\stackrel}
\newcommand{\Limsup}{\mathop{{\rm Lim}\,{\rm sup}}}
\newcommand{\ox}{\bar{x}}
\def\gph{\mathop{\rm gph\,}}
\def\dom{\mathop{\rm Dom\,}}
\def\N{{\mathbb{N}}}
\def\R{{\mathbb{R}}}
\def\S{{\mathbb{S}}}
\title{ Metric Regularity of  the Sum  of   Multifunctions and Applications.
\thanks{Research  partially supported by the Australian Research Council, Project DP110102011,  by NAFOSTED under grant number  $101.01-2011.56$,   by   ECOS-SUD  under the Project C10E08, by LIA ``FormathVietnam" and by  R\'egion  Limousin.}}
\author{
Huynh Van Ngai
\and   Nguyen  Huu Tron
 \and
Michel Th\'era \thanks{ The third author would like to thank Alexander Kruger for valuable comments.}
 }
\institute{
V. N Huynh \at
Department of Mathematics, University of
Quy Nhon, 170 An Duong Vuong, Quy Nhon, Viet Nam.\\
\email{nghiakhiem@yahoo.com}
\and 
H.T Nguyen \at
University of
Quy Nhon and Laboratoire XLIM, UMR-CNRS 6172,
 Universit\'e de   Limoges. \\
 \email{huutronnguyen@yahoo.com}
\and
M.Th\'era (\Letter\,)\ \at
Laboratoire XLIM, UMR-CNRS 6172,
 Universit\'e de   Limoges and Adjunct professor, University of Ballarat.
\email{michel.thera@unilim.fr}\\
 }
\date{Received: date / Accepted: date}
\journalname{J Optim Theory Appl}
\begin{document}

\maketitle

\begin{abstract}
In this work, we use the theory of  error bounds to study  metric regularity  of  the sum of two multifunctions,  as well as some important properties of variational systems.  We use an approach based on the metric regularity of epigraphical multifunctions. Our results  subsume some recent results by Durea and Strugariu.
 \keywords{
Error bound \and
Metric regularity \and Pseudo-Lipschitz property \and Sum-stability \and Variational systems \and Coderivative}
\subclass{49J52 \and 49J53 \and 49K27 \and 90C34}
\end{abstract}
\section{Introduction}\label{intro}

In this paper, we are especially interested in metric regularity of the sum of two multifunctions. The starting point of the study is the famous Lyusternik-Graves Theorem \cite{Lusternik, Graves}, which reduces the problem of regularity of a strictly differentiable single-valued mapping between Banach spaces to that of its linear approximation. Historical  comments and modern interpretations and extensions of this theorem can be found in \cite{[20],ioffe01}. Particularly, it was observed in Dmitruk, Milyutin \& Osmolovsky \cite{[20]} that the original Lyusternik's proof in  \cite{Lusternik} is applicable to a much more general setting: the sum of a covering at a rate  mapping and a Lipschitz one with suitable constants  is covering at the rate. Extensions to the case of the sum of a metrically regular set-valued mapping and a single-valued Lipschitz map with suitable constants appear in  \cite{ Arut, Arut1, RefAz, dontchev,[21], Dont-Roc, ioffe01 },  (see the references therein for more details).
\vskip 2mm
 For the parametric case, it is well-known (see for instance,  Dmitruk \& Kruger \cite{[18]},  Arag\'on Artacho, Dontchev, Gaydu,  Geoffroy, and Veliov \cite{FAMM}) that,  
 if we perturb a metrically regular mapping  $F$ by a mapping  $g(\cdot,\cdot)$,  Lipschitz with respect to $x$, uniformly in $p$, with a  sufficiently small Lipschitz constant,   then the perturbed mapping    $F(\cdot)+g(\cdot,p)$ is metrically regular for every $p$ near $\bar{p}$. More generally, Ioffe \cite{RefIo} extended  this result to the case of the sum of a metrically regular multifunction and a Lipschitz one,  and also to the more general case,  when if a multifunction $G$ is sufficiently close to the  given metrically regular multifunction $F$ in the  sense given in \cite{RefIo}, then $G$ is necessarily metrically regular, with suitable constants (see also \cite{SIOPT-NT}). 
\vskip 2mm
When  we perturb a   metrically regular  multifunction   by another set-valued mapping which is  pseudo-Lipschitz,  the perturbed mapping, i.e., the sum set-valued mapping fails in general to be metrically regular,  (we refer to the  example in the next section). However, if  for example  the so-called " sum-stability" property (introduced below) holds, then  the metric regularityf, as well as the  pseudo-Lipschitz  property  of the variational system,  remains. Recently,   Durea \& Strugariu \cite{Dustru}  considered the  sum of two set-valued mappings and obtained a result    very similar to openness of the sum of two set-valued mappings. They also  gave some applications to  generalized variational systems. 
\vskip 2mm
Motivated by  the ideas and results  from \cite{Dustru},  we attack these problems by  using  a different approach and with rather different assumptions.  Indeed, using an  approach based on the theory of   error bounds, we study   metric regularity of a  special multifunction called the  epigraphical multifunction associated to $F$ and $G$.  This intermediate result   allows us to study metric regularity/ linear openness of the sum of two set-valued mappings, as well as   metric regularity of the general variational system,  avoiding the strong assumption of the  closedness of the sum multifunction.
\vskip 2mm
The paper is structured  as follows. Section 2 is devoted to preliminaries where we introduce the problem  of generalized parametric inclusions. We give some illustrations through examples and we present  a  small survey on different notion of regularity. In  Section 3, we recall  some recent results on error bounds of parametrized systems  and give, sometimes with  some   modifications,   characterizations of metric regularity of multifunctions  given in   \cite{SIOPT-NT, NTT}. In  Section 4, in the context of Asplund spaces, we estimate the strong slope of the 
lower semicontinuous envelope  of the distance function to the epigraphical multifunction associated to two given multifunctions $F$ and $G$.
Then, we  give   sufficient conditions as well as  a  point-based condition for metric regularity of this epigraphical multifunction under a coderivative condition. In the last section,  we study   Robinson metric regularity  and Aubin property of a generalized variational system.
\section{Preliminaries}
Generalized equations, i.e., inclusions of the type  
\begin{equation}\label{A}
0\in F(x,p),\end{equation}
involving a multifunction $F:X\times P\rightrightarrows Y$ where $X,Y$ are  metric spaces, and $P$ is  a topological space  considered as the space of parameters, 
have been extensively used for
modeling optimization and complementarity problems, as well as variational inequalities since the pioneering
work of Robinson \cite{Rob3,Rob4}. 
The study of generalized equations constitute the core of  the development of set-valued analysis \cite{AF90} which
  is one of the main corner-stones of   variational
analysis, see, e.g.,  books \cite{BZ05, iusem, Dont-Roc, Borisbook1,  penot2012, Roc-Wet}.
A typical example of  (\ref{A}) is given by a parametrized system of inequalities/equalities.  More precisely,  let us consider %
the system  $(\mathcal{S}),$  consisting of those  points $x$ for which
$$\begin{array}{ll}
   f_i(x,p)\leq 0, \; i \in\{1,\cdots ,k\}, \\
    f_i(x,p)=0,\; i \in\{k+1,\cdots ,k+d\},
    \end{array}
    $$
     where $x\in\R^m $ is the decision variable,  $p\in\R^n $ a parameter and  for each $i\in\{1, k+d\},$ and the $ f_i's$  are functions from $\R^m\times \R^n$ to $\R$.  Setting  $f(x,p) = (f_1(x,p), \cdots, f_k(x,p), f_{k+1}(x,p), \cdots, f_{k+d}(x,p)),$ and 
$$F(x,p) := f(x,p)- \R^k _{-}\times\{0\}^{d},$$  the system  $(\mathcal{S})$
can be reformulated in the form 
 $(\ref{A}$).  Let us  also note  that  $(\ref{A})$ includes  the important subcase of parametrized \textit{ generalized inclusions}:
   \begin{equation}\label{generalized}
0\in H(x)+f(x,p),
\end{equation} 
where $H: X\rightrightarrows Y$ is a set-valued mapping and $f:X\times P\to Y$ is a mapping.
 
Let us consider the perturbed optimization problem $(\mathcal{P})$ 
$$\min_{x\in C} [g(x)- \langle p,x\rangle ],$$
where $g:\R^n\to \R$ is a  Fr\'echet differentiable
 function, and $p\in\R^n$ is a given parameter. The first order optimality condition of problem $(\mathcal{P})$  is given by 
\begin{equation}\label{oc} p-\nabla g(x)\in N_C(x). \end{equation}
Here  $N_C$ stands for the normal cone mapping  defined by  $$N_C(x) =\{v \in \R^n\,:\;\langle v,y-x\rangle \leq 0\quad \forall y\in C\}$$ if $x\in C$, and $N_C(x) =\emptyset$  otherwise.
Setting $f(x,p)=\nabla g(x)-p,$  relation $(\ref{oc})$  takes the form 
\begin{equation}\label{gvi}0\in f(x,p) + N_C(x).\end{equation}
Hence,  the first order optimality condition  satisfies the generalized variational inequality  $(\ref{gvi})$  and appears as a special case of equation $ (\ref{generalized})$. 

The study of variational properties and   stability of the solutions of  equation (\ref{A}) has attracted a large  interest from  a large number of authors, and we refer the reader to the monographs   \cite{Borisbook1,  Roc-Wet,  Dont-Roc} and the references therein.

Let us first  provide definitions and properties of some
essential notions from set-valued analysis that will be used
throughout this paper.  In what follows,  $X$, $Y$,  etc.,  unless specified otherwise,  are metric spaces,  and we use the same symbol $d(\cdot,\cdot)$
to denote  the distance in all of them  or between a point $x$ to a subset $\Omega$ of one of them :  $d(x,\Omega): = \inf_{u\in \Omega} d(x,u).$
By $B(x,\rho)$ and $\bar{B}(x,\rho)$  we  denote the open and closed  balls
of radius $\rho$ around $x$,  while, if  $X$ is a normed linear space,  we use the notations $B_{X}, \bar{B}_{X}$ for the open and the closed unit balls, respectively.
By a multifunction  (set-valued mapping)  $S :X \rightrightarrows Y$, we mean a mapping from $X$ into the subsets  (possibly empty) of $Y$. We denote by  $ \gph S $ the graph of $S$, that is the set $\{(x,y)\in X\times Y\,:\, y\in S(x)\}$, and by ${\rm  D} (S):= \{x\in X\,:\, S(x)\neq \emptyset\}$  the domain of $S$. When $S$ has a closed graph, we say that $S$  is a closed multifunction. 

Since various types of 
multifunctions arise in a considerable number of models ranging from
mathematical programs, through game theory and to control and design problems, they represent probably  the most developed class of objects in variational analysis. A number of useful regularity properties have been introduced and investigated
(see \cite{Dont-Roc, Roc-Wet}
 and the references therein).  Among them,  the most popular is that of metric regularity  (\cite{AF90, BZ05,    Dont-Roc,  Borisbook1,  penot2012, Roc-Wet,  Schiro, 1RefBorZ, [71], JT6, JT5, JT1,  Jourani-Thibault1,  1RefJT,   Jourani-Thibault4,     ioffe01, RefIo,   Ng,      [36] ,[18],  SIOPT-NT}), 
 the root of which can be traced back to the classical Banach open mapping theorem and the  subsequent fundamental
results of Lyusternik and Graves (\cite{Graves, Lusternik}). 

A multifunction  $F$ is said to be \emph{{ metrically regular}} around $(\bar
{x},\bar{y})\in\gph F$ with modulus $\tau>0$,  whenever  there exist
neighborhoods $ \mathcal{U},  \mathcal{V}$ of
$\bar{x},\bar{y},$ respectively,  such that, for every $(x,y)\in  \mathcal{U}\times \mathcal{V},$%
\begin{equation}\label{MR}
d(x, F^{-1}(y))\leq \tau d(y, F(x)).
\end{equation}
A classical illustration of this concept concerns  the case when $F$ is a bounded linear continuous operator. Then, metric regularity of $F$ amounts to saying that $F$ is surjective.
In terms of the inverse mapping $S := F^{-1}$, property  $(\ref{MR}) $ can be rewritten equivalently as follows:
\begin{equation} \label{aubin}
d(x, S(y))\leq \kappa d(y,y^\prime) \quad \forall y,y^\prime\in\mathcal{V}, \forall x \in S(y^\prime)\cap \mathcal{U}.
\end{equation}
This gives rise to another well known concept called 
 \emph{pseudo-Lipschitz  property},  also called Lipschitz-like property (see\cite{Borisbook1}),  or  \emph{ Aubin property }  (see\cite{aubin84}) at $(\bar y, \bar x)\in \gph S$ .
The concept of \emph{{openness}}  or   \emph{{covering (at a linear rate)}} is also widely used: one says that $S:X\rightrightarrows
Y$ is {\it open at linear rate} $\tau>0$  around $(\bar{x},\bar{y}%
)\in\gph S$ iff  there exist  neighborhoods $ \mathcal{U},  \mathcal{V}$ of
$\bar{x},\bar{y},$ respectively and,  a positive number
$\varepsilon>0$ such that, for every $(x,y)\in\gph S\cap( \mathcal{U}\times
 \mathcal{V})$ and every $\rho\in]0,\varepsilon[,$%
\[
B(y,\rho \tau )\subset S(B(x,\rho)).
\]
We refer to    \cite{aubin84, [20], ioffe01, kruger88, Borisbook1, [71], Roc-Wet, Schiro} and the references therein for  different developments of these notions.
The following relation is well established:
\begin{equation} \text{Metric regularity} \iff \text{Covering} \iff  \text{Aubin property of the inverse}. \end{equation}
Let us also add that in  Banach spaces, similarly  to the classical calculus, one can formulate sufficient (sub)differential characterizations of properties $(\ref{MR})$  and $(\ref{aubin})$ (see, e.g., \cite{ioffe01, kruger88, Borisbook1}). In Asplund spaces (see\cite{Borisbook1, phelps}  for definitions
and characterizations of Asplund spaces), the corresponding characterizations in terms of Fr\'echet  subdifferentials
(\cite{kruger96, kruger03}) or their limiting counterparts ( \cite{kruger85, KrMo, Boris88, Borisbook1}) and the corresponding coderivatives become
necessary and sufficient.

 From the point of view of applications to optimization (sensitivity analysis, convergence analysis
of algorithms, and penalty functions methods), one of the most important regularity properties seems
to be that of error bounds, providing an estimate for the distance of a point from the solution set. This theory 
 was initiated by the pioneering work by Hoffman \cite{[30]}\footnote{It  has been pointed out recently  to the authors by  Hiriart-Urruty that traces of the error bound property were already in  \cite{PCRo}, published in 1951.}. 
 A general classification scheme
of necessary and sufficient criteria for the error bound property is presented in \cite{FHKO,FHKO1}.
 Applications of the theory  of  error bounds  to the investigation of   metric regularity of multifunctions have been recently studied and developed by many authors,  including for instance  \cite{NTT,  RefAz, AzB, SIOPT-NT,   RefNT3, NT09, DuTronStru}.


 \section{ Metric Regularity of Epigraphical Multifunctions via Error Bounds}
 Let us remind some basic notions  used in  the paper.  Let  $f : X \rightarrow \mathbb{R} \cup
\{+\infty\}$  be a given extended-real-valued function.   As usual, $\dom f := \{x
\in X : f(x) < +\infty\}$ denotes the domain of $f$.
 We recall the concept of error bounds that is one of the most important regularity properties.
 We set
\begin{equation}\label{error}
S:=\{x\in X:\quad f(x)\leq 0\}, \end{equation}
and we  use the symbol $[f(x)]_+$ to denote $\max\{f(x),0\}.$  We say
that  
$f$ satisfies the  {\it an error bound property}  
 iff  there exists  a real $c > 0$  such that \begin{equation}\label{2}
d(x, S) \leq c\big[f(x)]_+ \quad\mbox{ for all}\quad  x\in X.
\end{equation}
For $x_0 \in S$, 
$f$ has a local  
error bound at $x_0$, when there exist reals  $c>0$ and $
\varepsilon > 0$  such that  (\ref{2}) is satisfied for all
$x$ around  $x_0$, i.e.,  in an open  ball  $B(x_0,\varepsilon)$.

Given   a multifunction $F: X\rightrightarrows Y$, we make use  of the lower semicontinuous
envelope $(x,y)\mapsto\varphi_{F}(x,y)$ of the function $(x,y)\mapsto
d(y,F(x)),$ i.e., for $(x,y)\in X\times Y,$
\begin{equation}\label{elie}
\varphi_{F}(x,y):=\liminf_{(u,v)\rightarrow(x,y)}d(v,F(u))=\liminf
_{u\rightarrow x}d(y,F(u)).
\end{equation}
 Recall from  De Giorgi, Marino \& Tosques \cite{RefDMT},
that the
strong slope $\vert\nabla f\vert (x)$ of a lower semicontinuous
function $f$ at $x\in\dom f$ is the quantity defined by
$\vert\nabla f\vert (x)=0$ if $x$ is a local minimum of $f,$ and f
$$\vert\nabla f\vert (x)=\limsup_{y\to x, y\ne x}\frac{f(x)-f(y)}{d(x,y)},$$
otherwise.
For $x\notin \dom f,$  we set  $\vert\nabla f\vert
(x)=+\infty.$  

We now  consider a parametrized inequality system, that
is, the problem of finding $x\in X$ such that
\begin{equation}\label{Para Sys}
f(x,p)\le 0,
\end{equation}
where $f:\; X\times P\to \R\cup\{+\infty\}$ is an
extended-real-valued function, $X$ is a complete  metric space and
$P$ is a topological  space. We denote  by $S(p)$ the set of solutions of  system (\ref{Para Sys}):
$$S(p):=\{x\in X:\quad f(x,p)\le 0\}.$$
The following theorem (\cite[Theorem 2]{NTT}) gives necessary and sufficient conditions for the existence of a local uniform error bound for the parametric system (\ref{Para Sys}).
\begin{theorem}\label{Err Para sys} Let $X$ be a complete metric
space and  $P$ be a topological space. Suppose that the mapping  $f:X\times P\to
\R\cup\{+\infty\}$ satisfies the following conditions for some
$(\bar{x},\bar{p})\in X\times P:$

\item{(a)} $\bar {x}\in S(\bar{p});$

\item{(b)} the mapping $p \mapsto f(\bar {x},p)$ is upper
semicontinuous at $\bar{p};$

\item{(c)} for any $p$ near $\bar{p},$ the mapping $x \mapsto
f(x,p)$ is lower semicontinuous near $\bar {x}.$ \vskip 0.2cm Let
 $\tau>0$ be given. The the following two statements are equivalent:

\item{(i)} There exists a neighborhood $ \mathcal{V} \times  \mathcal{W}\subseteq X\times P$
of $(\bar {x},\bar{p})$ such that for any $p\in \mathcal{W},$ we have $\mathcal{V}\cap
S(p)\not=\emptyset$ and
\begin{equation}\label{ine err}
d(x,S(p))\le \tau[f(x,p)]_+\quad\mbox{for all}\quad (x,p)\in V\times
\mathcal{W}.
\end{equation}
\item{(ii)} There exist a neighborhood $\mathcal{V}\times \mathcal{W}\subseteq X\times P$
of $(\bar {x},\bar{p})$  and a real $\gamma>0$  such that for each
$(x,p)\in \mathcal{V}\times \mathcal{W}$ with $f(x,p)\in (0,\gamma)$ and for any
$\varepsilon>0,$ we can find $z\in X$ such that
\begin{equation}\label{Ine charac}
0<d(x,z)<(\tau+\varepsilon)(f(x,p)-[f(z,p)]_+).
\end{equation}
\end{theorem}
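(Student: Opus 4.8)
The statement is a characterization of a local uniform error bound for the parametric system $(\ref{Para Sys})$ in terms of a nonlocal slope (descent) condition, so the plan is to exploit Ekeland's variational principle. This is legitimate because $X$ is complete and, by hypothesis $(c)$, the function $g_p:=[f(\cdot,p)]_+$ is lower semicontinuous near $\bar x$ for each $p$ close to $\bar p$; note also that $g_p\ge 0$ and that $\{g_p\le 0\}=S(p)$. The inequality $(\ref{Ine charac})$ is nothing but the assertion that the strong slope of $g_p$ at $x$ exceeds $1/(\tau+\varepsilon)$, which is exactly the quantity controlled by Ekeland's principle, and this is the pivot of the whole argument.

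The implication $(i)\Rightarrow(ii)$ I would treat directly, keeping the same neighborhood $\mathcal V\times\mathcal W$ and choosing $\gamma>0$ arbitrary. Fix $(x,p)\in\mathcal V\times\mathcal W$ with $f(x,p)\in(0,\gamma)$ and $\varepsilon>0$. Since $(i)$ guarantees $S(p)\neq\emptyset$ and $d(x,S(p))\le\tau f(x,p)$ via $(\ref{ine err})$, and since $f(x,p)>0$ forces $x\notin S(p)$, I would pick $z\in S(p)$ with $d(x,z)<d(x,S(p))+\varepsilon f(x,p)$. Then $z\neq x$ gives $d(x,z)>0$, while $[f(z,p)]_+=0$ together with $d(x,S(p))\le\tau f(x,p)$ yields $d(x,z)<(\tau+\varepsilon)f(x,p)=(\tau+\varepsilon)(f(x,p)-[f(z,p)]_+)$, which is precisely $(\ref{Ine charac})$.

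The substantial implication is $(ii)\Rightarrow(i)$. Starting from the data $\mathcal V=B(\bar x,\rho)$, $\mathcal W$ and $\gamma$ furnished by $(ii)$, I would fix a radius $r$ and a threshold $\gamma'\le\gamma$, shrink $\mathcal W$ to $\mathcal W'$, and set $\mathcal V'=B(\bar x,r)$. The heart is to show, for $(x,p)\in\mathcal V'\times\mathcal W'$ with $f(x,p)\in(0,\gamma')$, that $d(x,S(p))\le\tau f(x,p)$, arguing by contradiction. If $d(x,S(p))>\tau f(x,p)$, I would choose $\varepsilon>0$ small enough that $(\tau+\varepsilon)f(x,p)<d(x,S(p))$ and apply Ekeland's principle to the nonnegative l.s.c. function $g_p$ with radius $\lambda=(\tau+\varepsilon)f(x,p)$, obtaining $z$ with $g_p(z)\le g_p(x)$, $d(x,z)\le\lambda$, and $g_p(w)>g_p(z)-\tfrac{1}{\tau+\varepsilon}d(z,w)$ for all $w\neq z$ (the constant $\tfrac{1}{\tau+\varepsilon}$ arising since the excess over $\inf g_p$ is at most $f(x,p)$). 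Because $d(x,z)\le\lambda<d(x,S(p))$, the point $z$ is not a solution, so $f(z,p)\in(0,g_p(x))\subseteq(0,\gamma)$, and the bound $d(x,z)\le(\tau+\varepsilon)\gamma'$ keeps $z$ inside $\mathcal V$ once $r$ and $\gamma'$ are small. Feeding $z$ into $(ii)$ with the same $\varepsilon$ produces $w$ with $d(z,w)<(\tau+\varepsilon)(g_p(z)-g_p(w))$, i.e.\ $g_p(w)<g_p(z)-\tfrac{1}{\tau+\varepsilon}d(z,w)$, directly contradicting the Ekeland inequality.

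The main obstacle is the bookkeeping of neighborhoods, together with the case $f(x,p)\ge\gamma'$, where $(ii)$ offers no descent. I would dispatch nonemptiness and this case together: by the upper semicontinuity $(b)$ and $f(\bar x,\bar p)\le 0$, shrink $\mathcal W'$ so that $[f(\bar x,p)]_+<\gamma'-r/\tau$; applying the error bound already proved at $x=\bar x$ (or using $\bar x\in S(p)$ when $f(\bar x,p)\le 0$) yields $x^\ast\in S(p)$ with $d(\bar x,x^\ast)<\tau\gamma'-r$, which lies in $\mathcal V'$ as soon as $\tfrac{\tau\gamma'}{2}<r<\tau\gamma'$, giving $\mathcal V'\cap S(p)\neq\emptyset$. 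For $x\in\mathcal V'$ with $f(x,p)\ge\gamma'$ one then estimates $d(x,S(p))\le d(x,\bar x)+d(\bar x,x^\ast)<r+(\tau\gamma'-r)=\tau\gamma'\le\tau f(x,p)$, which closes condition $(i)$. The delicate point throughout is to fix $r$, $\gamma'$ and $\mathcal W'$ in the right order so that the Ekeland point never escapes $\mathcal V$ and all the threshold estimates close up simultaneously.
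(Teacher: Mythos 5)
The paper itself gives no proof of this theorem: it is recalled verbatim from \cite{NTT} (Theorem 2 there), so there is no internal argument to compare against. Your proof is correct and follows exactly the route one expects from that source and the surrounding error-bound literature: the direction $(i)\Rightarrow(ii)$ by selecting a near-best point of $S(p)$, and $(ii)\Rightarrow(i)$ by contradiction via Ekeland's variational principle applied to $[f(\cdot,p)]_+$ with ratio $\delta/\lambda=1/(\tau+\varepsilon)$, followed by the neighborhood bookkeeping that secures $\mathcal{V}\cap S(p)\neq\emptyset$ and disposes of the regime $f(x,p)\ge\gamma'$. The one point you should make explicit is that hypothesis (c) gives lower semicontinuity of $f(\cdot,p)$ only \emph{near} $\bar x$, so Ekeland's principle must be invoked on a closed ball $\bar{B}(\bar x,\rho_0)$ contained in that neighbourhood (complete as a closed subset of $X$), and one must verify that both the Ekeland point $z$ and the point $w$ supplied by (ii) remain in this ball --- which your estimates $d(x,z)\le(\tau+\varepsilon)f(x,p)$ and $d(z,w)<(\tau+\varepsilon)f(z,p)$, together with the smallness of $r$ and $\gamma'$, already deliver.
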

Given metric spaces $X, Y$ and  a topological space $P$, we  next consider the implicit   multifunction 
$:X\times P\rightrightarrows Y$  defined by 
\begin{equation}\label{Imp Multi}
S(y,p):=\{x\in X:\; \; y\in F(x,p)\}.
\end{equation}
Similarly to (\ref{elie}), we use   the lower semicontinuous envelope
$(x,y)\mapsto \varphi_p(x,y)$  of the function $(x,y)\mapsto d(y,
F(x,p))$ for each $p\in P,$ i.e., for $(x,y)\in X\times Y,$
\begin{equation}\label{elie@}
\varphi_p(x,y):=\liminf_{(u,v)\to (x,y)}d(v,F(u,p))=\liminf_{u\to x}d(y,F(u,p)).\end{equation}
From now on, we  will also  use the notation $F_p$ for $F(\cdot,p)$ and $\varphi_p$ for  $\varphi_{F_p}$ and the metric   defined on the cartesian product $X\times Y$  is given by:
$$d((x,y),(u,v))=\max\{d(x,u),d(y,v)\}, \quad (x,y),(u,v)\in X\times Y.$$

The next lemma is useful.
\begin{lemma}  We suppose that the set-valued mapping $x \rightrightarrows
F(x,p)$ is  a closed multifunction (i.e., its graph is closed) for any $p$ near $\bar{p}$. Then,
for  each $y\in Y,$
and each $p$ near $\bar{p},$
$$S(y,p)=\{x\in X:\; \varphi_p(x,y)=0\}.$$
\end{lemma}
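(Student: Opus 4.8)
The plan is to establish the set equality by proving two inclusions separately, observing that only one of them relies on the closedness hypothesis.

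For the inclusion $S(y,p)\subseteq\{x\in X:\varphi_p(x,y)=0\}$ I would argue directly. Fix $x\in S(y,p)$, so that $y\in F(x,p)$ and hence $d(y,F(x,p))=0$. Since the lower semicontinuous envelope satisfies $\varphi_p(x,y)=\liminf_{u\to x}d(y,F(u,p))\le d(y,F(x,p))$ (take the constant choice $u=x$ inside the infimum defining the envelope), and since $d(y,F(u,p))\ge 0$ for every $u$, I obtain $0\le\varphi_p(x,y)\le 0$, whence $\varphi_p(x,y)=0$. This inclusion uses no closedness assumption.

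For the reverse inclusion, fix $x$ with $\varphi_p(x,y)=0$; the goal is $y\in F(x,p)$. From $\liminf_{u\to x}d(y,F(u,p))=0$ and the nonnegativity of the distance, there is a sequence $u_n\to x$ with $d(y,F(u_n,p))\to 0$. In particular $F(u_n,p)\ne\emptyset$ for $n$ large (otherwise the distance would be $+\infty$), so for each such $n$ I may select $w_n\in F(u_n,p)$ with $d(y,w_n)<d(y,F(u_n,p))+1/n$, and consequently $w_n\to y$. Then $(u_n,w_n)\in\gph F(\cdot,p)$ with $(u_n,w_n)\to(x,y)$. Invoking the closedness of $\gph F(\cdot,p)$ for $p$ near $\bar{p}$, the limit $(x,y)$ lies in $\gph F(\cdot,p)$, that is $y\in F(x,p)$, so $x\in S(y,p)$.

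The only substantive point — and the single place where the hypothesis enters — is the limiting argument in the second inclusion: one extracts points $w_n$ nearly realizing the vanishing distances $d(y,F(u_n,p))$ and then passes to the limit using the closed-graph property to conclude $y\in F(x,p)$. I do not expect any genuine obstacle here, since the closedness is tailored precisely for this step; the first inclusion is immediate from the elementary bound $\varphi_p(x,y)\le d(y,F(x,p))$.
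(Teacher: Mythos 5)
Your proof is correct; the paper actually states this lemma without proof, but your argument is exactly the one the authors use for the analogous Lemma on the epigraphical multifunction $\mathcal{E}_{(F,G)}$ (forward inclusion from $\varphi_p(x,y)\le d(y,F(x,p))$, reverse inclusion by extracting $w_n\in F(u_n,p)$ with $w_n\to y$ and invoking graph-closedness). Your explicit handling of the case $F(u_n,p)=\emptyset$ is a small point the paper glosses over, but otherwise the two arguments coincide.
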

\begin{theorem}\label{Char2}
Let $X$ be a complete metric space and $Y$ be a metric space. Let
 $P$ be a topological space and suppose that the set-valued mapping
$F:X\times P\rightrightarrows Y$ satisfies the following conditions
for some $(\bar{x},\bar{y},\bar{p})\in X\times Y\times P:$

\item{(a)} $\bar {x}\in S(\bar{y},\bar{p});$

\item{(b)} the multifunction $p \rightrightarrows F(\bar {x},p)$ is  lower
semicontinuous at $\bar{p};$

\item{(c)} for any $p$ near $\bar{p},$ the set-valued mapping $x \rightrightarrows
F(x,p)$ is a closed multifunction (i.e., its graph is closed).
\vskip 0.2cm  Let  $\tau\in (0,+\infty),$ be fixed. Then one
has the following implications: $(i)\Leftrightarrow(ii)\Leftrightarrow
(iii)\Leftarrow(iv).$ Moreover, all the assertions are
equivalent provided that $Y$ is a normed space.

\item{(i)}  There exists a neighborhhood $\mathcal{U}\times \mathcal{V}\times \mathcal{W}\subseteq X\times  P\times Y$ of  $(\bar{x},\bar{y},\bar p)$ such
that $\mathcal{V}\cap S(y,p)\not=\emptyset$ for any $(y,p)\in \mathcal{V}\times \mathcal{W}$ and
$$d(x,S(y,p))\leq\tau d(y,F(x,p))\quad\mbox{for all}\; (x,y,p)\in\mathcal{U}\times \mathcal{V}\times \mathcal{W};$$

\item{(ii)}  There exists a neighborhhood $\mathcal{U}\times \mathcal{V}\times \mathcal{W}\subseteq X\times  P\times Y$ of  $(\bar{x},\bar{y},\bar p)$ such
that $ \mathcal{V}\cap S(y,p)\not=\emptyset$ for any $(y,p)\in  \mathcal{V}\times \mathcal{W}$ and
$$d(x,S(y,p))\leq\tau \varphi_p(x,y)\quad\mbox{for all}\; (x,y,p)\in \mathcal{U}\times \mathcal{V}\times \mathcal{W};$$
\item{(iii)} There exist  a neighborhood $\mathcal{U}\times \mathcal{V}\times \mathcal{W}\subseteq X\times Y\times P$ of  $(\bar{x},\bar y,\bar{p})$ and
a real $\gamma\in(0,+\infty)$ such that for any  $(x,y,p)\in \mathcal{U}\times \mathcal{V}\times \mathcal{W}$ with $y\notin F(x,p) $ and any $\varepsilon>0$, and any sequence $\{x_n\}_{n\in\N}\subseteq X$ converging to
$x$ with
$$\lim_{n\to\infty} d(y,F(x_n,p))=\liminf_{u\to x}d(y,F(u,p))=\varphi_p(x,p),$$ there exists  a sequence $\{u_n\}_{n\in \N}\subseteq X$ with
$\liminf_{n\to\infty} d(u_n,x)>0$ such that \begin{equation}\label{Inchar2}
\limsup_{n\to\infty}\frac{d(y,F(x_n,p))-d(y,F(u_n,p))}{d(x_n,u_n)}>\frac{1}{\tau+\varepsilon};
\end{equation}

\item{(iv)} There exist  a neighborhood $\mathcal{U}\times \mathcal{V}\times \mathcal{W}\subseteq X\times P\times Y$ of  $(\bar{x},\bar{p},\bar{y})$ and a real $\gamma>0$ such that 
\begin{equation}\label{Cond Slope}
|\nabla\varphi_{p}(\cdot,y)|(x)\geq \frac{1}{\tau}\quad\mbox{for all}\; (x,y,p)\in
\mathcal{U}\times \mathcal{V}\times \mathcal{W}\quad\mbox{with}\; \varphi_p(x,y)\in (0,\gamma).
\end{equation}
\end{theorem}
 \textit{Proof.} The implications $(ii)\Rightarrow(i)$ and $(iv)\Rightarrow(iii)$ are obvious.
 For $(i)\Rightarrow (iii),$ let $\mathcal{U}\times\mathcal{V}\times \mathcal{W}$ be an  open neighborhood
 of $(\bar{x},\bar{y},\bar p)$ such that $\gph F(\cdot,p)$ is closed
 for $p\in \mathcal{W}$ and  
  $$d(x,S(y,p))\leq \tau d(y, F(x,p))\;\;\forall (x,y,p)\in \mathcal{U}\times\mathcal{V}\times \mathcal{W}.$$
Let $(x,y,p)\in \mathcal{U}\times\mathcal{V}\times \mathcal{W}, \  y\notin F(x,p)$  and $\varepsilon>0.$ Let $\{x_n\}_{n\in\N}$
be a sequence converging to $x.$ When $n$ is sufficiently large, say
$n\geq n_0,$ then $x_n\in \mathcal{U}$ as well as $ y\notin F(x_n,p).$
Hence $d(x_n,S(y,p))\leq \tau d( y, F(x_n,p)).$ For each
$n\geq n_0,$ pick $u_n\in S(y,p)$ such that
$d(x_n,u_n)<(1+\varepsilon/2\tau)d(x_n,S(y,p)).$ 
We claim that $\liminf_{n\to\infty} d(u_n,x)>0$. 
Otherwise, there would exist some subsequence, $\{u_{n_k}\}_{k\in \N}$ converging to $x$ such that 
$u_{n_k}\in S(y, p), i.e.,  y \in F(u_{n_k},p)$.
Then, since $ F(\cdot,p)$  is graph-closed this would imply 
 $y \in F(x,p)$, a contradiction.
 Moreover for all $n\geq n_0,$
$$d(x_n,u_n)<(1+\varepsilon/2\tau)d(x_n,S( y,p))\leq (\tau+\varepsilon/2)[d(y,F(x_n,p))-d(y, F(u_n,p))].$$
This shows that (\ref{Inchar2}) holds. 
\vskip 0.5mm

 For $(iii)\Rightarrow (ii).$ Since the multifunction $p\rightrightarrows
F(\bar x,p)$ is assumed to be lower semicontinuous at $\bar p,$ then
the function $(p,y)\mapsto d(y,F(\bar x,p))$ is upper
semicontinuous at $(\bar p,\bar y)$ (see, e.g.,  in \cite[Cor. 20 ]{AuE}).  Therefore,
$$ \limsup_{(p,y)\to(\bar{p},\bar y)}\varphi_p(\bar x, y)\le\limsup_{(p,y)\to(\bar{p},\bar y)}d( y, F(\bar x,p)) \le d(\bar y, F(\bar x,\bar p))=\varphi_{\bar p}(\bar x,\bar y).$$
That is,  the function
$(p,y)\mapsto\varphi_p(\bar x, y)$ is upper semicontinuous at $(\bar
p,\bar y),$  and therefore, by virtue of Theorem \ref{Err Para sys}, it suffices
to observe  that statement  (ii)  of Theorem \ref{Err Para sys} is
verified. Indeed, let $(x,y,p)\in \mathcal{U}\times \mathcal{V}\times\mathcal{W}$ with $y\notin F(x,p) $ and
$\varphi_p(x,y)<\gamma$ and let $\varepsilon\in(0,1)$ be given. Let
$\{x_n\}_{n\in\N}$ be a sequence converging to $x$ with
$$\lim_{n\to\infty}d(y, F(x_n,p))=\varphi_p(x,y)=\liminf_{u\to x}d(y,F(u,p)).$$
Then, $x_n\notin F^{-1}_p(y)$, i.e., $y\notin F(x_n,p)$ when $n$ is sufficiently large, say $n\geq n_0$.
By (iii), we consider  a sequence $\{u_n\}_{n\in\N}$ with
$\liminf_{n\to\infty}d(u_n,x)>0$ such that
$$\limsup_{n\to\infty}\frac{d(y,F(x_n,p))-d(y,F(u_n,p))}{d(x_n,u_n)}>\frac{1}{\tau+\varepsilon}.$$
Pick $\delta\in(0,\liminf_{n\to\infty}d(u_n,x))$. Then, take  an index $n_1\geq n_0$ such that for all $n\geq n_1$, we have
$$d(x_n,u_n)\geq \delta; d(x_n,x)<\varepsilon\delta; d( y,F(x_n,p)<\varphi_p(x,y)+\frac{\varepsilon}{\tau+\varepsilon}d(x_n,u_n)$$
and
$$d(x_n,u_n)<(\tau+\varepsilon)(d(y,F(x_n,p))-d(y,F(u_n,p))).$$
Hence, 
$$d(x_n,u_n)<(1-\varepsilon)^{-1}(\tau+\varepsilon)(\varphi_p(x,y)-\varphi_p(u_n,y)).$$
It follows that for all $n\geq n_1$,
\begin{align*}
d(x,u_n)\le (1+\varepsilon)d(x_n,u_n)\\
&<(1-\varepsilon)^{-1}(\tau+\varepsilon)(1+\varepsilon)(\varphi_p(x,y)-\varphi_p(u_n,y))\\
&<(\tau+\varepsilon)(\varphi_p(x,y)-\varphi_p(u_n,y))
\end{align*}
and statement (ii) of Theorem \ref{Err Para sys} follows directly. So, the implication  $(iii)\Rightarrow (ii)$ is now proved.
\vskip 0.5cm
When $Y$ is normed space, $(i) \Rightarrow (iv)$ follows from the converse part of  \cite[Theorem 5]{NTT}) by noting that $S(y,p)=F_p^{-1}(y)$. So, we have that all assertions are equivalent when $Y$ to be normed space.\\
The proof is complete. \qed

Given two  multifunctions  
$F, G:X\rightrightarrows Y$,  ($Y$ is a normed linear space) we define a new multifunction  $\mathcal{E}_{(F,G)}: X\times Y\rightrightarrows Y $   
 by setting
\[
\mathcal{E}_{(F,G)}(x,k)=\left\{
\begin{array}
[c]{ll}%
F(x)+k, & \text{if $k\in G(x),$}\\
\emptyset, & \text{otherwise.}%
\end{array}
\right.
\]
When one of the multifunctions is a cone,  $\mathcal{E}_{(F,G)}$was called \textit{epigraphical} by Durea and Strugariu \cite{Dustru}.

For given
$y\in Y,$ we set
\begin{equation}\label{Imp Multi@}
\S_{\mathcal{E}_{(F,G)}}( y):=\{(x,k)\in X\times Y:\; \;  y\in \mathcal{E}_{(F,G)}(x,k)\}.
\end{equation}
The lower semicontinuous envelope
$((x,k),y)\mapsto \varphi_{\mathcal{E}}((x,k),y)$  of the distance function $d(y, \mathcal{E}_{(F,G)}(x,k))$ is defined for $(x,k,y)\in X\times Y\times Y$ by
\begin{align*}
&\varphi_{\mathcal{E}}((x,k),y):=\liminf_{(u,v,w)\to (x,k,y)}d(w,\mathcal{E}_{(F,G)}(u,v)).
 \end{align*}
Let us recall that a multifunction $G:X\rightrightarrows Y$ is lower semicontinuous at $(x,y)\in\gph G$,  if for any sequence $\{x_n\}_{n\in N}$ converging to $x$,  we can provide  a sequence $\{y_n\}_{n\in\N}$ converging to $y,$ with $y_n\in G(x_n).$ 
\begin{lemma}\label{tron}

If   $G$ has closed graph then 
\begin{align*}
\varphi_{\mathcal{E}}((x,k),y)
=\left\{
\begin{array}
[c]{ll}%
\liminf\limits_{\gph G\owns(u,v)\to (x,k)}d(y,F(u)+v),  & \text{if $k\in G(x)$}\\
+\infty,  & \text{otherwise.}%
\end{array}
\right.\\
 \end{align*}
 Moreover,  if in addition,   $G$  is lower semicontinuous at $(x,k)\in \gph G$,  then the following representation holds:
\begin{align*}
\varphi_{\mathcal{E}}((x,k),y)
=\left\{
\begin{array}
[c]{ll}%
\liminf\limits_{u\rightarrow x}d(y,F(u)+k),  & \text{if $k\in G(x)$}\\
+\infty,  & \text{otherwise.}%
\end{array}
\right.\\
 \end{align*}
\end{lemma}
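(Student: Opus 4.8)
The plan is to compute the liminf defining $\varphi_{\mathcal{E}}((x,k),y)$ directly, splitting according to whether $(u,v)$ lies in $\gph G$ or not, and to exploit two elementary facts about the distance function in the normed space $Y$: it is $1$-Lipschitz in its first argument, $|d(w,A)-d(y,A)|\le\|w-y\|$, and it is translation-compatible, $|d(y,A+h)-d(y,A)|\le\|h\|$ for every nonempty $A\subseteq Y$ and every $h\in Y$ (both follow at once from the triangle inequality); the convention $d(\cdot,\emptyset)=+\infty$ is used throughout. First I would dispose of the case $k\notin G(x)$. Since $\gph G$ is closed and $(x,k)\notin\gph G$, there is a neighborhood of $(x,k)$ disjoint from $\gph G$; hence for all $(u,v)$ close enough to $(x,k)$ we have $v\notin G(u)$, so $\mathcal{E}_{(F,G)}(u,v)=\emptyset$ and $d(w,\mathcal{E}_{(F,G)}(u,v))=+\infty$. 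Consequently the liminf over $(u,v,w)\to(x,k,y)$ is $+\infty$, which gives the second branch of both formulas.

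Now assume $k\in G(x)$. Here the terms with $(u,v)\notin\gph G$ equal $+\infty$ and therefore never lower the infimum over any ball around $(x,k,y)$, so they may be discarded; only the points $(u,v)\in\gph G$, for which $\mathcal{E}_{(F,G)}(u,v)=F(u)+v$, are relevant. To eliminate the auxiliary variable $w$ I would invoke the $1$-Lipschitz property: for $\|w-y\|<\delta$ one has $|d(w,F(u)+v)-d(y,F(u)+v)|<\delta$, so the infimum of $d(w,F(u)+v)$ over the $\delta$-ball differs by at most $\delta$ from the infimum of $d(y,F(u)+v)$ over $\{(u,v)\in\gph G: d((u,v),(x,k))<\delta\}$. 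Letting $\delta\downarrow0$, and using that $(x,k)\in\gph G$ so that admissible points exist in every ball, a squeezing argument yields exactly $\varphi_{\mathcal{E}}((x,k),y)=\liminf_{\gph G\owns(u,v)\to(x,k)}d(y,F(u)+v)$, which is the first formula.

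For the refined formula under lower semicontinuity of $G$ at $(x,k)$ I would prove two inequalities between $L_1:=\liminf_{\gph G\owns(u,v)\to(x,k)}d(y,F(u)+v)$ and $L_2:=\liminf_{u\to x}d(y,F(u)+k)$. For $L_1\le L_2$: given $u_n\to x$ realizing $L_2$, lower semicontinuity produces $v_n\in G(u_n)$ with $v_n\to k$; since the translation estimate gives $|d(y,F(u_n)+v_n)-d(y,F(u_n)+k)|\le\|v_n-k\|\to0$, the admissible sequence $(u_n,v_n)\to(x,k)$ in $\gph G$ furnishes values tending to $L_2$, whence $L_1\le L_2$. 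For $L_1\ge L_2$: along any sequence $(u_n,v_n)\in\gph G$ with $(u_n,v_n)\to(x,k)$ realizing $L_1$, the same estimate together with $v_n\to k$ gives $\liminf_n d(y,F(u_n)+v_n)=\liminf_n d(y,F(u_n)+k)\ge L_2$, so $L_1\ge L_2$. Combining the two, $L_1=L_2$, which is the second formula.

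I expect the only delicate point to be the bookkeeping of the liminf when the $+\infty$-terms are discarded and $w$ is absorbed, namely verifying that the infimum over the punctured product ball is controlled from both sides by the infimum over $\gph G$ with $w$ replaced by $y$; this is a matter of care rather than conceptual difficulty, since the $1$-Lipschitz and translation-invariance estimates do all the substantive work.
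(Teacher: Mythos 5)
Your proof is correct and follows essentially the same route as the paper: the closed-graph case, the reduction of the triple liminf to the liminf over $\gph G$ with $w$ replaced by $y$ (which the paper merely asserts and you justify via the $1$-Lipschitz estimate), and the two-inequality argument for the Claim using $|d(y,F(u)+v)-d(y,F(u)+k)|\le\|v-k\|$ together with lower semicontinuity are all exactly the paper's steps. No gaps.
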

\begin{proof}
For the first equality, if $k\notin G(x)$,  since $G$ has closed graph one has $\varphi_{\mathcal{E}}((x,k),y)=\infty$. Otherwise, we have 
\begin{align*}
\varphi_{\mathcal{E}}((x,k),y 
&=\liminf_{(u,v,w)\to (x,k,y)}d(w,\mathcal{E}_{(F,G)}(u,v))\\
&=\liminf_{\gph G\owns(u,v)\to (x,k)}d(y,F(u)+v).
 \end{align*}
\textbf{Claim} Let $G:X\rightrightarrows Y$ be lower semicontinuoous at $(x,k)\in \gph G$. Then for each $y\in Y $we have 
$$\liminf_{\gph G\owns(u,v)\to (x,k)}d(y,F(u)+v)=\liminf_{u\to x}d(y,F(u)+k).$$
For simplicity set $A:=\;\liminf_{\gph G\owns(u,v)\to (x,k)}d(y,F(u)+v)$ and $B:= \;\liminf_{u\to x}d(y,F(u)+k).$
First let us prove that $A\geq B.$ Indeed, let $\{(u_n, v_n)\}_{n\in \N}$ be a sequence in $\gph G$ such that $(u_n,v_n) \to (x,k)$ as $n\to +\infty$ and $\lim_{n\to +\infty} d(y, F(u_n)+ v_n) = A.$ Then, 
\begin{align*}
B\leq\liminf_{n\to +\infty}d(y, F(u_n) +k)&\leq \liminf_{n\to +\infty}[d(y, F(u_n) + v_n)+\Vert v_n-k\Vert]\\
&=\lim_{n\to +\infty} d(y,F(u_n)+v_n)= A.
\end{align*}
On the other hand, to prove that $A\leq B$, pick any sequence $\{u_n\}_{n\in\N}$ converging to $x$ such that $\lim_{n\to +\infty} d(y, F(u_n) + k) = B.$  As $G$ is lower semicontinuous at $(x,k)$, we find a sequence $\{v_n\}_{n\in\N}$ converging to $k$ such that $(u_n,v_n) \in \gph G$ for each $n\in\N.$
Hence, 
\begin{align*}
A&\leq \liminf_{n\to +\infty} d(y, F(u_n) +v_n)\\
&\leq \liminf_{n\to +\infty}[d(y, F(u_n)+k +\Vert k-v_n\Vert ]\\
&\leq \lim_{n\to +\infty} d(y, F(u_n) + k) = B.
\end{align*}
The claim is proved. From the claim, the fact that $ \varphi_{\mathcal{E}}((x,k),y)=\liminf_{u\to x}d(y,F(u)+k)$ follows immediately. \qed
 \end{proof}
\begin{remark}
\begin{enumerate}
\item Since we suppose that $G$ is both graph-closed and lower semicontinuous, it is continuous in finite dimension (see, \cite[Theorem 5.7 page 158]{Roc-Wet}).
\item
The lower semicontinuity of $G$ is necessary to obtain the last formula in Lemma \ref{tron} as shows the next example\footnote{We  would like to thank one of the referees for pointing us  this example.}: take  $F, G : [0,1 ]  \rightrightarrows  \R$ be defined by  $F(0)= \{0\}, F(x)=1 \; \text{if}\; x\in (0,1]$ and $G(0) =\{0,1\}, G(x) = \{0\},  \; \text{if}\; x\in(0,1].$ Note that $G$ has a closed graph but is not lower semicontinuous at $(0, 1)\in \gph G$ and remark that $$\liminf_{u\to 0} d(3, F(u)+1)= 1 \; \text{while} \;  \varphi_{\mathcal{E}}((0,3),1)=  \liminf_{(u,v,w)\to (0,1,3)}d(w,\mathcal{E}_{(F,G)}(u,v))=2.$$
\end{enumerate}
\end{remark}
The next lemma is useful.
\begin{lemma}\label{dame}  Assume that $F: X\rightrightarrows Y$ and $G: X\rightrightarrows Y$ be closed multifunctions.  Then, the epigraphical multifunction $\mathcal{E}_{(F,G)}$ has a closed graph, and 
for each $y\in Y,$
\begin{equation}{\label{lemma}}
\S_{\mathcal{E}_{(F,G)}}( y)=\{(x,k)\in X\times Y:\quad \varphi_{\mathcal{E}}((x,k),y)=0\}=\{(x,k)\in X\times Y:\quad k\in G(x), y\in F(x)+k\}.
\end{equation}
\end{lemma}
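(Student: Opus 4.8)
The plan is to first establish that $\gph \mathcal{E}_{(F,G)}$ is closed, and then to read off the two equalities with almost no further work. The rightmost equality in (\ref{lemma}) is immediate from the defining relation (\ref{Imp Multi@}): a pair $(x,k)$ lies in $\S_{\mathcal{E}_{(F,G)}}(y)$ exactly when $y\in\mathcal{E}_{(F,G)}(x,k)$, and by the construction of $\mathcal{E}_{(F,G)}$ this means precisely that $k\in G(x)$ and $y\in F(x)+k$. The middle equality is then the specialization to $\mathcal{E}_{(F,G)}$ of the general principle recorded in the unnumbered lemma earlier in this section, namely that a closed multifunction $T$ satisfies $T^{-1}(w)=\{z:\varphi_T(z,w)=0\}$; here $T=\mathcal{E}_{(F,G)}:X\times Y\rightrightarrows Y$, the variable $z$ is the pair $(x,k)$, and $\varphi_T=\varphi_{\mathcal{E}}$. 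Thus everything reduces to the closedness of $\gph \mathcal{E}_{(F,G)}$.

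To prove closedness, the key is the reformulation
\[
\gph \mathcal{E}_{(F,G)} = \{((x,k),y)\in (X\times Y)\times Y : (x,k)\in\gph G \ \text{and}\ (x,y-k)\in\gph F\},
\]
which follows at once from the equivalence $y\in\mathcal{E}_{(F,G)}(x,k)\iff (k\in G(x)\ \text{and}\ y-k\in F(x))$. I would then take a sequence $((x_n,k_n),y_n)$ in $\gph\mathcal{E}_{(F,G)}$ converging to $((x,k),y)$. From $(x_n,k_n)\to(x,k)$ with $k_n\in G(x_n)$ and the closedness of $\gph G$ one obtains $k\in G(x)$. Since $Y$ is a normed linear space, the map $(k,y)\mapsto y-k$ is continuous, so $(x_n,y_n-k_n)\to(x,y-k)$; as $y_n-k_n\in F(x_n)$ and $\gph F$ is closed, one gets $y-k\in F(x)$. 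Hence $k\in G(x)$ and $y\in F(x)+k$, i.e.\ $y\in\mathcal{E}_{(F,G)}(x,k)$, which shows that $\gph\mathcal{E}_{(F,G)}$ is closed.

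There is no genuine obstacle in this lemma; it is essentially bookkeeping. The only point demanding care is that the translation amount $k$ is simultaneously one of the two arguments of $\mathcal{E}_{(F,G)}$ and is itself converging, so the two closedness hypotheses must be applied to the correctly chosen sequences, invoking the continuity of subtraction in $Y$ to pass from $y_n-k_n$ to $y-k$. If one prefers not to quote the unnumbered lemma, the middle equality can be verified directly: $\varphi_{\mathcal{E}}((x,k),y)=0$ holds trivially when $y\in\mathcal{E}_{(F,G)}(x,k)$ by taking the constant sequence in the defining $\liminf$, while conversely $\varphi_{\mathcal{E}}((x,k),y)=0$ produces a sequence $(u_n,v_n,w_n)\to(x,k,y)$ together with points $z_n\in\mathcal{E}_{(F,G)}(u_n,v_n)$ satisfying $d(w_n,z_n)\to 0$, whence $z_n\to y$ and $((u_n,v_n),z_n)\in\gph\mathcal{E}_{(F,G)}$, so that the closedness just established forces $y\in\mathcal{E}_{(F,G)}(x,k)$.
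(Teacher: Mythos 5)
Your proof is correct and follows essentially the same route as the paper: both arguments come down to extracting a convergent sequence from the $\liminf$ defining $\varphi_{\mathcal{E}}$, using the closedness of $\gph G$ to get $k\in G(x)$ and the closedness of $\gph F$ together with the continuity of $(k,y)\mapsto y-k$ to get $y-k\in F(x)$. The only organizational difference is that you factor the sequence argument through an explicit proof of the closedness of $\gph\mathcal{E}_{(F,G)}$ and then invoke it (or the unnumbered lemma) for the middle equality, whereas the paper merely asserts that closedness with an ``observe that'' and runs the same sequence extraction directly inside the proof of the equality; your version actually supplies the missing verification.
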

\begin{proof}
 Observe that,   if $F: X\rightrightarrows Y$ and $G: X\rightrightarrows Y$ are closed multifunctions, then so is the epigraphical multifunction $\mathcal{E}_{(F,G)}.$ 

Let us prove (\ref{lemma}).
Obviously, for each $y\in Y, $ if $(x,k)\in \S_{\mathcal{E}_{(F,G)}}(y)$,  then $\varphi_{\mathcal{E}}((x,k),y)=0.$ 
Conversely,  suppose that $\varphi_{\mathcal{E}}((x,k),y)=0.$  Then, $k\in G(x)$ and there exists a sequence $\{(x_n,k_n)\}\to (x,k), k_n\in G(x_n)$ such that $d(y,F(x_n)+k_n)\to 0.$ Then, there exists $z_n\in F(x_n)$ such that $z_n+k_n\rightarrow y$. It follows, $z_n\rightarrow y-k$.  Since $F$ is graph-closed, one has that $y-k\in F(x),$ i.e., $y\in F(x)+k.$  Hence,  $(x,k)\in \S_{\mathcal{E}_{(F,G)}}(y)$  establishing the proof.\qed 
\end{proof}
By virtue of  Lemma \ref{dame},  we adapt Theorem \ref{Char2} to the multifunction  ${\mathcal{E}_{(F,G)}}$.
\begin{lemma}\label{strophi}
Let $X$ be a complete metric
space, let $Y$ be a Banach space and let $F, G:X\rightrightarrows Y$ be  closed multifunctions. Suppose that  $(\bar{x},\bar{k},\bar{y})\in X\times Y\times Y$ such that $\bar{y}\in F(\bar{x})+\bar{k}, \bar{k}\in G(\bar{x})$.

 Let  $\tau\in ]0,+\infty[,$ be fixed. Then,  the following
statements are equivalent:
\item{(i)}  There exists a neighborhood \;$\mathcal{U}\times \mathcal{V}\times \mathcal{W}\subseteq X\times  Y\times Y$ of  $(\bar{x},\bar{k},\bar{y})$ such
that  \;$(\mathcal{U}\times\mathcal{V}) \cap \S_{\mathcal{E}_{(F,G)}}(y)\not=\emptyset$ for any $y\in \mathcal{W}$  and
$$d((x,k),\S_{\mathcal{E}_{(F,G)}}(y))\leq\tau \varphi_{\mathcal{E}}((x,k),y)\quad\mbox{for all}\quad (x,k,y)\in \mathcal{U}\times \mathcal{V}\times \mathcal{W};$$
\item{(ii)} There exist  a neighborhood \;$\mathcal{U}\times \mathcal{V}\times \mathcal{W}\subseteq X\times Y\times Y$ of  $(\bar{x},\bar{k},\bar{y})$ and
a real $\gamma\in]0,+\infty[$ such that,  for any  $(x,k,y)\in \mathcal{U}\times \mathcal{V}\times \mathcal{W}$ with $y\notin F(x)+k, k\in G(x) $ and
$\varphi_{\mathcal{E}}((x,k),y)<\gamma$,   any $\varepsilon>0$, and any sequences $\{x_n\}_{n\in\N}\subseteq X$ converging to
$x$, $\{k_n\}_{n\in\N}\subseteq Y$ converging to
$k$, $k_n\in G(x_n)$  with
$$ \lim_{n\to\infty} d(y-k_n,F(x_n))=
\liminf_{\gph G \owns (u,v)\to (x,k)}d(y-v,F(u)),$$ there exist  sequences $\{u_n\}_{n\in \N}\subseteq X, \  \{z_n\}_{n\in \N}\subseteq Y$  with $(u_n,z_n)\in\gph G$ and 
$ \liminf_{n\to\infty} d((u_n,z_n),(x,k))>0$ such that 
\begin{equation}\label{Inchar2}
\limsup_{n\to\infty}\frac{d(y-k_n,F(x_n))-d(y-z_n,F(u_n))}{d((x_n,u_n),(k_n,z_n))}>\frac{1}{\tau+\varepsilon};
\end{equation}
\item{(iii)} there exist a neighborhood \;$\mathcal{U}\times \mathcal{V}\times \mathcal{W}$  of
$(\bar{x},\bar{k},\bar{y})$ and    a real  $\gamma>0$  such that 
$$\vert\nabla\varphi_{\mathcal{E}}((\cdot,\cdot),y)\vert (x,k)\geq \frac{1}{\tau}\; {\rm  for \;all} \; (x,k,y)\in \mathcal{U}\times \mathcal{V}\times \mathcal{W} \;{\rm with}\; 
\varphi_{\mathcal{E}}((x,k),y)\in ]0,\gamma[.$$
\end{lemma}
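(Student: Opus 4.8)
The plan is to obtain the lemma as a specialization of Theorem \ref{Char2}. I would view $\mathcal{E}_{(F,G)}$ as a (trivially) parametrized multifunction whose state space is the product $X\times Y$ (with generic point $(x,k)$), whose target space is $Y$, and whose parameter space $P$ is reduced to a single point, so that all parameter dependence is vacuous. Under this reading the implicit solution map $S(y,p)$ of Theorem \ref{Char2} becomes $\S_{\mathcal{E}_{(F,G)}}(y)$ and the lower semicontinuous envelope $\varphi_p$ becomes $\varphi_{\mathcal{E}}$; the neighborhood $\mathcal{U}\times\mathcal{V}$ plays the role of the state neighborhood and $\mathcal{W}$ that of the target neighborhood.

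First I would verify the three standing hypotheses of Theorem \ref{Char2}. Hypothesis (a) holds since $\bar k\in G(\bar x)$ and $\bar y\in F(\bar x)+\bar k$ give $(\bar x,\bar k)\in\S_{\mathcal{E}_{(F,G)}}(\bar y)$ by Lemma \ref{dame}. Hypothesis (b), lower semicontinuity of the base-point map in the parameter, is vacuous because $P$ is a singleton. Hypothesis (c), closedness of $(x,k)\rightrightarrows\mathcal{E}_{(F,G)}(x,k)$, is precisely the first assertion of Lemma \ref{dame}, valid because $F$ and $G$ are closed. Since $X$ is complete and $Y$ is Banach, the product $X\times Y$ is a complete metric space and $Y$ is in particular normed, so the full chain $(ii)\Leftrightarrow(iii)\Leftrightarrow(iv)$ of Theorem \ref{Char2} is at our disposal.

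It then remains to transcribe these three statements into $(i)$, $(ii)$, $(iii)$ of the lemma. Statement (ii) of Theorem \ref{Char2} reads off verbatim as statement (i) here, and statement (iv) as statement (iii), the strong slope being taken in the product metric on $X\times Y$. For the sequential condition I would use the two elementary identities $d(y,\mathcal{E}_{(F,G)}(x,k))=d(y-k,F(x))$ whenever $k\in G(x)$ (translation invariance of the norm on $Y$) and $d(y,\mathcal{E}_{(F,G)}(x,k))=+\infty$ otherwise. These force every sequence realizing the liminf defining $\varphi_{\mathcal{E}}$ to lie in $\gph G$; together with the first formula of Lemma \ref{tron}, this turns the scalar sequence $\{x_n\}$ of Theorem \ref{Char2}(iii) into a pair $\{(x_n,k_n)\}\subseteq\gph G$ and the auxiliary sequence $\{u_n\}$ into a pair $\{(u_n,z_n)\}\subseteq\gph G$, while the difference quotient acquires numerator $d(y-k_n,F(x_n))-d(y-z_n,F(u_n))$ and denominator $d((x_n,k_n),(u_n,z_n))$, which is exactly statement (ii).

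The only delicate point, and where I expect the bookkeeping to require care, is the restriction to admissible sequences: because $\mathcal{E}_{(F,G)}$ is empty-valued off $\gph G$, the constraints $k_n\in G(x_n)$ and $(u_n,z_n)\in\gph G$ are not extra hypotheses but are forced by finiteness of the distances involved, and are exactly what Lemma \ref{tron} records. Once this identification is in place the equivalences transfer mechanically from Theorem \ref{Char2}, completing the argument.
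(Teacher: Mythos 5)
Your proposal is correct and follows essentially the same route as the paper, which itself offers no further argument beyond the remark that, by virtue of Lemma \ref{dame}, Theorem \ref{Char2} is adapted to the multifunction $\mathcal{E}_{(F,G)}$ viewed as acting from the complete metric space $X\times Y$ into the normed space $Y$ with trivial parameter dependence. Your verification of hypotheses (a)--(c), and your observation that membership of the realizing and auxiliary sequences in $\gph G$ is forced by finiteness of the distances (as recorded in Lemma \ref{tron}), supply exactly the bookkeeping the paper leaves implicit.
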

\begin{proposition} \label{Slope E} Let $X$ be a complete metric
space, $Y$ be a Banach space and let $F, G:X\rightrightarrows Y$ be  closed multifunctions. Suppose that  $(\bar{x},\bar{k},\bar{y})\in X\times Y\times Y$ be such that $\bar{y}\in F(\bar{x})+\bar{k}, \bar{k}\in G(\bar{x})$. Consider the following statements:
\item{(i)} there exist a neighborhood \;$\mathcal{U}\times \mathcal{V}\times \mathcal{W}$ of
$(\bar{x},\bar{k},\bar{y})$  and $\tau>0$ such that $$ d((x,k),\S_{\mathcal{E}_{(F,G)}}(y))\leq \tau\varphi_{\mathcal{E}}((x,k),y)\quad \mbox{for all}\quad (x,k,y)\in \mathcal{U}\times \mathcal{V}\times \mathcal{W}; $$
 \item{(ii)} there exist a neighborhood \;$\mathcal{U}\times \mathcal{V}\times \mathcal{W}$ of $(\bar{x},\bar{k},\bar{y})$  and $\tau>0$  such that
\begin{equation}\label{star@}
 d(x,(F+G)^{-1}(y))\leq \tau d(y,F(x)+G(x)\cap \mathcal{V})\quad \mbox{for all}\quad (x,y)\in\mathcal{U}\times \mathcal{W};
\end{equation}
\item{(iii)} there exist a neighborhood \;$\mathcal{U}\times \mathcal{V}\times \mathcal{W}$ of $(\bar{x},\bar{k},\bar{y}-\bar{k})$  and $\varepsilon,\tau>0$ such that, for every $(x,k,z)\in \mathcal{U}\times \mathcal{V}\times \mathcal{W},   k\in G(x), z\in F(x),$ and 
 $\rho\in ]0,\varepsilon[,$
$$B(k+z,\rho\tau^{-1})\subset  (F+G)(B(x,\rho)).$$
Then one has the following implications: $(i)\Rightarrow (ii)\Leftrightarrow (iii).$
\end{proposition}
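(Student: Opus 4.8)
The plan is to treat the three links separately, using throughout Lemma~\ref{dame} (which identifies $\S_{\mathcal{E}_{(F,G)}}(y)$ and guarantees that $\mathcal{E}_{(F,G)}$ is graph-closed) and the elementary majorization $\varphi_{\mathcal{E}}((x,k),y)\le d(y,F(x)+k)\le\|y-z-k\|$, valid for every $z\in F(x)$ whenever $k\in G(x)$ (obtained by feeding the constant sequence into the $\liminf$ defining $\varphi_{\mathcal{E}}$, cf. Lemma~\ref{tron}). I write $H:=F+G$ and record that $H^{-1}(y)$ is the $X$-projection of $\S_{\mathcal{E}_{(F,G)}}(y)$, and that $\bar z:=\bar y-\bar k\in F(\bar x)$, so $(\bar x,\bar k,\bar z)$ is a legitimate base point.

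For $(i)\Rightarrow(ii)$ I would keep the neighborhoods of (i) and prove (ii) with the same modulus. Fixing $(x,y)\in\mathcal U\times\mathcal W$ and any $z\in F(x)$, $k\in G(x)\cap\mathcal V$, we have $(x,k,y)\in\mathcal U\times\mathcal V\times\mathcal W$, so (i) gives $d((x,k),\S_{\mathcal{E}_{(F,G)}}(y))\le\tau\varphi_{\mathcal{E}}((x,k),y)\le\tau\|y-z-k\|$. Choosing $(x',k')\in\S_{\mathcal{E}_{(F,G)}}(y)$ that nearly realizes this distance, and using $d(x,x')\le d((x,k),(x',k'))$ together with $x'\in H^{-1}(y)$, we obtain $d(x,H^{-1}(y))\le\tau\|y-z-k\|+\text{(arbitrarily small)}$. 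Taking the infimum over admissible $z,k$ yields $d(x,H^{-1}(y))\le\tau\,d(y,F(x)+G(x)\cap\mathcal V)$, i.e. (ii).

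For $(ii)\Rightarrow(iii)$ I would run the standard ``metric regularity implies covering'' argument. Given a base point $(x,k,z)$ near $(\bar x,\bar k,\bar z)$ with $k\in G(x)$, $z\in F(x)$, and $\rho\in\,]0,\varepsilon[$, take any $y'\in B(k+z,\rho\tau^{-1})$. Since $k+z\in F(x)+G(x)\cap\mathcal V$, we get $d(y',F(x)+G(x)\cap\mathcal V)\le\|y'-(k+z)\|<\rho\tau^{-1}$, whence (ii) gives $d(x,H^{-1}(y'))<\rho$; so there is $x'\in B(x,\rho)$ with $y'\in H(x')$, i.e. $y'\in H(B(x,\rho))$. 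Choosing $\varepsilon$ small enough that every such $y'$ remains in $\mathcal W$ makes the appeal to (ii) legitimate, and (iii) follows with the same $\tau$.

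The delicate direction is $(iii)\Rightarrow(ii)$, and the main obstacle is that the restricted distance $r:=d(y,F(x)+G(x)\cap\mathcal V)$ is an infimum over all $z\in F(x)$, whereas the covering property only activates at base points whose $z$-component is close to $\bar z$. I would first shrink the neighborhoods coming from (iii) so that the radii of the $\mathcal V$- and $z$-neighborhoods, together with the radius of the $y$-neighborhood, sum to less than $\varepsilon\tau^{-1}$. The key is a dichotomy. If some near-optimal pair $(z,k)$ realizing $r$ has $z$ close to $\bar z$, then $(x,k,z)$ is an admissible base point and one application of covering reaches $y$ within distance $\tau r$; and a near-optimal pair is forced to satisfy this as soon as $r$ is small, since $\|z-\bar z\|\le\|(z+k)-y\|+\|y-\bar y\|+\|\bar k-k\|$, whose last two terms are as small as we wish. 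The complementary case is therefore that $r$ is bounded below by a fixed constant $r_0>0$; there I would abandon $x$ as a base point and instead apply covering from the central base point $(\bar x,\bar k,\bar z)$: because $y$ is near $\bar y$, this already yields a point of $H^{-1}(y)$ within $\tau\|y-\bar y\|$ of $\bar x$, giving an absolute bound on $d(x,H^{-1}(y))$ that is $\le\tau' r$ once the modulus is enlarged to absorb the ratio $(c+\tau e)/r_0$, where $c,e$ denote the $x$- and $y$-neighborhood radii. Taking $\tau'$ to be the larger of the two moduli delivers (ii). I expect the bookkeeping of these radii --- ensuring simultaneously $0<r_0<\varepsilon\tau^{-1}$ and that $y$ is reachable from the centre --- to be the only genuinely fiddly part; the covering/regularity mechanism itself is routine.
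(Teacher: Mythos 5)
Your proposal is correct, and for $(i)\Rightarrow(ii)$ and $(ii)\Rightarrow(iii)$ it follows essentially the same route as the paper: project a near-optimal point of $\S_{\mathcal{E}_{(F,G)}}(y)$ onto $X$ and use $\varphi_{\mathcal{E}}((x,k),y)\le \|y-z-k\|$ for the first implication, and the standard ``regularity implies covering'' computation (with the same shrinking of the $k$- and $y$-radii so that the appeal to (ii) is legitimate) for the second. Where you genuinely diverge is in $(iii)\Rightarrow(ii)$: the paper's argument there is a one-liner that picks a near-optimal pair $(k,z)$ realizing $d(y,F(x)+G(x)\cap B(\bar k,\rho_2))$ and applies the covering property at the base point $(x,k,z)$, without verifying that $z$ lies in the prescribed neighborhood of $\bar y-\bar k$ or that the radius $\rho$ stays below $\varepsilon$ --- conditions that can fail when the restricted distance is not small. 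Your dichotomy (small distance: the estimate $\|z-\bar z\|\le\|z+k-y\|+\|y-\bar y\|+\|\bar k-k\|$ forces admissibility of the base point; large distance: cover from the central point $(\bar x,\bar k,\bar y-\bar k)$ and absorb the resulting absolute bound into an enlarged modulus) closes exactly this gap, at the harmless cost of a possibly larger $\tau$ in (ii). This is the same Case 1/Case 2 device the paper itself deploys in the proof of Proposition~\ref{me-sum}, so your version is arguably the more complete rendering of the intended argument.
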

\begin{proof}
 For $(i)\Rightarrow (ii).$ By (i), there exist $\delta_1,\delta_2,\delta_3>0$ such that,  for every $\varepsilon>0$ and for every  $(x,k,y)\in B(\bar{x},\delta_1)\times [B(\bar{k},\delta_2)\cap G(x)]\times B(\bar{y},\delta_3),$ there is $(u,z)\in X\times Y$ with $y\in F(u)+z, z\in G(u)$ such that  
$$ d((x,k),(u,z))< (1+\varepsilon)\tau\varphi_{\mathcal{E}}((x,k),y).$$ 
Consequently,
$$ d(x,u)\leq \max\{d(x,u), \|k-z\|\}< (1+\varepsilon)\tau d(y,F(x)+k).$$
Noting  that $y\in F(u)+ G(u),$ i.e., $u\in (F+G)^{-1}(y)$,  it follows that 
$$d(x,(F+G)^{-1}(y))<(1+\varepsilon)\tau d(y,F(x)+k).$$
In conclusion, we have that 
$$d(x,(F+G)^{-1}(y))< (1+\varepsilon)\tau d(y,F(x)+G(x)\cap B(\bar{k},\delta_2))\quad \mbox{for all}\quad (x,y)\in B(\bar{x},\delta_1)\times B(\bar{y},\delta_3).$$
Hence, taking the limit as $\varepsilon>0$ goes to $0$ yields the desired  conclusion. 

For $(ii)\Rightarrow (iii).$ 
Suppose that (ii) holds for the neighborhood $B(\bar{x},\delta_1)\times B(\bar{k},\delta_2)\times B(\bar{y},\delta_3)$ with $\delta_1,\delta_2,\delta_3>0$ and $\tau>0.$ Choose $\rho_1=\delta_1,\rho_2=1/4\min\{\delta_2,\delta_3\},\rho_3=1/4\delta_3, \varepsilon<\tau\delta_3/2.$ \\
Then, for  $(x,k,z)\in B(\bar{x},\rho_1)\times B(\bar{k},\rho_2)\times B(\bar{y}-\bar{k},\rho_3), k\in G(x), z\in F(x),$ we take $y\in B(k+z,\rho\tau^{-1}).$\\
Consequently, $$\Vert y-k-z\Vert<\rho\tau^{-1},$$
and 
$$
 \begin{array}{lll}
 \Vert y-\bar{y}\Vert
& \le \Vert y-k-z\Vert+\Vert k-\bar{k}\Vert+\Vert \bar{k}-\bar{y}+z\Vert,\\
&<\rho\tau^{-1}+\rho_2+\rho_3,\\
&<\varepsilon\tau^{-1}+\delta_3/4+\delta_3/4,\\
&<\delta_3/2+\delta_3/2=\delta_3.\\
\end{array}
$$
Therefore, we have that $$d(y,F(x)+G(x)\cap B(\bar{k},\delta_2))\le \Vert y-k-z\Vert<\rho\tau^{-1}.$$
Hence, $$d(x,(F+G)^{-1}(y))<\tau\rho\tau^{-1}=\rho.$$  Let $\gamma>0$ with $d(x,(F+G)^{-1}(y))+\gamma<\rho.$
Find $u\in (F+G)^{-1}(y)$, i.e., $y\in (F+G)(u)$ such that $$d(x,u)<d(x,(F+G)^{-1}(y))+\gamma.$$ Thus, $d(x,u)<\rho.$
It follows that $$y\in (F+G)(B(x,\rho)).$$
 For $(iii)\Rightarrow (ii).$ Suppose that (iii) holds for the neighborhood $B(\bar{x},\rho_1)\times B(\bar{k},\rho_2)\times B(\bar{y},\rho_3)$ with $\rho_1,\rho_2,\rho_3>0$ and $\tau>0,\varepsilon>0.$  \\
Take $\rho_1,\rho_3$ smaller  if neccesary and consider a positive real $\eta$ sufficiently small  so that the quantity $\rho:=\tau d(y,F(x)+G(x)\cap B(\bar{k},\rho_2))+\eta$ satisfies the conclusion of (iii) together with $y\in B(k+z,\rho\tau^{-1}).$ Then, there is a $u\in B(x,\rho)$ such that $y\in (F+G)(u)$, that is, $u\in (F+G)^{-1}(y).$ \\
Thus, 
$$d(x,(F+G)^{-1}(y))\leq d(x,u)<\rho=\tau d(y,F(x)+G(x)\cap B(\bar{k},\rho_2))+\eta.$$
Since $\eta>0$ is arbitrary,  the proof is complete.\qed 
\end{proof}

The next result  gives  conditions for the sum of two  metrically regular mappings $F, G$ to remain  metrically regular.  Before stating this  result, we need to recall the   so-called ``locally sum-stable" property  introduced in \cite{Dustru}. 
 \begin{definition}
Let $F, G: X\rightrightarrows Y$ be two multifunctions and $(\bar{x},\bar{y}, \bar{z})\in X\times Y\times Y$ such that $\bar{y}\in F(\bar{x}),\bar{z}\in G(\bar{x}).$ We say that the pair  $(F,G)$ is locally sum-stable around $(\bar{x},\bar{y}, \bar{z})$ iff for every $\varepsilon>0,$ there exists $\delta>0$ such that, for every $x\in B(\bar{x},\delta)$ and every $w\in (F+G)(x)\cap  B(\bar{y}+\bar{z},\delta),$ there are $y\in F(x)\cap  B(\bar{y},\varepsilon)$ and $z\in G(x)\cap  B(\bar{z},\varepsilon)$ such that $w=y+z.$
\end{definition}
A simple case which ensures the local sum-stability of  $(F,G)$ is as follows.
\begin{proposition}\label{usc-sum}
Let $F:X\rightrightarrows Y, G:X\rightrightarrows Y$ be two multifunctions and $(\bar{x},\bar{y}, \bar{z})\in X\times Y\times Y$ such that $\bar{y}\in F(\bar{x}),\bar{z}\in G(\bar{x}).$ If $G(\bar{x})=\{\bar{z}\}$ and $G$ is upper semicontinuous at $\bar{x},$ then  the pair  $(F,G)$ is locally sum-stable around $(\bar{x},\bar{y}, \bar{z}).$ 
\end{proposition}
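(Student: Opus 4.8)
The plan is to unwind the definitions and verify the sum-stable condition directly, exploiting the fact that $G(\bar{x})$ is the singleton $\{\bar{z}\}$. Since the only $z$-component available at $\bar{x}$ is $\bar{z}$ itself, upper semicontinuity forces every nearby value of $G$ to cluster near $\bar{z}$, which is exactly what one needs to control the $G$-part of a decomposition $w=y+z$. First I would recall the relevant notion of upper semicontinuity: $G$ is upper semicontinuous at $\bar{x}$ means that for every open set $\mathcal{O}\supseteq G(\bar{x})=\{\bar{z}\}$, there is $\delta>0$ such that $G(x)\subseteq\mathcal{O}$ for all $x\in B(\bar{x},\delta)$. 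Applied to $\mathcal{O}=B(\bar{z},\varepsilon)$, this yields, for each prescribed $\varepsilon>0$, a radius $\delta_1>0$ with $G(x)\subseteq B(\bar{z},\varepsilon)$ whenever $x\in B(\bar{x},\delta_1)$.

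Next I would fix $\varepsilon>0$ and an arbitrary $x\in B(\bar{x},\delta)$ together with $w\in(F+G)(x)\cap B(\bar{y}+\bar{z},\delta)$, where $\delta\le\delta_1$ is to be pinned down. By definition of the sum $F+G$, there exist $y\in F(x)$ and $z\in G(x)$ with $w=y+z$. The point $z$ automatically satisfies $z\in B(\bar{z},\varepsilon)$ by the upper-semicontinuity step, so the $G$-component of the decomposition already lands in the required ball. It then remains only to force the $F$-component $y=w-z$ into $B(\bar{y},\varepsilon)$. Writing $y-\bar{y}=(w-\bar{y}-\bar{z})+(\bar{z}-z)$ and applying the triangle inequality gives
\[
\|y-\bar{y}\|\le\|w-(\bar{y}+\bar{z})\|+\|\bar{z}-z\|<\delta+\varepsilon.
\]
Choosing $\delta:=\min\{\delta_1,\varepsilon\}$ then yields $\|y-\bar{y}\|<2\varepsilon$; absorbing the harmless factor of $2$ (e.g.\ by running the argument with $\varepsilon/2$ in place of $\varepsilon$) produces $y\in F(x)\cap B(\bar{y},\varepsilon)$, which is precisely the conclusion.

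The mechanism is transparent because the singleton hypothesis removes all freedom in the $G$-component: any admissible $z\in G(x)$ is forced near $\bar{z}$ by upper semicontinuity, and then $y=w-z$ is pinned near $\bar{y}$ by the proximity of $w$ to $\bar{y}+\bar{z}$. There is no genuine obstacle here; the only point requiring mild care is the bookkeeping of constants, namely arranging at the outset that the final estimate delivers $\varepsilon$ rather than $2\varepsilon$, which is handled by the standard device of shrinking the target radius before invoking upper semicontinuity. I would therefore organize the write-up so that $\delta$ is selected at the end, after both balls have been accounted for, to keep the constant-chasing clean.
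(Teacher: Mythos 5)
Your proof is correct and follows essentially the same route as the paper's: apply upper semicontinuity of $G$ at $\bar{x}$ to trap the $G$-component near $\bar{z}$, then use the triangle inequality on $y=w-z$ to trap the $F$-component near $\bar{y}$. The paper handles the constant-chasing by invoking upper semicontinuity with $\varepsilon/2$ from the outset and taking $\eta=\min\{\delta,\varepsilon/2\}$, which is exactly the rescaling device you describe at the end.
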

\begin{proof} Since  $G$ is upper semicontinuous at $\bar{x},$ for every $\varepsilon>0$ there exists $\delta>0$ such that $$G(x)\subset G(\bar{x})+B(0,\varepsilon/2) = \bar{z}+B(0,\varepsilon/2)=B(\bar{z},\varepsilon/2), \quad \text{for all} \;x\in B(\bar{x},\delta).$$
Set  $$\eta:=\min\{\delta,\varepsilon/2\}$$ and take $x\in B(\bar{x},\eta)$ and $w\in (F+G)(x)\cap B(\bar{y}+\bar{z},\eta)$. Then, there are $y\in F(x), z\in G(x)$ such that
$$w=y+z \;\text{and}\; w\in B(\bar{y}+\bar{z},\eta).$$ 
Clearly, $z\in B(\bar{z},\varepsilon/2)\subset B(\bar{z},\varepsilon).$\\ Moreover, $$\Vert y-\bar{y}\Vert=\Vert w-z-\bar{y}\Vert\le \Vert w-\bar{y}-\bar{z}\Vert+\Vert z-\bar{z}\Vert<\eta+\varepsilon/2\le \varepsilon/2+\varepsilon/2=\varepsilon.$$
Consequently, $$w=y+z, y\in F(x)\cap B(\bar{y},\varepsilon),z\in G(x)\cap B(\bar{z},\varepsilon).$$ 
Hence we have established that $(F,G)$ is locally sum-stable around $(\bar{x},\bar{y}, \bar{z}).$\qed 
\end{proof}
 \begin{proposition}\label{me-sum} Let $X$ be a complete metric
space, $Y$ be a Banach space and let $F, G:X\rightrightarrows Y$ be  closed multifunctions. Suppose that  $(\bar{x},\bar{k},\bar{y})\in X\times Y\times Y$ is such that $\bar{y}\in F(\bar{x})+\bar{k}, \bar{k}\in G(\bar{x})$.\\
If  the pair  $(F,G)$ is locally sum-stable around $(\bar{x}, \bar{y}-\bar{k}, \bar{k} )$ and there exist a neighborhood $\mathcal{U}\times \mathcal{V}$ of $(\bar{x},\bar{y})$  and $\tau,\theta>0$    such that
\begin{equation}\label{star}
 d(x,(F+G)^{-1}(y))\leq \tau d(y,F(x)+G(x)\cap  B(\bar{k},\theta) )\quad \mbox{for all}\quad (x,y)\in \mathcal{U}\times \mathcal{V},
\end{equation}
 then $F+G$ is metrically regular around $(\bar{x},\bar{y})$ with modulus $\tau.$

 As a result, if $G$ is upper semicontinuous at $\bar{x}$ and $G(\bar{x})=\{\bar{k}\},$ then $F+G$ is metrically regular around $(\bar{x},\bar{y})$ with modulus $\tau.$
 \end{proposition}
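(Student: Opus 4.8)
The plan is to deduce metric regularity directly from the quantitative estimate (\ref{star}) by replacing, in its right-hand side, the distance to the restricted sum $F(x)+G(x)\cap B(\bar k,\theta)$ with the distance to the full image $(F+G)(x)$. Since $F(x)+\big(G(x)\cap B(\bar k,\theta)\big)\subseteq (F+G)(x)$, one always has $d\big(y,(F+G)(x)\big)\le d\big(y,F(x)+G(x)\cap B(\bar k,\theta)\big)$, so the whole difficulty is to establish the reverse inequality for $(x,y)$ ranging over a suitable neighborhood of $(\bar x,\bar y)$. This is where local sum-stability enters: I would apply the definition with $\varepsilon=\theta$ to produce $\delta>0$ such that every $w\in(F+G)(x)\cap B(\bar y,\delta)$ with $x\in B(\bar x,\delta)$ splits as $w=f+g$ with $f\in F(x)$ and $g\in G(x)\cap B(\bar k,\theta)$; equivalently, such a $w$ already belongs to $F(x)+G(x)\cap B(\bar k,\theta)$.

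Next I would carry out the distance comparison in the regime where the distance is small. Fix $(x,y)$ with $x$ close to $\bar x$ and $y$ close to $\bar y$, and suppose $d\big(y,(F+G)(x)\big)$ is strictly smaller than a threshold of order $\delta$ minus the radius of the $y$-neighborhood. Any near-minimizer $w\in(F+G)(x)$ of $d\big(y,(F+G)(x)\big)$ then satisfies $\|w-\bar y\|\le\|w-y\|+\|y-\bar y\|<\delta$, hence lies in $B(\bar y,\delta)$; by the previous step it belongs to $F(x)+G(x)\cap B(\bar k,\theta)$. Letting the minimizing error tend to $0$ yields $d\big(y,F(x)+G(x)\cap B(\bar k,\theta)\big)\le d\big(y,(F+G)(x)\big)$, so the two distances coincide, and (\ref{star}) gives $d\big(x,(F+G)^{-1}(y)\big)\le\tau\,d\big(y,(F+G)(x)\big)$ in this case.

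The step I expect to be the main obstacle is the complementary ``far'' regime, where $d\big(y,(F+G)(x)\big)$ exceeds the threshold and near-minimizers may escape $B(\bar y,\delta)$, so sum-stability no longer applies. Here I would instead argue that the right-hand side of the metric-regularity inequality is already large and bound the left-hand side independently. Applying (\ref{star}) at the base point $(\bar x,y)$ and using that $\bar y=(\bar y-\bar k)+\bar k\in F(\bar x)+G(\bar x)\cap B(\bar k,\theta)$ gives $d\big(\bar x,(F+G)^{-1}(y)\big)\le\tau\|y-\bar y\|$; hence, by the triangle inequality for the distance to a set, $d\big(x,(F+G)^{-1}(y)\big)\le d(x,\bar x)+\tau\|y-\bar y\|$. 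Choosing radii $\alpha$ (for $x$) and $\beta$ (for $y$) small enough that $\alpha+2\tau\beta\le\tau\delta$, in the far regime one obtains $d\big(x,(F+G)^{-1}(y)\big)\le\alpha+\tau\beta\le\tau(\delta-\beta)\le\tau\,d\big(y,(F+G)(x)\big)$ (this also covers $(F+G)(x)=\emptyset$, where the right-hand side is $+\infty$).

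Combining the two regimes on the neighborhood $B(\bar x,\alpha)\times B(\bar y,\beta)$, shrunk to lie inside the neighborhood of (\ref{star}) and inside $B(\bar x,\delta)\times B(\bar y,\delta)$, yields $d\big(x,(F+G)^{-1}(y)\big)\le\tau\,d\big(y,(F+G)(x)\big)$, that is, metric regularity of $F+G$ around $(\bar x,\bar y)$ with modulus $\tau$. The final ``as a result'' assertion then follows immediately: when $G(\bar x)=\{\bar k\}$ and $G$ is upper semicontinuous at $\bar x$, Proposition \ref{usc-sum} guarantees that $(F,G)$ is locally sum-stable around $(\bar x,\bar y-\bar k,\bar k)$, so the hypotheses of the first part are met and the same conclusion holds.
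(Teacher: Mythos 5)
Your proposal is correct and follows essentially the same route as the paper's own proof: the same two-regime split, with local sum-stability (applied with $\varepsilon=\theta$) used in the ``near'' case to show that near-minimizers of $d\big(y,(F+G)(x)\big)$ already lie in $F(x)+G(x)\cap B(\bar k,\theta)$, and the estimate (\ref{star}) applied at the base point $\bar x$ used in the ``far'' case to bound $d\big(x,(F+G)^{-1}(y)\big)$ by $d(x,\bar x)+\tau\|y-\bar y\|$. Your bookkeeping of the radii ($\alpha+2\tau\beta\le\tau\delta$) is in fact a bit more transparent than the constants in the paper's Case 2, but the argument is the same.
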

 \begin{proof}
  Suppose that (\ref{star}) holds for every $ (x,y)\in B(\bar{x},\delta_1) \times  B(\bar{y},\delta_2)$  for some $ \delta_1,\delta_2>0.$
 Since $(F,G)$ is locally sum-stable around $(\bar{x}, \bar{y}-\bar{k}, \bar{k}),$  there exists $\delta>0$ such that,  for every $x\in B(\bar{x},\delta)$ and every $w\in (F+G)(x)\cap  B(\bar{y},\delta),$ there are $y\in F(x)\cap  B(\bar{y}-\bar{k},\theta)$ and $z\in G(x)\cap  B(\bar{k},\theta)$ such that $w=y+z.$\\
Taking $\delta$ smaller if necessary, we can assume that $\delta<\delta_1.$ Fix $(x,y)\in B(\bar{x},\delta/2) \times  B(\bar{y},\delta/2).$ 
We consider two following cases:

Case 1. $d(y,F(x)+G(x))<\delta/2.$  Fix $\gamma>0$ , small enough in order to have   $$d(y,F(x)+G(x))+\gamma<\delta/2,$$ and take  $t\in F(x)+G(x)$ such that $\Vert y-t\Vert<d(y,F(x)+G(x))+\gamma.$  Hence we have $\Vert y-t\Vert<\delta/2,$
and since  we also have $\Vert y-\bar{y}\Vert<\delta/2, $ this yields  $$\Vert t-\bar{y}\Vert\le \Vert y-t\Vert+\Vert y-\bar{y}\Vert<\delta/2+\delta/2=\delta.$$ 
It follows that $$t\in [F(x)+G(x)]\cap B(\bar{y},\delta).$$ Since $(F,G)$ is locally sum-stable   around $(\bar x, \bar y-\bar k,\bar k)$,     there are $y\in F(x)\cap  B(\bar{y}-\bar{k},\theta)$ and $z\in G(x)\cap  B(\bar{k},\theta)$ such that $$t=y+z.$$
Consequently,  $$t\in F(x)\cap  B(\bar{y}-\bar{k},\theta)+G(x)\cap  B(\bar{k},\theta)\subset F(x)+G(x)\cap  B(\bar{k},\theta).$$
Therefore, $$d(y,F(x)+G(x)\cap  B(\bar{k},\theta) )\le \Vert y-t\Vert,$$
from which we derive
 $$d(y,F(x)+G(x)\cap  B(\bar{k},\theta) )\le d(y,F(x)+G(x)) + \gamma,$$ and therefore, as $\gamma$ is arbitrarily small, we obtain  that
 $$d(y,F(x)+G(x)\cap  B(\bar{k},\theta) )\le d(y,F(x)+G(x)).$$
By (\ref{star}), one gets that $$d(x,(F+G)^{-1}(y))\leq \tau d(y,F(x)+G(x)).$$
Since $(x,y)$ is arbitrary in $B(\bar{x},\delta/2) \times  B(\bar{y},\delta/2)$, this yields  $$d(x,(F+G)^{-1}(y))\leq \tau d(y,F(x)+G(x)),$$
$\text{for\; all}\;(x,y)\in B(\bar{x},\delta/2) \times  B(\bar{y},\delta/2).$\\
Case 2.  If  $d(y,F(x)+G(x))\geq \delta/2.$  Choose  $\delta$   sufficiently small so that $\tau\delta/4<\delta_1.$ For every $(x,y)\in B(\bar{x},\tau\delta/4) \times  B(\bar{y},\delta/4) $ and  any $\varepsilon>0,$ by (\ref{star}), there exists $u\in (F+G)^{-1}(y)$ such that $$d(\bar{x},u)<(1+\varepsilon)\tau d(y,F(\bar{x})+G(\bar{x}))\le (1+\varepsilon)\tau\Vert y-\bar{y}\Vert<(1+\varepsilon)\tau\delta/2\le (1+\varepsilon)\tau/2d(y,F(x)+G(x)).$$ So,
\begin{align*}
 d(x,u)\le d(x,\bar{x})+d(\bar{x},u)\\
 &<\tau\delta/4+(1+\varepsilon)\tau/2d(y,F(x)+G(x))\\
 &<\tau/2d(y,F(x)+G(x))+(1+\varepsilon)\tau/2d(y,F(x)+G(x)).\\
 \end{align*}
Taking the limit as $\varepsilon>0$ goes to $0$, it follows that $$d(x,(F+G)^{-1}(y))\leq \tau d(y,F(x)+G(x)).$$ So, $$d(x,(F+G)^{-1}(y))\leq \tau d(y,F(x)+G(x)),$$ $\text{for all}\;(x,y)\in B(\bar{x},\tau\delta/4) \times  B(\bar{y},\delta/4).$
The proof is complete.\qed 
\end{proof}
 
The following example shows that the sum of a metrically regular set-valued mapping and a pseudo-Lipschitz  one is  not generally metrically  regular without the sum-stability (see \cite{Dustru} for a similar example on the sum of two pseudo-Lipschitz   set-valued mappings).
\begin{example} Let $F, G:\R\rightrightarrows \R$ be given  as
\[
F(x):=
\begin{cases}
[-x, +\infty[, & \text{if $x\in [0,+\infty[$}\\
\{-1\}, & \text{otherwise}%
\end{cases}
\]
and
$$ G(x):=\{0,1\}, \;x\in\R.$$
Then, obviously, $F, G$ are closed multifunctions and, it is easy to see that $F$ is metrically regular around $(0,0)$ and $G$ is pseudo-Lipschitz around $(0,0)$.
However, $(F,G)$ is not sum-stable around $(0,0,0)$ and $F+G$  fails to be  metrically regular around $(0,0)$.
\end{example}
\begin{proof}
 Indeed, we have that
 
 \[
(F+G)(x)=
\begin{cases}
[-x, +\infty[, & \text{if} x\in [0,+\infty[\\
+\infty & \text{otherwise.}
\end{cases}
\]
and
\[
(F+G)^{-1}(x)=
\begin{cases}
]-x, +\infty[, & \text{if $x\in ]-\infty,0[\setminus\{-1\}$}\\
\R, & \text{if $x=0$}\\
]-\infty,0]\cup ]1,+\infty[, & \text{if $x=-1$}\\
]0, +\infty[\cup\{1\} , & \text{if $x\in ]0,+\infty[.$}\\
\end{cases}
\]
Suppose that $F+G$ is metrically regular around $(0,0)$, then there exist $\tau>0$ and $0<\delta<\min\{1, \tau^{-1}\}$ such that, for every $(x,y)\in ]-\delta,\delta[\times ]-\delta,\delta[$, one has 
\begin{equation}\label{HDHM}
d(x,(F+G)^{-1}(y))\le \tau d(y,(F+G)(x)).\end{equation}
Consider $x:=-\delta/2$ and $y:=-\delta^{2}/2$. Then, $x\in ]-\infty,0[$, $y\in ]-\infty,0[$ and $$(F+G)(x)=\{-1,0\}, (F+G)^{-1}(y)=]\delta^{2}/2,+\infty[.$$
Thus, $$d(x,(F+G)^{-1}(y))=d(-\delta/2,]\delta^{2}/2,+\infty[)=\vert -\delta/2-(\delta^{2}/2)\vert=\delta/2+\delta^{2}/2,$$ and, 
$$d(y,(F+G)(x))=d(-\delta^{2}/2,\{-1,0\})=\min\{1-\delta^{2}/2,\delta^{2}/2\}=\delta^{2}/2.$$ Consequently, by (\ref{HDHM}), one obtains that $\delta/2+\delta^{2}/2\le \tau \delta^{2}/2$. Since,  $1<1+\delta\le \tau \delta$,  this yields 
$\delta>\tau^{-1},$ which contradits the choice of $\delta.$ Hence, $F+G$ can not metrically regular around $(0,0)$. \\
Of course, $(F,G)$ is not sum-stable around $(0,0,0).$ Indeed, 
 take $0<\varepsilon<1$, then, for every $\delta>0$, 
consider  $x_{\delta}:=\delta/2\in ]-\delta,\delta[$ and $w_{\delta}:=\delta/2\in (F+G)(x_{\delta})\cap ]-\delta,\delta[=]-\delta/2,\delta[$. By taking $\varepsilon$ smaller if necessary, we can assume that $\delta>2\varepsilon.$ Then, for every $y_{\delta}\in F(x_{\delta})\cap (-\varepsilon,\varepsilon)=]-\varepsilon,\varepsilon[$ and, for every $z_{\delta}\in G(x_{\delta})\cap ]-\varepsilon,\varepsilon[=\{0\},$ one has $w_{\delta}=\delta/2>\varepsilon+0>y_{\delta}+z_{\delta}.$\qed 
\end{proof}
The following theorem establishes   metric regularity of  the multifunction   $\mathcal{E}_{(F,G)}$ as well as metric regularity of the sum mapping, of course, with the sum-stable assumption added.
\begin{theorem} \label{Slope O} Let $X$ be a complete metric
space, let $Y$ be a Banach space and let $F, G:X\rightrightarrows Y$ be  closed multifunctions. Suppose that  $(\bar{x},\bar{k},\bar{y})\in X\times Y\times Y$  is  such that $\bar{y}\in F(\bar{x})+\bar{k}, \bar{k}\in G(\bar{x})$,  $F$ be metrically regular around $(\bar{x},\bar{y}-\bar{k})$ with modulus $\tau>0$ and $G$ is  pseudo-Lipschitz  around $(\bar{x},\bar{k})$ with modulus $\lambda>0$ with $\tau\lambda<1.$ Suppose  that the product space $X\times Y$  is  endowed with the metric defined by 
$$d((x,k),(u,z))=\max\{d(x,u),\Vert z-k\Vert/\lambda\}.$$ Then $\mathcal{E}_{(F,G)}$ is metrically regular around $(\bar{x},\bar{k},\bar{y})$ with modulus $(\tau^{-1}-\lambda)^{-1}.$

If  in addition  we suppose that the pair $(F,G)$  is  locally sum-stable around $(\bar{x},\bar{y}-\bar{k}, \bar{k})$,  then  $F+G$ is metrically regular around $(\bar{x},\bar{y})$ with modulus $(\tau^{-1}-\lambda)^{-1}.$
\end{theorem}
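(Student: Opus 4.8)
The plan is to prove the first assertion through the strong‑slope characterization of Lemma~\ref{strophi}, and then to deduce the regularity of $F+G$ by feeding that conclusion into Propositions~\ref{Slope E} and~\ref{me-sum}. The first thing I would record is that the pseudo‑Lipschitz property of $G$ at $(\bar x,\bar k)$ forces $G$ to be lower semicontinuous at every $(x,k)\in\gph G$ near $(\bar x,\bar k)$: applying the Aubin estimate with $k\in G(x)$ gives $d(k,G(u))\le\lambda\,d(x,u)\to0$ as $u\to x$, which is exactly inner semicontinuity. Hence the second representation in Lemma~\ref{tron} is available, and since $d(y,F(u)+k)=d(y-k,F(u))$, one obtains the key identity $\varphi_{\mathcal{E}}((x,k),y)=\varphi_F(x,y-k)$ whenever $k\in G(x)$, where $\varphi_F(x,w)=\liminf_{u\to x}d(w,F(u))$.

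The heart of the argument is then a lower estimate for the strong slope of $(x,k)\mapsto\varphi_{\mathcal{E}}((x,k),y)$ at a point where $\varphi_{\mathcal{E}}\in\,]0,\gamma[$. Fix such $(x,k,y)$ with $k\in G(x)$ and set $w=y-k$, so that $\varphi_F(x,w)=\varphi_{\mathcal{E}}((x,k),y)\in\,]0,\gamma[$ and $(x,w)$ lies near $(\bar x,\bar y-\bar k)$. Since $F$ is metrically regular around $(\bar x,\bar y-\bar k)$ with modulus $\tau$, the implication $(i)\Rightarrow(iv)$ of Theorem~\ref{Char2} (in the parameter‑free setting, valid since $Y$ is normed) gives $|\nabla\varphi_F(\cdot,w)|(x)\ge\tau^{-1}$. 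By definition of the slope there is a sequence $u_n\to x$, $u_n\neq x$, with $\varphi_F(u_n,w)<\varphi_F(x,w)-(\tau^{-1}-\varepsilon)\,d(x,u_n)$; for each $n$ the Aubin property of $G$ yields $z_n\in G(u_n)$ with $\|z_n-k\|\le\lambda\,d(x,u_n)+o(d(x,u_n))$, so $z_n\to k$ and $(u_n,z_n)\in\gph G$. Using the $1$‑Lipschitz dependence of $\varphi_F(u,\cdot)$ on its target and the identity above,
$$\varphi_{\mathcal{E}}((u_n,z_n),y)=\varphi_F(u_n,y-z_n)\le\varphi_F(u_n,w)+\|z_n-k\|,$$
whence $\varphi_{\mathcal{E}}((x,k),y)-\varphi_{\mathcal{E}}((u_n,z_n),y)\ge(\tau^{-1}-\varepsilon-\lambda)\,d(x,u_n)-o(d(x,u_n))$. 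The decisive point is the rescaled metric: from $\|z_n-k\|/\lambda\le d(x,u_n)+o(d(x,u_n))$ one gets $d((x,k),(u_n,z_n))=\max\{d(x,u_n),\|z_n-k\|/\lambda\}=d(x,u_n)(1+o(1))$. Dividing, letting $n\to\infty$ and then $\varepsilon\downarrow0$ yields $|\nabla\varphi_{\mathcal{E}}((\cdot,\cdot),y)|(x,k)\ge\tau^{-1}-\lambda$, which is condition (iii) of Lemma~\ref{strophi} with $(\tau^{-1}-\lambda)^{-1}$ in place of $\tau$. The equivalence $(iii)\Leftrightarrow(i)$ there then gives $d((x,k),\S_{\mathcal{E}_{(F,G)}}(y))\le(\tau^{-1}-\lambda)^{-1}\varphi_{\mathcal{E}}((x,k),y)$ near $(\bar x,\bar k,\bar y)$, and since $\varphi_{\mathcal{E}}\le d(y,\mathcal{E}_{(F,G)}(x,k))$ this is precisely the metric regularity of $\mathcal{E}_{(F,G)}$ with modulus $(\tau^{-1}-\lambda)^{-1}$.

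For the sum, I would plug the estimate just obtained, read as statement (i) of Proposition~\ref{Slope E} with $\tau$ replaced by $(\tau^{-1}-\lambda)^{-1}$, into the implication $(i)\Rightarrow(ii)$ of that proposition, producing a neighborhood on which $d(x,(F+G)^{-1}(y))\le(\tau^{-1}-\lambda)^{-1}\,d(y,F(x)+G(x)\cap\mathcal{V})$. With the extra hypothesis that $(F,G)$ is locally sum‑stable around $(\bar x,\bar y-\bar k,\bar k)$, Proposition~\ref{me-sum} upgrades this restricted one‑sided estimate to genuine metric regularity of $F+G$ around $(\bar x,\bar y)$ with the same modulus $(\tau^{-1}-\lambda)^{-1}$, completing the proof.

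I expect the slope estimate to be the main obstacle. One must simultaneously decrease $\varphi_F(\cdot,w)$ at the optimal rate $\tau^{-1}$ by perturbing $x$ and keep the second coordinate inside $\gph G$ by moving $k$ to some $z_n\in G(u_n)$; only the precise interplay between the Aubin modulus $\lambda$, the $1$‑Lipschitz dependence of $\varphi_F$ on its target, and the rescaled metric $\max\{d(x,u),\|z-k\|/\lambda\}$ makes the two perturbations combine into the clean loss $\tau^{-1}-\lambda$. This is exactly where $\tau\lambda<1$ enters, guaranteeing that the resulting slope bound stays strictly positive. Verifying the boundary situations (that $k\in G(x)$ and $\varphi_{\mathcal{E}}>0$ along the relevant points, and that $(x,y-k)$ remains inside the regularity neighborhood of $F$) is routine once $\gamma$ and the neighborhoods are shrunk appropriately.
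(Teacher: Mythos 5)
Your proof is correct and follows essentially the same route as the paper: both arguments exploit the pseudo-Lipschitz property to reduce $\varphi_{\mathcal{E}}((x,k),y)$ to $\varphi_F(x,y-k)$ via Lemma~\ref{tron}, descend along $\varphi_F$ at rate $\tau^{-1}$ using the metric regularity of $F$, track $k$ into $G(u_n)$ at cost $\lambda\,d(x,u_n)$, and let the rescaled metric $\max\{d(x,u),\|z-k\|/\lambda\}$ absorb that cost so that only the loss $\tau^{-1}-\lambda$ remains, before passing to the sum via Propositions~\ref{Slope E} and~\ref{me-sum}. The only (immaterial) difference is that you verify the slope condition (iii) of Lemma~\ref{strophi} directly, whereas the paper verifies the equivalent sequential condition (ii).
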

\begin{proof}
Since by assumption $G$ is  pseudo-Lipschitz   around $(\bar{x},\bar{k})$ with modulus $\lambda>0$, there exist $\delta_1,\delta_2>0$ such that
\begin{equation}\label{pseudo}
G(x_1)\cap B(\bar{k},\delta_1)\subset G(x_2)+\lambda \Vert x_1-x_2\Vert \bar{B}_{Y}, \quad \text{for all} \; x_1,x_2\in B(\bar{x},\delta_2).
\end{equation}
Then,  obviously, $G$ is lower semicontinuous at  all $(x,k)\in (B(\bar{x},\delta_2)\times B(\bar{k},\delta_1))\cap\gph G.$. Therefore, $\varphi_{\mathcal{E}}$ is given by the second equality in Lemma \ref{tron}.
Furthermore, since $F$ is metrically regular around $(\bar{x},\bar{y}-\bar{k})$ with modulus $\tau>0$, there exist $\delta_3,\delta_4>0$ and a real $\gamma>0$ such that 
\begin{equation}\label{Cond Slope}
|\nabla\varphi_F(\cdot,y)|(x)\geq \frac{1}{\tau}\quad\mbox{for all}\; (x,y)\in
B(\bar{x},\delta_3)\times B(\bar{y}-\bar{k},\delta_4)\quad\mbox{with}\; \varphi_F(x,y)\in ]0,\gamma[.
\end{equation}
So,  for any $ \varepsilon >0$,  there exists $u\in B(x,\delta_3), u\ne x$  such that 
$$\frac{\varphi_F(x,y)-\varphi_F(u,y)}{d(x,u)}>\frac{1}{\tau+\varepsilon/2}.$$
Taking $\delta_1,\delta_3$  smaller if neccesary, we can assume that $\delta_1<\delta_4,$ and $\delta_3<\delta_2.$  
Then, for every $(x,k,y)\in B(\bar{x},\min\{\delta_2,\delta_3\}/2)\times B(\bar{k},\delta_1)\times B(\bar{y},\delta_4-\delta_1)$ with $y-k\notin F(x), k\in G(x),$  any 
$\varepsilon>0 $ and  any  sequence $\{x_n\}_{n\in\N}\subseteq X$ converging to $x$, $\{k_n\}_{n\in\N}\subseteq X$ converging to $k$ with $k_n\in G(x_n),$ and
$$\lim_{n\to\infty} d(y-k_n,F(x_n))=\liminf_{u\to x}d(y-k,F(u)),$$ we deduce that 
\begin{equation}\label{phi}
\frac{\varphi_F(x,y-k)-\varphi_F(u,y-k)}{d(x,u)}>\frac{1}{\tau+\varepsilon/2}, (\text{since}\; y-k\in  B(\bar{y}-\bar{k},\delta_4)),
\end{equation}
 and 
$$\lim_{n\to\infty} d(y-k,F(x_n))=\lim_{n\to\infty} d(y-k_n,F(x_n))=\liminf_{u\to x}d(y-k,F(u))=\varphi_F(x,y-k).$$
On the other hand, by definition of the function $\varphi_{\mathcal{E}}$, there is a sequence $\{u_n\}_{n\in\N}\subseteq X$ converging to $u$ such that 
$$\lim_{n\to\infty} d(y-k,F(u_n))=\varphi_F(u,y-k).$$
Because $u\in B(x,\delta_3), x\in B(\bar{x},\min\{\delta_2,\delta_3\}/2),\{u_n\}_{n\in\N}\to u$, for $n$ large enough, one has that $u_n\in B(\bar{x},\delta_2).$
Similarly, since $k\in B(\bar{k},\delta_1)$ and $\{k_n\}_{n\in\N}\subseteq X$ converges to $k$,  for $n$ large enough, one has that $k_n\in B(\bar{k},\delta_1).$ \\
Therefore, by (\ref{pseudo}), and (\ref{phi}), there exists $z_n\in G(u_n)$ such that 
\begin{equation}\label{1}
\Vert z_n-k_n\Vert\le \lambda d(x_n,u_n).
\end{equation}
and
$$\lim_{n\to\infty}\frac{d(y-k,F(x_n))-d(y-k,F(u_n))}{d(x,u)}>\frac{1}{\tau+\varepsilon}.$$
Thus, noting  that $ u\not= x$, one has that
\begin{align*}
 &\limsup_{n\to\infty}\frac{d(y-k,F(x_n))-d(y-k,F(u_n))}{d(x_n,u_n)}\\
&=\lim_{n\to\infty}\frac{d(y-k,F(x_n))-d(y-k,F(u_n))}{d(x_n,u_n)}\\
&=\lim_{n\to\infty}\frac{d(y-k,F(x_n))-d(y-k,F(u_n))}{d(x,u)}\frac{d(x,u)}{d(x_n,u_n)}\\
&=\lim_{n\to\infty}\frac{d(y-k,F(x_n))-d(y-k,F(u_n))}{d(x,u)}\lim_{n\to\infty}\frac{d(x,u)}{d(x_n,u_n)}\\&\le \frac{1}{\tau+\varepsilon}.\\
\end{align*}
On the other hand,
\begin{equation}\label{3}
d(y-z_n,F(u_n))\le d(y-k_nF(u_n))+\Vert k_n-z_n\Vert.
\end{equation}
From relations (\ref{1}), (\ref{3}), we deduce that for any $(x,k,y)\in B(\bar{x},\min\{\delta_2,\delta_3\}/2)\times B(\bar{k},\delta_1)\times B(\bar{y},\delta_4-\delta_1)$ with $y-k\notin F(x),k\in G(x),$ and any $\varepsilon>0,$ any  sequence $\{x_n\}_{n\in\N}\subseteq X$ converging to $x$, $\{k_n\}_{n\in\N}\subseteq X$ converging to $k,$ there exists $\{(u_n,z_n)\}_{n\in\N}$ with 
$$\liminf_{n\to\infty} d((u_n,z_n),(x,k))=\liminf_{n\to\infty}\max\{d(u_n,x),\Vert z_n-k\Vert/\lambda\} \geq \liminf_{n\to\infty}d(u_n,x)>0,$$
(since $0<d(x,u)\le d(u_n,x)+d(u_n,u)$ and $u_n\to u$)\\
 such that 
\begin{align*}
&\limsup_{n\to\infty}\frac{d(y-k_n,F(x_n))-d(y-z_n,F(u_n))}{d((x_n,k_n),(u_n,z_n))}\\
&\geq \limsup_{n\to\infty}\frac{d(y-k_n,F(x_n))-d(y-k_n,F(u_n))-\Vert k_n-z_n\Vert}{d((x_n,k_n),(u_n,z_n))}\\
&=\limsup_{n\to\infty}\frac{d(y-k_n,F(x_n))-d(y-k_n,F(u_n))-\Vert k_n-z_n\Vert}{\max\{d(x_n,u_n),\Vert k_n-z_n\Vert /\lambda \}}\\
&\geq\limsup_{n\to\infty}\frac{d(y-k_n,F(x_n))-d(y-k_n,F(u_n))}{\max\{d(x_n,u_n),\Vert k_n-z_n\Vert /\lambda \}}-\lambda\\
&=\limsup_{n\to\infty}\frac{d(y-k_n,F(x_n))-d(y-k_n,F(u_n))}{d(x_n,u_n)}-\lambda>\frac{1}{\tau+\varepsilon}-\lambda,\\
\end{align*}

$(\text{since} \;\Vert z_n-k_n\Vert/\lambda \le d(x_n,u_n)).$\\
By Lemma \ref{strophi} ($(i)\Leftrightarrow (ii)$), one concludes that $\mathcal{E}_{(F,G)}$ is metrically regular around $(\bar{x},\bar{k},\bar{y})$ with modulus $(\tau^{-1}-\lambda)^{-1}.$  

If   the pair $(F,G)$ is locally sum-stable around $(\bar{x}, \bar{y}-\bar{k}, \bar{k}),$ then,  combining the hypothesis  Proposition \ref{me-sum} and Proposition \ref{Slope E}, we  complete the proof. \qed
\end{proof}
Combining Proposition \ref{Slope E} and Theorem \ref{Slope O}, we obtain  the following corollary, which is equivalent to the main result  (Theorem 3.3) in \cite{Dustru}, which is stated for the  difference of an open mapping and a  pseudo-Lipschitz  one.
\begin{corollary}
Let $X$ be a complete metric space, let $Y$ be a Banach space and let $F, G:X\rightrightarrows Y$ be  closed multifunctions. Suppose that  $(\bar{x},\bar{k},\bar{y})\in X\times Y\times Y$  is   such that $\bar{y}\in F(\bar{x})+\bar{k}, \bar{k}\in G(\bar{x})$ and $F$ is  metrically regular around $(\bar{x},\bar{y}-\bar{k})$ with modulus $\tau>0$ and $G$  is   pseudo-Lipschitz  around $(\bar{x},\bar{k})$ with modulus $\lambda>0$ with $\tau\lambda<1.$ Then, there exist a neighborhood  $\;\mathcal{U}\times \mathcal{V}\times \mathcal{W}$ of $(\bar{x},\bar{k},\bar{y}-\bar{k})$  and $\varepsilon,\tau>0$ such that,  for every $(x,k,z)\in \mathcal{U}\times \mathcal{V}\times \mathcal{W}, k\in G(x), z\in F(x),$ and 
 $\rho\in ]0,\varepsilon[,$
$$B(k+z,\rho\tau^{-1})\subset  (F+G)(B(x,\rho)).$$ 
\end{corollary}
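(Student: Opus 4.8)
The plan is to obtain the corollary by chaining together Theorem~\ref{Slope O} and Proposition~\ref{Slope E}, with no appeal to the sum-stable property (which is exactly why only the constrained covering, and not genuine metric regularity of $F+G$, can be asserted here). First I would check that the hypotheses of Theorem~\ref{Slope O} hold verbatim: $F$ is metrically regular around $(\bar x,\bar y-\bar k)$ with modulus $\tau$, $G$ is pseudo-Lipschitz around $(\bar x,\bar k)$ with modulus $\lambda$, and $\tau\lambda<1$. Endowing $X\times Y$ with the metric $d((x,k),(u,z))=\max\{d(x,u),\|z-k\|/\lambda\}$, Theorem~\ref{Slope O} then yields that the epigraphical multifunction $\mathcal{E}_{(F,G)}$ is metrically regular around $(\bar x,\bar k,\bar y)$ with modulus $\mu:=(\tau^{-1}-\lambda)^{-1}$.

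The next step is to read this conclusion in the envelope form supplied by Lemma~\ref{strophi}. Since $G$ is pseudo-Lipschitz it is lower semicontinuous on a neighborhood of $(\bar x,\bar k)$ in $\gph G$, so Lemma~\ref{tron} applies and $\varphi_{\mathcal{E}}$ admits its simpler representation there; consequently the metric regularity just obtained is, via Lemma~\ref{strophi}, equivalent to the estimate
\[
d((x,k),\S_{\mathcal{E}_{(F,G)}}(y))\le \mu\,\varphi_{\mathcal{E}}((x,k),y)
\]
on some neighborhood $\mathcal{U}\times\mathcal{V}\times\mathcal{W}$ of $(\bar x,\bar k,\bar y)$. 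This is precisely statement~(i) of Proposition~\ref{Slope E} with the generic constant $\tau$ there replaced by $\mu$.

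It then remains to invoke the implications $(i)\Rightarrow(ii)\Leftrightarrow(iii)$ of Proposition~\ref{Slope E}. The implication $(i)\Rightarrow(ii)$, after shrinking the neighborhoods as in its proof, gives the constrained estimate $d(x,(F+G)^{-1}(y))\le\mu\,d(y,F(x)+G(x)\cap\mathcal{V})$; here one uses only that $d(x,u)\le d((x,k),(u,z))$ for the selected $(u,z)\in\S_{\mathcal{E}_{(F,G)}}(y)$, which holds for the max-type metric irrespective of the $1/\lambda$ rescaling. Finally $(ii)\Leftrightarrow(iii)$ produces, on a neighborhood of $(\bar x,\bar k,\bar y-\bar k)$ and for a suitable $\varepsilon>0$, the linear openness statement $B(k+z,\rho\mu^{-1})\subset(F+G)(B(x,\rho))$ for all admissible $(x,k,z)$ with $k\in G(x)$, $z\in F(x)$ and all $\rho\in\,]0,\varepsilon[$. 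Since $\mu^{-1}=\tau^{-1}-\lambda$, this is exactly the asserted conclusion once the re-declared constant $\tau$ in the statement of the corollary is identified with $\mu=(\tau^{-1}-\lambda)^{-1}$.

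The argument is essentially bookkeeping, and the one point requiring care is precisely this last identification: one must track the modulus $\mu=(\tau^{-1}-\lambda)^{-1}$ consistently through Theorem~\ref{Slope O}, the envelope reformulation, and the two implications of Proposition~\ref{Slope E}, while keeping in mind that the symbol $\tau$ in the corollary's statement is a fresh constant rather than the regularity modulus of $F$. A secondary, minor subtlety is the discrepancy between the $\lambda$-scaled product metric used in Theorem~\ref{Slope O} and the unscaled one implicit in Proposition~\ref{Slope E}; these are Lipschitz-equivalent, and since the only inequality actually needed in $(i)\Rightarrow(ii)$ is $d(x,u)\le d((x,k),(u,z))$, the rescaling leaves both the argument and the modulus intact.
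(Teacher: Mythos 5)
Your proposal is correct and follows exactly the route the paper intends: the paper's entire justification is the sentence ``Combining Proposition~\ref{Slope E} and Theorem~\ref{Slope O}, we obtain the following corollary,'' and you have simply made explicit the chain (Theorem~\ref{Slope O} gives statement~(i) of Proposition~\ref{Slope E} with modulus $(\tau^{-1}-\lambda)^{-1}$, then $(i)\Rightarrow(ii)\Leftrightarrow(iii)$ yields the covering). Your remarks on the re-declared constant $\tau$ and the $\lambda$-rescaled product metric are accurate and, if anything, more careful than the paper itself.
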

\section{Metric Regularity of the Epigraphical Multifunction under Coderivative Conditions}
In this section, $X, Y$ are assumed to be Asplund spaces, i.e., Banach spaces for which each  separable subspace has a separable dual (in particular, any reflexive space is Asplund; see, e.g., \cite{BZ05, Borisbook1} for more details).    We recall some notation, terminology  and   definitions basically   standard and conventional   in the area
of variational analysis and generalized differentials (see \cite{BZ05, Borisbook1, Roc-Wet, Schiro, penot2012, LPX2011} and the references therein).
As usual, $\|\cdot\|$
stands for the norm on $X$ or $Y$, indifferently,  and $\la\cdot,\cdot\ra$
signifies for the canonical pairing between $X$ and its
topological dual $X^\star$ with the symbol $\st{w^\star}\to$ indicating
the convergence in the weak$^\star$ topology of $X^\star$ and the symbol
$\cl$ standing for the weak$^\star$ topological closure of a set. %
Given  a set-valued mapping $F\colon X\rightrightarrows X^\star$ between $X$
and $X^\star$, recall that the symbol
\begin{equation}\label{jardin}
\Limsup_{x\rightarrow \ox} F(x):=\Big\{x^{\star}\in
X^{\star}\Big|\;\exists\,x_n\to\ox,\;\exists\,
x^{\star}_n\st{w^{\star}}\to x^{\star}\;\mbox{ with }\;x^{\star}_n\in F(x_n),\quad
n\in\N\Big\}
\end{equation}
stands for the {\em sequential Painlev\'{e}-Kuratowski outer/upper
limit} of $F$ as $x\to\ox$ with respect to the norm topology of
$X$ and the weak$^\star$ topology of $X^\star$.
Let us consider  $f: X\to \R\cup\{+\infty\}$ an  extended-real-valued lower semicontinuous function and $\bar{x}$ fixed in $X$.  The  notation  $x\st{f}{\to}\ox$ means  that 
with
$x\to\ox$ with $f(x)\to f(\ox)$. 
The Fr\'echet subdifferential  $\hat{\partial}f(\bar{x})$ of $f$ at $\bar{x}$ is given by the formula:  $$\hat{\partial}f(\bar{x})=\left\{x^{\star}\in X^{\star}: \liminf_{x\to \bar{x},\; x\ne \bar{x}}\frac{f(x)-f(\bar{x})-\langle x^{\star},x-\bar{x}\rangle}{\Vert x-\bar{x}\Vert}\geq 0\right\},$$
and $\hat{\partial}f(\bar{x})=\emptyset$ if $\bar{x}\notin\dom f.$

 The notation $ \partial f(\bar{x})$ is used  to denote the  limiting subdifferential of $f$ at $\bar{x}\in\dom f$. It is defined by
$$ \partial f(\bar{x}):=\Limsup_{x\overset {f} {\rightarrow}  \bar{x}}\hat{\partial}f(x).$$
 For a closed set $C\subset X$ and $\bar{x}\in C$, the Fr\'echet normal cone to $C$ at $\bar{x}$ is  denoted   $\hat{N}(\bar{x};C)$ and is defined as the    Fr\'echet subdifferential of indicator function $\delta_{C}$ of $C$  at $\bar{x},$ i.e., 
$$  \hat{N}(\bar{x};C):=\hat{\partial}\delta_{C}(\bar{x}),$$ where $\delta_{C} (x)=0\; \text{if}\; x\in C$, and $\delta_{C} (x)=+\infty\; \text{if}\; x\notin C.$

The limiting normal cone of $C$ at $\bar{x}$ is defined and denoted by 
$$N(\bar{x};C)=\partial \delta_{C}(\bar{x}).$$\
Let us   consider a closed multifunction  $F:X \rightrightarrows Y$  and $\bar{y}\in F(\bar{x}).$  The Fr\'echet coderivative of $F$ at $(\bar{x},\bar{y})$ is the mapping $\hat{D}^{\star}F(\bar{x},\bar{y}):Y^{\star} \rightrightarrows X^{\star}$ defined by
 $$x^{\star}\in \hat{D}^{\star}F(\bar{x},\bar{y})(y^{\star})\Leftrightarrow (x^{\star},-y^{\star})\in \hat{N}((\bar{x},\bar{y});\gph F),$$
 while the  Mordukhovich (limiting) coderivative of $F$ at $(\bar{x},\bar{y})$ is the mapping $D^{\star}F(\bar{x},\bar{y}):Y^{\star} \rightrightarrows X^{\star}$ defined by
 $$x^{\star}\in D^{\star}F(\bar{x},\bar{y})(y^{\star})\Leftrightarrow (x^{\star},-y^{\star})\in N((\bar{x},\bar{y});\gph F).$$
 Here, $\hat{N}((\bar{x},\bar{y});\gph F)$ and  $N((\bar{x},\bar{y});\gph F)$  are   the Fr\'echet and the  limiting normal cone to $\gph F$ at $(\bar{x},\bar{y}),$ respectively. 

To obtain a point-based condition for metric regularity of multifunctions  in infinite dimensional spaces, one often uses  the so-called   \textit{partial sequential normal compactness}   (PSNC) property.  \\
A  multifunction $F:X\rightrightarrows Y$  is \textit{partially sequentially normally compact}   at $(\bar{x},\bar{y})\in \gph F$, iff,  for any sequences  $\{(x_k , y_k , x_{k}^{\star} ,y_{k}^{\star})\}\in \gph F \times X^{\star} \times Y^{\star}$
satisfying $$(x_k , y_k) \to (\bar{x}, \bar{y}), x_{k}^{\star} \in \hat{D}^{\star}(x_k , y_k)(y_{k}^{\star}), x^{\star}_{k}\overset {w^{\star}} {\rightarrow} 0, \Vert y_{k}^{\star}\Vert\to 0,$$ 
one has $\Vert x^{\star}_{k}\Vert \to 0$ as $k\to \infty$.
\begin{remark}
Condition (PSNC) at $(\bar{x},\bar{y})\in \gph F$ is satisfied if $X$ is finite dimensional, or  $F$ is pseudo-Lipschitz around that point.
\end{remark} 
In the following, we need a result on {\it the metric inequality} (see, e.g., Ioffe \cite{ioffe01}, Huynh \&Th\'era \cite{RefNT1}). Let us recall that 
the sets  $\{\Omega_1, \Omega_2\}$  satisfy the metric inequality at $\bar{x}$ iff,  there are $\tau>0$ and $r>0$ such that $$d(x,\Omega_1\cap \Omega_2)\le \tau[d(x,  \Omega_1)+d(x,  \Omega_2)] \; \text{for all}\; x\in B(\bar{x},r).$$
\begin{definition}
We say that at  $\bar{x}$, property  $(\mathcal{H})$ is satisfied if 
\vskip 2mm\noindent
for any sequences $\{x_{ik}\}_{k\in\N} \subset  \Omega_{i} \ (i=1,2), \quad \{{x_{ik}}^{\ast}\}\in\hat{N}(x_{ik};\Omega_i)_{k\in\N}\  (i=1,2)$ such that  $\{x_{ik}\}_{k\in\N}\to \bar{x},$ and  $\|{x^{\star}_{1k}}+{x^{\star}_{2k}} \|_{k\in \N} \to 0$, then necessarily $\;\{x^{\star}_{1k}\} \to 0$ and   $\{x^{\star}_{2k}\} \to 0.$ 
\end{definition}
 Property $(\mathcal{H})$ was   called by 
A. Y. Kruger,   dual (or normal) uniform regularity (see, \cite{SVA} and \cite{TJM}  for a comparison between hypothesis   $(\mathcal{H})$ and the metric inequality.  One can also  note  that $(\mathcal{H})$ 
 is the Asplund space version of the Mordukhovich ``limiting qualification condition"  (cf. \cite[Definition 3.2 (ii)]{Borisbook1}). Although formally the last one is weaker, it is easy to show that in the Asplund space setting the two conditions  are equivalent.
\begin{proposition}\label{Fuzzy}
 Let  $\{\Omega_1, \Omega_2\}$ be two closed subsets of $X$ and fix  $\bar{x}\in \Omega_1\cap \Omega_2.$ If we suppose that property   $(\mathcal{H})$  holds, then
the sets  $\{\Omega_1, \Omega_2\}$ satisfy the metric inequality at $\bar{x}.$ 
Under this assumption,  there is some  $r>0$ such that   for every $ \varepsilon>0$,  and $x\in B(\bar{x},r),$ there exist  $x_1, x_2\in B(x,\varepsilon)$ such that
\begin{equation}\label{metricin}
\hat{N}(x;\Omega_1\cap\Omega_2)\subset \hat{N}(x_1;\Omega_1)+\hat{N}(x_2;\Omega_2)+\varepsilon B_{X^\star}.
\end{equation}
\end{proposition}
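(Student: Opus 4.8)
The plan is to prove the two conclusions separately: the metric inequality is derived from $(\mathcal{H})$ by a contradiction argument built on Ekeland's variational principle and the fuzzy sum rule, while the intersection formula \eqref{metricin} comes from the fuzzy sum rule for Fr\'echet subdifferentials in the Asplund space $X$ applied to indicator functions.

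For the metric inequality, I would argue by contradiction. If it fails, then for each $n\in\N$ there is $x_n\to\bar{x}$ with $x_n\notin\Omega_1\cap\Omega_2$ and $d(x_n,\Omega_1\cap\Omega_2)>n\,[d(x_n,\Omega_1)+d(x_n,\Omega_2)]$. Put $f(u):=d(u,\Omega_1)+d(u,\Omega_2)$, a nonnegative $2$-Lipschitz function with $\inf f=0$, and $r_n:=d(x_n,\Omega_1\cap\Omega_2)$. I would apply Ekeland's principle to $f$ with $\varepsilon_n:=f(x_n)$ and $\lambda_n:=r_n/2$, obtaining $z_n$ with $\|z_n-x_n\|\le r_n/2$ (so $z_n\to\bar{x}$), with $d(z_n,\Omega_1\cap\Omega_2)\ge r_n/2>0$ (so $z_n\notin\Omega_1\cap\Omega_2$ and $f(z_n)>0$), and such that $z_n$ minimizes $u\mapsto f(u)+\gamma_n\|u-z_n\|$ over $X$, where $\gamma_n:=2f(x_n)/r_n<2/n\to0$. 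Since $z_n$ is an unconstrained minimizer, $0\in\hat{\partial}\big(d(\cdot,\Omega_1)+d(\cdot,\Omega_2)+\gamma_n\|\cdot-z_n\|\big)(z_n)$, and all three summands are Lipschitz, so the basic fuzzy sum rule applies: for a tolerance $\eta_n\downarrow0$ I get $a_n,b_n\to\bar{x}$ with $d(a_n,\Omega_1)\to d(z_n,\Omega_1)$, $d(b_n,\Omega_2)\to d(z_n,\Omega_2)$, and $u_n^\star\in\hat{\partial}d(\cdot,\Omega_1)(a_n)$, $v_n^\star\in\hat{\partial}d(\cdot,\Omega_2)(b_n)$ with $\|u_n^\star+v_n^\star\|\le\gamma_n+\eta_n\to0$. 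Because $f(z_n)>0$, passing to a subsequence one distance dominates, say $d(z_n,\Omega_1)\ge f(z_n)/2$; choosing $\eta_n<f(z_n)/4$ forces $a_n\notin\Omega_1$, so the exterior-point property of the distance function gives $\|u_n^\star\|=1$, whence $\|v_n^\star\|\to1$. Using the standard representation of a Fr\'echet subgradient of a distance function at a point of positive distance as an approximate Fr\'echet normal at a near-projection, I replace $u_n^\star,v_n^\star$ by $x_{1n}^\star\in\hat{N}(a_n';\Omega_1)$ and $x_{2n}^\star\in\hat{N}(b_n';\Omega_2)$ with $a_n'\in\Omega_1,\ b_n'\in\Omega_2,\ a_n',b_n'\to\bar{x}$, still satisfying $\|x_{1n}^\star+x_{2n}^\star\|\to0$ yet $\|x_{2n}^\star\|\to1$. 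This is exactly the configuration forbidden by $(\mathcal{H})$, which yields the contradiction and proves the metric inequality.

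For the intersection formula, I would exploit that $\delta_{\Omega_1\cap\Omega_2}=\delta_{\Omega_1}+\delta_{\Omega_2}$, so $\hat{N}(x;\Omega_1\cap\Omega_2)=\hat{\partial}(\delta_{\Omega_1}+\delta_{\Omega_2})(x)$. Fixing $x^\star$ in this set and applying the fuzzy sum rule with tolerance $\varepsilon$ produces $x_1,x_2\in B(x,\varepsilon)$ with $\delta_{\Omega_i}(x_i)$ close to $0$, hence $x_i\in\Omega_i$, and $x_i^\star\in\hat{\partial}\delta_{\Omega_i}(x_i)=\hat{N}(x_i;\Omega_i)$ with $\|x^\star-x_1^\star-x_2^\star\|<\varepsilon$, which is precisely \eqref{metricin} (for $x\notin\Omega_1\cap\Omega_2$ the left side is empty and the inclusion is trivial, so any $r>0$ works). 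Alternatively, to match the phrasing ``under this assumption'', one may route this through the metric inequality: writing $x^\star\in\hat{N}(x;\Omega_1\cap\Omega_2)$ as an approximate local minimum of $-\langle x^\star,\cdot\rangle+\eta\|\cdot-x\|$ over $\Omega_1\cap\Omega_2$, Clarke's exact penalization together with $d(\cdot,\Omega_1\cap\Omega_2)\le\tau[d(\cdot,\Omega_1)+d(\cdot,\Omega_2)]$ replaces the constraint by the penalty $\ell\tau[d(\cdot,\Omega_1)+d(\cdot,\Omega_2)]$, after which the fuzzy sum rule delivers the same inclusion with controlled normals.

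The main obstacle I anticipate lies in the first part, namely the degeneracy built into the fuzzy sum rule: it only guarantees $\|u_n^\star+v_n^\star\|\to0$, which would be vacuous if both subgradients tended to $0$. The decisive point is therefore to certify that $z_n\notin\Omega_1\cap\Omega_2$ forces a genuinely nonvanishing normal; this is secured by the unit-norm property of the distance subgradient at exterior points, combined with the careful calibration of $\lambda_n$ in Ekeland's principle and of $\eta_n$ in the sum rule, which together guarantee $\|x_{2n}^\star\|\to1$ and thus the violation of $(\mathcal{H})$.
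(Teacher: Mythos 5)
The paper itself offers no proof of Proposition~\ref{Fuzzy}; it is stated as a known result with pointers to Ioffe and to Ngai--Th\'era \cite{RefNT1}, so your attempt has to be measured against the standard arguments in those references. Your treatment of the first conclusion matches them: the contradiction sequence, Ekeland's principle applied to $d(\cdot,\Omega_1)+d(\cdot,\Omega_2)$ with the calibration $\lambda_n=r_n/2$ (which keeps $z_n$ outside $\Omega_1\cap\Omega_2$ while driving the penalty slope $2f(x_n)/r_n$ to $0$), the fuzzy sum rule for a sum of Lipschitz functions, the fact that Fr\'echet subgradients of a distance function at out-of-set points have norm exactly one, and the passage from $\hat{\partial}d(\cdot,\Omega_i)$ to approximate Fr\'echet normals at near-projections are all correct and are exactly the ingredients of the cited proofs. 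That part is sound.

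The second conclusion is where there is a genuine gap, and it lies in the route you present as primary. The ``fuzzy sum rule applied to $\delta_{\Omega_1}+\delta_{\Omega_2}$'' is not an unconditional theorem: for two arbitrary lower semicontinuous functions on an Asplund space only the \emph{weak} fuzzy rule holds, in which the sum of the subgradients is small merely in a weak$^{\star}$ neighbourhood (equivalently, one only gets the degenerate conclusion $\lambda x^{\star}\in\hat{N}(x_1;\Omega_1)+\hat{N}(x_2;\Omega_2)+\varepsilon B_{X^\star}$ with possibly $\lambda=0$, as in Mordukhovich's fuzzy intersection lemma). The norm-accurate, non-degenerate version \eqref{metricin} is precisely what a qualification condition such as $(\mathcal{H})$ is designed to deliver; if your first argument were valid, \eqref{metricin} would hold for every pair of closed sets and the hypothesis of the proposition would be superfluous for that conclusion, which is false. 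The correct derivation is the one you relegate to an ``alternative'': take $x^{\star}\in\hat{N}(x;\Omega_1\cap\Omega_2)$, observe that $-\langle x^{\star},\cdot\rangle+\eta\|\cdot-x\|$ has a local minimum at $x$ over $\Omega_1\cap\Omega_2$, use Clarke exact penalization together with the metric inequality established in the first part to replace the constraint by the Lipschitz penalty $\ell\tau\big[d(\cdot,\Omega_1)+d(\cdot,\Omega_2)\big]$, and only then invoke the fuzzy sum rule (now legitimately, since every summand except possibly one is Lipschitz), finishing with the near-projection representation of distance subgradients. You should promote that penalization argument to the actual proof and drop the unqualified sum rule for indicators; with that correction, and with the understanding that the points $x_1,x_2$ in \eqref{metricin} are allowed to depend on the individual normal $x^{\star}$ (as in the cited sources), the proof is complete.
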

Let us consider two multifunctions $F, G: X\rightrightarrows Y$. To these multifunctions, we associate the two sets  $$C_1:=\{(x,y,z)\in X\times Y\times Y: y\in G(x)\} \;\text{and}\; C_2:=\{(x,y,z)\in X\times Y\times Y: z\in F(x)\}.$$ 
\begin{remark}
Hypothesis $(\mathcal{H})$ can be restated  for the sets $\{C_1,C_2\}$ at $(\bar{x},\bar{y},\bar{z})\in C_1\cap C_2$  as follows:
\item{(i)} $(\mathcal{H})$: for any sequences $$\{(x_{k}, y_{k})\}_{k\in\N} \subset \gph G,\{(v_{k}, z_{k})\}_{k\in\N} \subset \gph F,$$
 $$x^{\star}_k\in \hat{D}^{\star}G(x_{k}, y_{k})(y^{\star}_k),u^{\star}_k\in \hat{D}^{\star}F(v_{k}, z_{k})(z^{\star}_k),$$ such that if
 $$(x_{k}, y_{k})\to (\bar{x},\bar{y}),$$
 $$ (v_{k}, z_{k})\to (\bar{x},\bar{z}),$$
 $$ \Vert x^{\star}_k+u^{\star}_k\Vert\to 0,$$
 $$y^{\star}_k\to 0,z^{\star}_k\to 0,$$ then $$x^{\star}_k\to 0, u^{\star}_k\to 0, \text{as} \;k\to 0.$$ It  holds whenever one of following conditions is fulfilled: 
   \item{(ii)} $F^{-1}$ or $G^{-1}$ is pseudo-Lipschitz  around $(\bar{z},\bar{x})$  or $(\bar{y},\bar{x}),$ respectively;
 \item{(iii)} either $F$  is PSNC at $(\bar{x},\bar{z})$ or $G$ is PSNC at $(\bar{x},\bar{y}),$  and $$D^{\star}F(\bar{x},\bar{z})(0)\cap -D^{\star}G(\bar{x},\bar{y})(0)=\{0\}.$$
\end{remark}
\begin{proof} Observe that,   if $F^{-1}$ or $G^{-1}$ is pseudo-Lipschitz  around $(\bar{z},\bar{x})$ and $(\bar{y},\bar{x}),$ respectively,  then assumption  $(\mathcal{H})$ always holds (see for instance  \cite{Borisbook1}). \\ We now assume that (iii) holds. Take  $$\{(x_{k}, y_{k})\}_{k\in\N} \subset \gph G,\{(v_{k}, z_{k})\}_{k\in\N} \subset \gph F,$$
 $$x^{\star}_k\in \hat{D}^{\star}G(x_{k}, y_{k})(y^{\star}_k),u^{\star}_k\in \hat{D}^{\star}F(v_{k}, z_{k})(z^{\star}_k),$$ such that 
 $$(x_{k}, y_{k})\to (\bar{x},\bar{y}),$$
 $$ (v_{k}, z_{k})\to (\bar{x},\bar{z}),$$
 $$ \Vert x^{\star}_k+u^{\star}_k\Vert\to 0,$$
 $$y^{\star}_k\to 0,z^{\star}_k\to 0.$$ 
 If  the sequences $\{ x^{\star}_{k}\}_{k\in\N}$, $\{u^{\star}_{k}\}_{k\in\N}$  are unbounded,  we can assume that 
$$\Vert x^{\star}_{k}\Vert\to \infty, \Vert u^{\star}_{k}\Vert\to \infty,$$ and $$\frac{x^{\star}_{k}}{\Vert x^{\star}_{k}\Vert}\overset {w^{\star}} {\rightarrow}  x^{\star}, \frac{u^{\star}_{k}}{\Vert u^{\star}_{k}\Vert}\overset {w^{\star}} {\rightarrow}  u^{\star}.$$  
Then, $$y^{\star}_{k}/\Vert x^{\star}_{k}\Vert\to 0\; \text{and} \; z^{\star}_{k}/\Vert u^{\star}_{k}\Vert\to 0.$$ 
Consequently, $$x^{\star}\in D^{\star}G(\bar{x},\bar{y})(0), u^{\star}\in D^{\star}F(\bar{x},\bar{z})(0).$$
On the other hand, $$u^{\star}+x^{\star}=0, \;(\text{since}\; \Vert x^{\star}_{k}+u^{\star}_{k}\Vert\to 0).$$ 
It follows that $$u^{\star}\in D^{\star}F(\bar{x},\bar{z})(0)\cap -D^{\star}G(\bar{x},\bar{y})(0).$$
Therefore, by  assumption, this yields $x^{\star}=u^{\star}=0.$ \\
Hence, $$\frac{x^{\star}_{k}}{\Vert x^{\star}_{k}\Vert}\to 0, \quad \text{or}\quad \frac{u^{\star}_{k}}{\Vert u^{\star}_{k}\Vert}\to 0,\; \text{(by PSNC property of $F$ or  $G$).} $$ This contradicts the fact that $\frac{x^{\star}_{k}}{\Vert x^{\star}_{k}\Vert}$, and $\frac{u^{\star}_{k}}{\Vert u^{\star}_{k}\Vert}$ are in the unit sphere $S_{Y^{\star}}$ of $Y^{\star}.$
So, the sequences $\{ x^{\star}_{k}\}_{k\in\N}$, $\{u^{\star}_{k}\}_{k\in\N}$  are bounded. Without  any loss of generality, we can assume that  $$x^{\star}_{k} \overset {w^{\star}} {\rightarrow} x^{\star} , u^{\star}_{k}\overset {w^{\star}} {\rightarrow} u^{\star}.$$
It follows that $$x^{\star}\in D^{\star}G(\bar{x},\bar{y})(0), u^{\star}\in D^{\star}F(\bar{x},\bar{z})(0).$$ Moreover, $$x^{\star}+u^{\star}=0.$$
Hence, $$u^{\star}\in D^{\star}F(\bar{x},\bar z)(0)\cap -D^{\star}G(\bar{x}, \bar{y})(0).$$
Therefore, by assumption, we obtain  $x^{\star}=u^{\star}= 0, $ and $x^{\star}_{k}\to 0,\;\text{or,}\; u^{\star}_{k}\to 0,$ (by PSNC property of $F$ or $G$). The proof is complete. \qed 
\end{proof}
The following lemma gives an estimation
for the strong slope of the function $\varphi_{\mathcal{E}}((x,k),y).$
\begin{lemma} \label{Estim Slope} Let $(\bar x,\bar y-\bar k,\bar k )\in X\times F(\bar x)\times G(\bar{x})$ be given. Assume that the sets $\{C_1,C_2\}$ defined, as above,   satisfy  hypothesis  $(\mathcal{H})$ at $(\bar x,\bar k,\bar y-\bar k)$. Then there exists $\rho>0$ such that,  for all $(x,k,y)\in B((\bar x,\bar k,\bar y),\rho)$ with
$y\notin F(x)+k, k\in G(x)$ as well as $d(y,F(x)+k)<\rho,$ one has
\begin{equation}
|\nabla \varphi_{\mathcal{E}}((\cdot,\cdot),y)|(x,k)\ge\lim_{\delta\downarrow
0} \left\{\inf\left\{\|x^{\star}\|:\;
\begin{array}{l}
(u,w)\in\gph F,(v,z)\in\gph G,u,v\in B(x,\delta),\\
u^{\star}\in \hat{D}^{\star}G(v,z)(y^{\star}), \|y^{\star}\|=1,z\in B(k,\delta)\\
x^{\star}\in \hat{D}^{\star}F(u,w)(y^{\star}+z^{\star})+u^{\star},z^{\star}\in\delta B_{Y^{\star}},\\
 |\|w+k-y\|-\varphi_{\mathcal{E}}((x,k),y)|<\delta,\\
|\langle y^{\star}+z^{\star},w+k-y\rangle-\|w+k-y\||<\delta
\end{array}\right\}
 \right\}.\notag
\end{equation}
\end{lemma}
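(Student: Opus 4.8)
The plan is to fix $\rho>0$ small (its size dictated by Proposition~\ref{Fuzzy}) and, for a triple $(x,k,y)$ as in the statement, to set $s:=|\nabla\varphi_{\mathcal{E}}((\cdot,\cdot),y)|(x,k)$, assuming $s<+\infty$ (otherwise there is nothing to prove). Writing $m(\delta)$ for the infimum displayed on the right-hand side, one checks that $m(\delta)$ is nondecreasing as $\delta\downarrow 0$, so the asserted limit equals $\sup_{\delta>0}m(\delta)$ and it suffices to prove $m(\delta)\le s$ for each fixed $\delta>0$; that is, for each $\delta>0$ I must \emph{exhibit} one admissible configuration whose $x^\star$ satisfies $\|x^\star\|\le s+\delta$. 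The starting observation is that, by the very definition of the strong slope, for every $\varepsilon>0$ the point $(x,k)$ is a local minimizer of $(u,v)\mapsto\varphi_{\mathcal{E}}((u,v),y)+(s+\varepsilon)\,d((u,v),(x,k))$; moreover $y\notin F(x)+k$ together with Lemma~\ref{dame} gives $c:=\varphi_{\mathcal{E}}((x,k),y)\in\,]0,\rho[$, so that the relevant norms stay bounded away from $0$.

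Next I would transfer this minimality to the product space $X\times Y\times Y$ carrying the sets $C_1,C_2$ (whose intersection contains $(\bar x,\bar k,\bar y-\bar k)$). Using the first representation of $\varphi_{\mathcal{E}}$ in Lemma~\ref{tron}, pick $(\xi_n,a_n)\in\gph G$ with $(\xi_n,a_n)\to(x,k)$ and $b_n\in F(\xi_n)$ with $\|b_n+a_n-y\|\to c$; then $(\xi_n,a_n,b_n)\in C_1\cap C_2$ and, because $\|b+a-y\|\ge d(y-a,F(\xi))\ge\varphi_{\mathcal{E}}((\xi,a),y)$ on $\gph G$, the local minimality from the first step shows that $(\xi_n,a_n,b_n)$ is a minimizing sequence, with vanishing gap $\eta_n$, for the lower semicontinuous function
$$\Psi(\xi,a,b):=\|b+a-y\|+(s+\varepsilon)\,d((\xi,a),(x,k))+\delta_{C_1\cap C_2}(\xi,a,b),$$
whose infimum near $(\bar x,\bar k,\bar y-\bar k)$ equals $c$.

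I would then apply the Ekeland variational principle to $\Psi$ on the complete space $X\times Y\times Y$ with radius $\sqrt{\eta_n}$, producing a point $(\hat\xi,\hat a,\hat b)$ near $(\xi_n,a_n,b_n)$ that genuinely minimizes $\Psi+\sqrt{\eta_n}\,\|\cdot-(\hat\xi,\hat a,\hat b)\|$, whence $0\in\hat\partial[\Psi+\sqrt{\eta_n}\,\|\cdot-(\hat\xi,\hat a,\hat b)\|](\hat\xi,\hat a,\hat b)$. On this Fermat inclusion the Asplund fuzzy sum rule separates the convex Lipschitz norm $\|b+a-y\|$, the Lipschitz penalty, the Ekeland term and the indicator $\delta_{C_1\cap C_2}$; to the last one I apply the fuzzy intersection estimate~(\ref{metricin}) of Proposition~\ref{Fuzzy}. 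The norm contributes a single functional $y^\star$ with $\|y^\star\|=1$ and $\langle y^\star,\,w+k-y\rangle=\|w+k-y\|$ (after replacing $\hat a$ by $k$, legitimate up to $\delta$ since $\hat a\to k$), while~(\ref{metricin}) splits the normal to $C_1\cap C_2$ into a Fréchet normal to $C_1$ at some $(v,z)\in\gph G$ and a Fréchet normal to $C_2$ at some $(u,w)\in\gph F$, i.e. into coderivative terms $u^\star\in\hat D^\star G(v,z)(y^\star)$ and an element of $\hat D^\star F(u,w)(y^\star+z^\star)$ with $z^\star$ small. Reading off the $X$-component of the resulting near-zero sum, and using that the penalty subgradient has norm at most $s+\varepsilon$, yields $x^\star\in\hat D^\star F(u,w)(y^\star+z^\star)+u^\star$ with $\|x^\star\|\le s+\varepsilon+o(1)$; letting $n\to\infty$ and then $\varepsilon\to0$ produces, for the prescribed $\delta$, an admissible configuration with $\|x^\star\|\le s+\delta$, as required.

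The hard part is the dual bookkeeping in the last step, and it is what forces $\rho$ to be chosen small. First, the intersection rule~(\ref{metricin}) must be used not at $(\bar x,\bar k,\bar y-\bar k)$ but at the moving Ekeland points; this is exactly the service rendered by Proposition~\ref{Fuzzy}, which promotes the pointwise hypothesis $(\mathcal{H})$ to a uniform Fréchet-normal estimate valid throughout $B((\bar x,\bar k,\bar y-\bar k),\rho)$ --- but one must keep every auxiliary point (the sequence, the Ekeland point, and the two fuzzy base points $(u,w),(v,z)$) inside that ball, which couples the choices of $\rho$, $\varepsilon$ and $\delta$. Second, and more delicate, is to arrange that the functional driving the $G$-coderivative is \emph{exactly} the unit norm-subgradient $y^\star$: the norm $\|b+a-y\|$ injects $y^\star$ simultaneously into the $a$- and $b$-slots, and one must absorb the $Y$-component of the penalty subgradient together with the Ekeland and fuzzy errors into the slack $z^\star\in\delta B_{Y^\star}$ and into the two approximate equalities $|\,\|w+k-y\|-\varphi_{\mathcal{E}}((x,k),y)\,|<\delta$ and $|\langle y^\star+z^\star,\,w+k-y\rangle-\|w+k-y\|\,|<\delta$, while preserving $\|y^\star\|=1$ and $w+k-y\neq0$ (guaranteed by $c>0$). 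This accounting is the technical heart of the proof.
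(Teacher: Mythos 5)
Your plan follows the paper's own proof essentially step for step: pass from the strong slope to approximate minimality of $(\xi,a,b)\mapsto\|b+a-y\|+(s+\varepsilon)\,d((\xi,a),(x,k))$ penalized by the indicators of $C_1$ and $C_2$, apply Ekeland's variational principle, then the fuzzy sum rule and the intersection estimate of Proposition~\ref{Fuzzy} at the moving points, and finally normalize the dual elements so that the $G$-coderivative is driven by the unit functional $y^{\star}$ while all errors are absorbed into $z^{\star}$ and the two $\delta$-tolerances. The only point to tighten is that Ekeland's principle must be applied on a closed ball $\bar{B}(x,\eta)\times\bar{B}(k,\eta)\times Y$ on which the approximate minimality actually holds (not on all of $X\times Y\times Y$, where the infimum of your $\Psi$ need not equal $c$), with the Ekeland point checked to lie in the interior, exactly as the paper does.
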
 
\begin{proof}
Obviously, if $(\mathcal{H})$ is satisfied at $(\bar x,\bar k,\bar y-\bar k)$ then it is also satisfied at all points $(u,v,w)\in X\times G(u)\times F(u)$ near $(\bar x,\bar k,\bar y-\bar k),$ say $(u,v,w)\in X\times G(u)\times F(v)\cap B_{X\times Y\times Y}((\bar x,\bar k,\bar y-\bar k),3\rho).$
 Let $(x,k,y)\in B_{X\times Y\times Y}((\bar x,\bar k,\bar y-\bar k),\rho)$ be such that
$y\notin F(x)+k, k\in G(x)$ and $d(y, F(x)+k)<\rho.$ Set 
 $|\nabla \varphi_{\mathcal{E}}((\cdot,\cdot),y)|(x,k):=m.$ 
By the
lower semicontinuity of $\varphi_{\mathcal{E}}$ (Note that    $\varphi_{\mathcal{E}}$ is given by the first equality in Lemma \ref{tron})  as well as the definition of the
strong slope, for each $\varepsilon\in]0,\varphi_{\mathcal{E}}((x,k),y)[,$ there is
$\eta\in (0,\varepsilon)$ with $4\eta+\varepsilon<\varphi_{\mathcal{E}}((x,k),y)$ and $1-(m+\varepsilon+3)\eta >0$  such
that $d(y,F(u)+l)\geq \varphi_{\mathcal{E}}((x,k),y)-\varepsilon,\; \text{for all} \;u\in
B(x,4\eta),$ $ l\in B(k,\eta)\cap G(u)$ and
$$m+\varepsilon\geq \frac{\varphi_{\mathcal{E}}((x,k),y)-\varphi_{\mathcal{E}}((z,k'),y)}{\max\{\|x-z\|,\|k-k'\| 
\}}\quad \mbox{for all}\; z\in  \bar{B}(x,\eta), k'\in \bar{B}(k,\eta)\cap G(x).$$
Consequently,
$$\varphi_{\mathcal{E}}((x,k),y)\leq \varphi_{\mathcal{E}}((z,k'),y)+(m+\varepsilon)\|z-x\|+(m+\varepsilon)\|k-k'\|\quad \mbox{for all}\; z\in \bar{B}(x,\eta), k'\in \bar{B}(k,\eta)\cap G(x).$$
By the definition of $\varphi_{\mathcal{E}},$ take $u\in B(x,\eta^2/4), \ v\in F(u), l\in B(k,\eta^2/8)\cap G(u) $ such that $$\|y-l-v\|\leq
\varphi_{\mathcal{E}}((x,k),y)+\eta^2/8.$$ By this way,
$$ \|y-k-v\|\leq
\varphi_{\mathcal{E}}((x,k),y)+\eta^2/4. $$
 Taking into account that   $\varphi_{\mathcal{E}}((z,k'),y)\le d(y,F(z)+k')$ with $k'\in G(z)$, then\\   $\varphi_{\mathcal{E}}((z,k'),y)\le \Vert y-k'-w\Vert$ with $w\in F(z)$ and $k'\in G(z)$. It follows that $$\varphi_{\mathcal{E}}((z,k'),y)\le \Vert y-k'-w\Vert+\delta_{C_2}(z,k',w)+\delta_{C_1}(z,k',w).$$
From the inequality, $$\|y-k-v\| \leq \varphi_{\mathcal{E}}((z,k'),y)+(m+\varepsilon)\|z-x\|+\eta^2/4,$$ we obtain that $$\|y-k-v\|\leq \|y-k'-w\|+\delta_{C_2}(z,k',w)+\delta_{C_1}(z,k',w)+(m+\varepsilon)\|z-u\|+(m+\varepsilon)\eta+\eta^2/4,$$ 
$ \text{for all} \;(z,w)\in \bar{ B}(x,\eta)\times Y, k'\in \bar{ B}(k,\eta).$ %
Applying the Ekeland variational principle \cite{RefE} to the function
$$(z,k',w)\mapsto \|y-k'-w\|+\delta_{C_2}(z,k',w)+\delta_{C_1}(z,k',w)+(m+\varepsilon)\|z-u\|$$
on $\bar{B}(x,\eta)\times  \bar{ B}(k,\eta)\times Y,$ we can select $(u_1,k_1,w_1)\in
(u,k,v)+\frac{\eta}{4} B_{X\times Y\times Y}$ with $(u_1,k_1,w_1)\in C_2\cap C_1$
such that \begin{equation}\label{estim 1}\|y-k_1-w_1\|\leq\|y-k-v\| (\leq
\varphi_{\mathcal{E}}((x,k),y)+\eta^2/4);\end{equation} and the function
$$(z,k',w)\mapsto\|y-k'-w\|+\delta_{C_2}(z,k',w)+\delta_{C_1}(z,k',w)+(m+\varepsilon)\|z-u\|+(m+\varepsilon+1)\eta\|(z,k',w)-(u_1,v_1,w_1)\|$$
attains a minimum on $\bar{ B}(x,\eta)\times\bar{ B}(k,\eta)\times Y$ at $(u_1,k_1,w_1).$ Hence, using the sum rule for Fr\'echet subdifferentials, we can find $$(u_2,k_2,w_2), (u_4,k_4,w_4)\in B_{X\times Y\times Y}((u_1,k_1,w_1),\eta);\;(u_3,k_3,w_3)\in
B_{X\times
Y\times Y}((u_1,k_1,w_1),\eta)\cap C_2\cap C_1;$$
such that 
\begin{align*}
(0,k_{2}^{\star},w_{2}^{\star})  &  \in\hat{\partial}\Vert y-\cdot-\cdot\Vert
(u_{2},k_{2},w_{2}),\\
(u_{3}^{\star},k_{3}^{\star},w_{3}^{\star})  &  \in \hat{\partial}%
(\delta_{C_2}(\cdot,\cdot,\cdot)+\delta_{C_1}(\cdot,\cdot,\cdot))(u_{3},k_{3},w_{3}),\\
(u_{4}^{\star},0,0)  &  \in\hat{\partial}((m+\varepsilon)\Vert\cdot-u\Vert
)(u_{4},k_{4},w_{4})
\end{align*}
and
\begin{equation}
(0,0,0)\in(0,k_{2}^{\star},w_{2}^{\star})+(u_{3}^{\star},k_{3}^{\star},w_{3}%
^{\star})+(u_{4}^{\star},0,0)+(m+\varepsilon+2)\eta\lbrack \bar B_{X^{\star}}\times
\bar B_{Y^{\star}}\times \bar B_{Y^{\star}}]. \label{fermat}%
\end{equation}
Note  that
\begin{align}
\Vert y-k_{2}-w_{2}\Vert &  \geq\Vert y-v-k\Vert-\Vert w_{2}-v\Vert-\Vert
k_{2}-k\Vert\label{ineg}\\
&  \geq\varphi_{\mathcal{E}}((x,k),y)-\varepsilon-(\Vert w_{2}%
-w_{1}\Vert+\Vert w_{1}-v\Vert)-(\Vert k_{2}-k_{1}\Vert+\Vert k-k_{1}%
\Vert)\nonumber\\
&  >\varphi_{\mathcal{E}}((x,k),y)-\varepsilon-2\eta-2\eta
=\varphi_{\mathcal{E}}((x,k),y)-\varepsilon-4\eta>0.\nonumber
\end{align}
Then, by \cite[Theorem 2.8.3]{Zal2002} (see, also  \cite [proof of Theorem
3.6]{DurStr1}), we know that 
\[
\hat{\partial}\Vert y-\cdot-\cdot\Vert(u_{2},k_{2},w_{2})=\{(0,y^{\star},y^{\star
}):y^{\star}\in S_{Y^{\star}},\langle y^{\star},w_{2}+k_{2}-y\rangle=\Vert
y-w_{2}-k_{2}\Vert\}.
\]
Hence,%
\begin{equation}\label{bis}
w_{2}^{\star}=k_{2}^{\star}\in S_{Y^{\star}}\text{ and }\langle w_{2}^{\star
},w_{2}+k_{2}-y\rangle=\Vert y-w_{2}-k_{2}\Vert.
\end{equation}
Now,  in order to have  $(u_3,k_3,w_3)\in B_{X\times Y\times Y}((\bar x,\bar k,\bar y-\bar k),3\rho),$  we take  $\eta$ smaller if necessary,  and,  by virtue of Proposition \ref{Fuzzy},   one has
\begin{equation}
(u_{3}^{\star},k_{3}^{\star},w_{3}^{\star}) \in \hat{N}((u_{5},k_{5},w_{5}); C_2)+
\hat{N}((u_{6},k_{6},w_{6}); C_1)+\eta\lbrack \bar B_{X^{\star}}\times
\bar B_{Y^{\star}}\times \bar B_{Y^{\star}}], \label{fuzzy}
\end{equation}
 where $(u_{5},k_{5},w_{5})\in C_2\cap B_{X\times
Y\times Y}((u_3,k_3,w_3),\eta), (u_{6},k_{6},w_{6})\in C_1\cap B_{X\times
Y\times Y}((u_3,k_3,w_3),\eta).$
From (\ref{fermat}) and (\ref{fuzzy}), one deduces that
$$
(0,0,0)\in(0,k_{2}^{\star},w_{2}^{\star})+ \hat{N}((u_{5},k_{5},w_{5}); C_2)+$$
$$\hat{N}((u_{6},k_{6},w_{6}); C_1)+(u_{4}^{\star},0,0)+(m+\varepsilon+3)\eta\lbrack \bar B_{X^{\star}}\times\bar B_{Y^{\star}}\times \bar B_{Y^{\star}}]. $$
Therefore, there exist $(u_{5}^{\star},k_{5}^{\star},w_{5}^{\star})\in \lbrack \bar B_{X^{\star}}\times\bar B_{Y^{\star}}\times \bar B_{Y^{\star}}], (u_{6}^{\star},k_{6}^{\star},0)\in \hat{N}((u_{6},k_{6},w_{6}); C_1)$, i.e., $u_{6}^{\star}\in \hat{D}^{\star}G(u_{6},k_{6})(-k_{6}^{\star})$ such that
$$(-u_{4}^{\star}-(m+\varepsilon+3)\eta u_{5}^{\star}-u_{6}^{\star},-k_{2}^{\star}-(m+\varepsilon+3)\eta k_{5}^{\star}-k_{6}^{\star}, -w_{2}^{\star}-(m+\varepsilon+3)\eta w_{5}^{\star})\in \hat{N}((u_{5},k_{5},w_{5}); C_2).$$
It follows that $$-k_{2}^{\star}-(m+\varepsilon+3)\eta k_{5}^{\star}-k_{6}^{\star}=0,$$ and $$(-u_{4}^{\star}-(m+\varepsilon+3)\eta u_{5}^{\star}-u_{6}^{\star}, -w_{2}^{\star}-(m+\varepsilon+3)\eta w_{5}^{\star})\in \hat{N}((u_{5},w_{5}); \gph F).$$
Consequently,
$$-k_{6}^{\star}=k_{2}^{\star}+(m+\varepsilon+3)\eta k_{5}^{\star} \; \text{and} \;(-u_{4}^{\star}-(m+\varepsilon+3)\eta u_{5}^{\star}-u_{6}^{\star})\in \hat{D}^{\star}F(u_{5},w_{5})(w_{2}^{\star}+(m+\varepsilon+3)\eta w_{5}^{\star}).$$
Remark that $\|k_{6}^{\star} \|= \|-k_{2}^{\star}-(m+\varepsilon+3)\eta k_{5}^{\star} \|\geq 1-(m+\varepsilon+3)\eta>0.$ \\\
Hence, setting
 \begin{align*}
&y^{\star}:=(k_{2}^{\star}+(m+\varepsilon+3)\eta k_{5}^{\star})/\|k_{2}^{\star}+(m+\varepsilon+3)\eta k_{5}^{\star}\|,\\
&z^{\star}:=(w_{5}^{\star}-k_{5}^{\star})(m+\varepsilon+3)\eta /\|k_{2}^{\star}+(m+\varepsilon+3)\eta k_{5}^{\star}\|,\\
&x_{1}^{\star}:=u_{6}^{\star}/\|k_{2}^{\star}+(m+\varepsilon+3)\eta k_{5}^{\star}\|,\\
&x_{2}^{\star}:=(-u_{4}^{\star}-(m+\varepsilon+3)\eta u_{5}^{\star})/\|k_{2}^{\star}+(m+\varepsilon+3)\eta k_{5}^{\star}\|,\\
 \end{align*}
one obtains that 
\begin{equation}
x_{1}^{\star}\in \hat{D}^{\star}G(u_{6},k_{6})(y^{\star}) \;\text{and}\; (x_{2}^{\star}-x_{1}^{\star})\in \hat{D}^{\star}F(u_{5},w_{5})(y^{\star}+z^{\star}),\label{coderivative}
\end{equation}
where
\begin{equation}
\|y^{\star}\|=1,\\
\|z^{\star}\|\le \frac{2(m+\varepsilon+3)\eta}{1-(m+\varepsilon+3)\eta}:=\delta,\\
\|x_{2}^{\star} \|\le \frac{m+\varepsilon+(m+\varepsilon+3)\eta}{1-(m+\varepsilon+3)\eta}.
\end{equation}
 On the other hand,  since $k_1\in B(k,\eta), w_5\in B(w_1,2\eta),$  according to relation (\ref{estim 1}) one has 
 \begin{equation}\label{varphi}
 \end{equation}
 $$
\begin{array}{lll}
\varphi_{\mathcal{E}}((x,k),y)-\varepsilon-3\eta\\
&\le  \|y-k_1-w_1 \|-\|w_5-w_1 \| - \|k_1-k \|
&\le \|y-k-w_{5} \|\\& \le \|y-k_1-w_1 \| + \|w_5-w_1 \| + \|k_1-k \| \\
&\le \|y-k-v \|+3\eta \\
&\le \varphi_{\mathcal{E}}((x,k),y)+\eta^{2}/4+3\eta.
\end{array}
 $$
Consequently,
\begin{equation}\label{rela-bis} |\|y-k-w_{5} \|-\varphi_{\mathcal{E}}((x,k),y)| \le 3\eta+\varepsilon.
\end{equation}
On the other hand, one has
\begin{equation}\label{Estim 5} 
\langle y^{\star}+z^{\star},y-k-w_5\rangle-\|y-k-w_{5} \|
\le \delta \|y-k-w_{5} \|;\end{equation} 
and, by $k_2\in B(k,\eta);$ $w_2,w_5\in B(w_1,2\eta),$ from (\ref{bis}), one has the following estimates 
\begin{equation}\label{rela2-bis}
\begin{array}{ll}
&\langle y^{\star}+z^{\star},{k}+w_5-y\rangle\\
&=\displaystyle\frac{\langle w_{2}^{\star}+(m+\varepsilon+3)\eta w_{5}^{\star},k+w_5-y\rangle}{\|k_{2}^{\star}+(m+\varepsilon+3)\eta k_{5}^{\star}\|}\\
&=\displaystyle\frac{\langle w^{\star}_2,w_2+k_2-y\rangle+\langle
w^{\star}_2,w_5-w_2\rangle+\langle
w^{\star}_2,{k}-k_2\rangle+(m+\varepsilon+3)\eta\langle
w^{\star}_5,w_5+{k}-y\rangle}{\|k_{2}^{\star}+(m+\varepsilon+3)\eta k_{5}^{\star}\|}\\
&\geq\displaystyle\frac{\|w_2+k_2-y\|-3\eta-2\eta-(m+\varepsilon+3)\eta\| w_5+{k}-y\|}{(1+(m+\varepsilon+3)\eta)},\\
&\geq\displaystyle\frac{\|w_5+k-y\|(1-(m+\varepsilon+3)\eta)-8\eta}{(1+(m+\varepsilon+3)\eta)}
 \end{array}
\end{equation}
As $\varepsilon,\eta>0$ are arbitrary small, by combining relations
(\ref{coderivative})-(\ref{rela2-bis}), we complete the proof.
\end{proof} 
\begin{theorem} \label{Slope1} Let $X, Y$ be Asplund
spaces,  and let $F, G:X\rightrightarrows Y$ be  closed multifunctions. Suppose that  $(\bar{x},\bar{k},\bar{y})\in X\times Y\times Y$  be  such that $\bar{y}\in F(\bar{x})+\bar{k}, \bar{k}\in G(\bar{x})$ and  the sets $\{C_1,C_2\}$ satisfy the hypothesis  $(\mathcal{H})$ at $(\bar{x},\bar{k},\bar{y}-\bar{k}).$ Let  $m>0$. If there exist a neighborhood $\mathcal{U}\times \mathcal{V}\times \mathcal{W}$ of $(\bar{x},\bar{k},\bar{y})$ and $\gamma>0$ such that,  for each $(x,y,k)\in  \mathcal{U}\times \mathcal{V} \times \mathcal{W}$ with $y\notin F(x)+k, k\in G(x),$
$$m\le \lim_{\delta\downarrow
0} \left\{\inf\left\{\|x^{\star}\|:\;
\begin{array}{l}
(u,w)\in\gph F,(v,z)\in\gph G, u,v\in B(x,\delta),\\
u^{\star}\in \hat{D}^{\star}G(v,z)(y^{\star}), \|y^{\star}\|=1,z\in B(k,\delta),\\
x^{\star}\in \hat{D}^{\star}F(u,w)(y^{\star}+z^{\star})+u^{\star},z^{\star}\in\delta B_{Y^{\star},}\\
 \|w+k-y\|\le \gamma+\delta,\\
|\langle y^{\star}+z^{\star},w+k-y\rangle-\|w+k-y\||<\delta
\end{array}\right\} \right\},$$
then there exists a neighborhood $\mathcal{U}_1\times \mathcal{V}_1\times \mathcal{W}_1$ of $(\bar x,\bar k,\bar y)$ such that
$$ md((x,k),\S_{\mathcal{E}_{(F,G)}}(y))\leq \varphi_{\mathcal{E}}((x,k),y)\quad \mbox{for all}\quad (x,k,y)\in \mathcal{U}_1\times \mathcal{V}_1\times \mathcal{W}_1.$$
\end{theorem}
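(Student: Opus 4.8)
The plan is to assemble the two preparatory results already established, namely the slope estimate of Lemma \ref{Estim Slope} and the slope-based error-bound characterization of Lemma \ref{strophi}, observing that the coderivative infimum appearing in the hypothesis of the theorem is \emph{exactly} the lower bound for the strong slope $|\nabla\varphi_{\mathcal{E}}((\cdot,\cdot),y)|(x,k)$ furnished by Lemma \ref{Estim Slope}. Thus no new variational machinery is needed: the entire analytic difficulty has already been absorbed into Lemma \ref{Estim Slope}, whose proof relied on hypothesis $(\mathcal{H})$, the fuzzy calculus of Proposition \ref{Fuzzy}, and the Ekeland variational principle.

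First I would fix a radius $\rho>0$ as provided by Lemma \ref{Estim Slope} (legitimate since $(\mathcal{H})$ is assumed at $(\bar x,\bar k,\bar y-\bar k)$), shrink the neighborhood $\mathcal{U}\times\mathcal{V}\times\mathcal{W}$ of the hypothesis into $B((\bar x,\bar k,\bar y),\rho)$, and choose the error-bound radius $\gamma$ no larger than $\rho$. Fix any $(x,k,y)$ in this neighborhood with $k\in G(x)$, $y\notin F(x)+k$ and $\varphi_{\mathcal{E}}((x,k),y)\in\,]0,\gamma[$; by Lemma \ref{dame} these are precisely the points lying off the solution set $\S_{\mathcal{E}_{(F,G)}}(y)$ at which a slope bound is required. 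For such a point Lemma \ref{Estim Slope} bounds $|\nabla\varphi_{\mathcal{E}}((\cdot,\cdot),y)|(x,k)$ from below by $\lim_{\delta\downarrow 0}$ of an infimum of $\|x^{\star}\|$ over the admissible set carrying the constraint $|\,\|w+k-y\|-\varphi_{\mathcal{E}}((x,k),y)\,|<\delta$.

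The key comparison is the following: since $\varphi_{\mathcal{E}}((x,k),y)<\gamma$, the constraint $|\,\|w+k-y\|-\varphi_{\mathcal{E}}((x,k),y)\,|<\delta$ forces $\|w+k-y\|<\gamma+\delta$, so the admissible set in Lemma \ref{Estim Slope} is contained in the admissible set of the hypothesis (which differs only in carrying the weaker restriction $\|w+k-y\|\le\gamma+\delta$). Taking infima over the smaller set can only increase them, whence, letting $\delta\downarrow 0$,
\[
|\nabla\varphi_{\mathcal{E}}((\cdot,\cdot),y)|(x,k)\;\ge\;\lim_{\delta\downarrow 0}\inf\{\,\|x^{\star}\|:\ \text{Lemma \ref{Estim Slope} constraints}\,\}\;\ge\;m .
\]
This is exactly condition (iii) of Lemma \ref{strophi} with $\tau:=1/m$ (so that $1/\tau=m$), valid for every $(x,k,y)$ in the chosen neighborhood with $\varphi_{\mathcal{E}}((x,k),y)\in\,]0,\gamma[$. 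Invoking the equivalence $(i)\Leftrightarrow(iii)$ of Lemma \ref{strophi} then produces a neighborhood $\mathcal{U}_1\times\mathcal{V}_1\times\mathcal{W}_1$ on which $d((x,k),\S_{\mathcal{E}_{(F,G)}}(y))\le\tau\,\varphi_{\mathcal{E}}((x,k),y)$, i.e. $m\,d((x,k),\S_{\mathcal{E}_{(F,G)}}(y))\le\varphi_{\mathcal{E}}((x,k),y)$, which is the assertion.

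The step I expect to demand the most care is the neighborhood bookkeeping. Lemma \ref{Estim Slope} carries the side condition $d(y,F(x)+k)<\rho$, whereas the characterization of Lemma \ref{strophi} quantifies over points with $\varphi_{\mathcal{E}}((x,k),y)<\gamma$, and in general only $\varphi_{\mathcal{E}}((x,k),y)=\liminf_{u\to x}d(y,F(u)+k)\le d(y,F(x)+k)$ holds. One must therefore arrange, by shrinking $\gamma$ and the neighborhood and by exploiting the lower semicontinuity of $\varphi_{\mathcal{E}}$ together with its vanishing at $(\bar x,\bar k,\bar y)$, that every point entering the argument genuinely satisfies the hypotheses of Lemma \ref{Estim Slope}. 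Once this compatibility is secured, the remaining verifications — the nesting of the two constraint sets and the bookkeeping $1/\tau=m$ — are routine.
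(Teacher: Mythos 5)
Your proposal is correct and is precisely the paper's (implicit) argument: the paper gives no separate proof of Theorem \ref{Slope1}, stating it immediately after Lemma \ref{Estim Slope} as the direct combination of that slope estimate with the equivalence $(iii)\Leftrightarrow(i)$ of Lemma \ref{strophi}, with $\tau=1/m$ and the constraint-set nesting you describe. On the one point you flag, note that shrinking neighborhoods and invoking lower semicontinuity of $\varphi_{\mathcal{E}}$ cannot by itself force $d(y,F(x)+k)<\rho$ at every point with $\varphi_{\mathcal{E}}((x,k),y)\in\,]0,\gamma[$ (since $\varphi_{\mathcal{E}}\le d(y,F(x)+k)$ with possibly strict inequality); the clean resolution is that the proof of Lemma \ref{Estim Slope} only uses smallness of $\varphi_{\mathcal{E}}((x,k),y)$ (via the choice of $u$, $v$, $l$ with $\|y-l-v\|\le\varphi_{\mathcal{E}}((x,k),y)+\eta^2/8$) to keep the Ekeland points inside the region where $(\mathcal{H})$ holds, so its conclusion is valid under the condition $\varphi_{\mathcal{E}}((x,k),y)<\rho$, which is exactly what taking $\gamma\le\rho$ in Lemma \ref{strophi}$(iii)$ provides.
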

This theorem implies  the following result:
\begin{theorem} \label{Limtheo} Let $X, Y$ be Asplund
spaces,  and let $F, G:X\rightrightarrows Y$ be  closed multifunctions, and let  $(\bar{x},\bar{k},\bar{y})\in X\times Y\times Y$ be such that $\bar{y}\in F(\bar{x})+\bar{k}, \bar{k}\in G(\bar{x})$. Let  $m>0$. If  the sets $\{C_1,C_2\}$ satisfy the hypothesis  $(\mathcal{H})$ at $(\bar{x},\bar{k},\bar{y}-\bar{k})$ and
\begin{equation}
m<\liminf_{{(x_1,w)\overset {F} {\rightarrow}(\bar{x},\bar{y}-\bar{k})},{(x_2,z)\overset {G} {\rightarrow}(\bar{x},\bar{k})}, \delta \downarrow 0}   \left\{\|x^{\star}\|:\;
\begin{array}{l}
x^{\star}\in \hat{D}^{\star}F(x_1,w)(y^{\star}+\delta B_{Y^\star})+u^{\star}\\
u^{\star}\in \hat{D}^{\star}G(x_2,z)(y^{\star}), \|y^{\star}\|=1,\\
\end{array}\right\} 
\end{equation}
where the notations ${(x_1,w)\overset {F} {\rightarrow}(\bar{x},\bar{y}-\bar{k})},$  ${(x_2,z)\overset {G} {\rightarrow}(\bar{x},\bar{k})}$ mean that $$(x_1,w)\to(\bar{x},\bar{y}-\bar{k}),(x_2,z)\to(\bar{x},\bar{k}) \;\text{and}\; (x_1,w)\in\gph F, (x_2,z)\in\gph G,$$ then there exists a neighborhood $\mathcal{U}_1\times \mathcal{V}_1\times \mathcal{W}_1$ of $(\bar x,\bar k,\bar y)$ such that
$$ md((x,k),\S_{\mathcal{E}_{(F,G)}}(y))\leq \varphi_{\mathcal{E}}((x,k),y)\quad \mbox{for all}\quad (x,k,y)\in U_1\times V_1\times W_1.$$
\end{theorem}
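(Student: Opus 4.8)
The plan is to deduce the hypothesis of Theorem \ref{Slope1} from the point-based coderivative condition and then invoke Theorem \ref{Slope1} directly; hypothesis $(\mathcal{H})$ is carried along unchanged, since it is already required by Theorem \ref{Slope1} through the slope estimate of Lemma \ref{Estim Slope}. Write $L$ for the value of the liminf on the right-hand side of the displayed inequality. Since $m<L$, I would first fix a number $m'$ with $m<m'<L$ and unpack the definition of the liminf: there is $\varepsilon_0>0$ such that for every $(x_1,w)\in\gph F$ with $\|(x_1,w)-(\bar{x},\bar{y}-\bar{k})\|<\varepsilon_0$, every $(x_2,z)\in\gph G$ with $\|(x_2,z)-(\bar{x},\bar{k})\|<\varepsilon_0$, every $\delta\in(0,\varepsilon_0)$, and every triple $(x^{\star},u^{\star},y^{\star})$ with $\|y^{\star}\|=1$, $u^{\star}\in\hat{D}^{\star}G(x_2,z)(y^{\star})$ and $x^{\star}\in\hat{D}^{\star}F(x_1,w)(y^{\star}+\delta B_{Y^{\star}})+u^{\star}$, one has $\|x^{\star}\|\ge m'$.

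Next I would choose the neighborhood $\mathcal{U}\times\mathcal{V}\times\mathcal{W}$ of $(\bar{x},\bar{k},\bar{y})$ and the constant $\gamma>0$ small enough so that the localization built into the infimum of Theorem \ref{Slope1} forces the feasible base points into the $\varepsilon_0$-balls above. Concretely, fix $(x,k,y)\in\mathcal{U}\times\mathcal{V}\times\mathcal{W}$ with $y\notin F(x)+k$, $k\in G(x)$, and let $(u,w)\in\gph F$, $(v,z)\in\gph G$, $y^{\star}$, $z^{\star}$ be feasible for that infimum at scale $\delta$. Then $u,v\in B(x,\delta)$ and $z\in B(k,\delta)$ immediately place $u,v$ near $\bar{x}$ and $z$ near $\bar{k}$, while the constraint $\|w+k-y\|\le\gamma+\delta$ together with
\[
\|w-(\bar{y}-\bar{k})\|\le\|w+k-y\|+\|y-\bar{y}\|+\|k-\bar{k}\|
\]
places $w$ near $\bar{y}-\bar{k}$. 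Taking all the neighborhood radii, as well as $\gamma$ and the threshold on $\delta$, smaller than $\varepsilon_0/4$ guarantees $\|(u,w)-(\bar{x},\bar{y}-\bar{k})\|<\varepsilon_0$ and $\|(v,z)-(\bar{x},\bar{k})\|<\varepsilon_0$. I expect this bookkeeping to be the main obstacle: the variable $w$ is controlled only through the residual $y-k-w$, so it is essential to shrink $\gamma$ itself, and not merely the neighborhood of $(x,k,y)$, in order to trap $w$ near $\bar{y}-\bar{k}$.

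Finally, I would match the two feasible sets. Since $z^{\star}\in\delta B_{Y^{\star}}$, the inclusion $x^{\star}\in\hat{D}^{\star}F(u,w)(y^{\star}+z^{\star})+u^{\star}$ with $u^{\star}\in\hat{D}^{\star}G(v,z)(y^{\star})$ is a special case of $x^{\star}\in\hat{D}^{\star}F(u,w)(y^{\star}+\delta B_{Y^{\star}})+u^{\star}$; moreover the extra requirement $|\langle y^{\star}+z^{\star},w+k-y\rangle-\|w+k-y\||<\delta$ appearing in Theorem \ref{Slope1} only shrinks its feasible set and may therefore be dropped when bounding the infimum from below. Consequently every feasible $x^{\star}$ in the infimum of Theorem \ref{Slope1} also satisfies the constraints of the point-based problem within the $\varepsilon_0$-balls, whence $\|x^{\star}\|\ge m'$. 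Taking the infimum and then the limit as $\delta\downarrow 0$ yields $m<m'\le\lim_{\delta\downarrow 0}\{\inf\{\cdots\}\}$, so the hypothesis of Theorem \ref{Slope1} holds with this $m$, this $\gamma$, and this neighborhood. Applying Theorem \ref{Slope1} then produces a neighborhood $\mathcal{U}_1\times\mathcal{V}_1\times\mathcal{W}_1$ of $(\bar{x},\bar{k},\bar{y})$ on which $m\,d((x,k),\S_{\mathcal{E}_{(F,G)}}(y))\le\varphi_{\mathcal{E}}((x,k),y)$, which is exactly the desired conclusion.
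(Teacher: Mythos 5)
Your proposal is correct and follows exactly the route the paper intends: Theorem \ref{Limtheo} is stated as an immediate consequence of Theorem \ref{Slope1}, and you have simply written out the localization and feasible-set comparison that this implication requires (in particular, correctly noting that $w$ is only controlled through the residual $y-k-w$, so $\gamma$ itself must be shrunk). No gaps.
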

The next result gives a point-based condition for metric regularity of the epigraphical multifunction.
 \begin{theorem} \label{basedpoint} Let $X, Y$ be Asplund
spaces,  and let $F, G:X\rightrightarrows Y$ be  closed multifunctions, and let  $(\bar{x},\bar{k},\bar{y})\in X\times Y\times Y$ be such that $\bar{y}\in F(\bar{x})+\bar{k}, \bar{k}\in G(\bar{x})$. Suppose that   
\item{(i)} $F$ or $G$ is PSNC at $(\bar{x},\bar{y}-\bar{k})$ and $(\bar{x},\bar{k}),$ respectively;
\item{(ii)} $D^{\star}F(\bar{x},\bar{y}-\bar{k})(0)\cap -D^{\star}G(\bar{x},\bar{k})(0)=\{0\};$
\item{(iii)} for any $u^{\star}_{n}\in \hat{D}^{\star}F(x_{n},y_{n}-k_{n})(y^{\star}_{n}+(1/n) B_{Y^{\star}}), v^{\star}_{n}\in \hat{D}^{\star}G(x_n,k_n)(y^{\star}_{n})$ such that $$\Vert u^{\star}_{n}+v^{\star}_{n}\Vert\to 0, y^{\star}_{n}\overset {w^{\star}} {\rightarrow} 0\; \text{it follows that } \; y^{\star}_{n}\to 0;$$
Under the condition that 
\begin{equation}\label{star?}\quad\quad \text{Ker}\big(D^{\star}F(\bar{x},\bar{y}-\bar{k})+D^{\star}G(\bar{x},\bar{k})\big)=\{0\},\end{equation}
 the multifunction $ \mathcal{E}_{(F,G)}$ is metrically regular around $(\bar{x},\bar{k},\bar{y}).$
\end{theorem}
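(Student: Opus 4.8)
The plan is to derive the conclusion from Theorem \ref{Limtheo}. The verification splits into two parts: first, that hypothesis $(\mathcal{H})$ for the pair $\{C_1,C_2\}$ holds at $(\bar x,\bar k,\bar y-\bar k)$; and second, that the coderivative liminf appearing in Theorem \ref{Limtheo} is strictly positive. The first part is immediate, since conditions (i) and (ii) are precisely the sufficient condition for $(\mathcal{H})$ recorded in the Remark preceding Lemma \ref{Estim Slope}, so $(\mathcal{H})$ holds. Granting the second part, one picks any $m>0$ below that liminf, applies Theorem \ref{Limtheo} to obtain $m\,d((x,k),\S_{\mathcal{E}_{(F,G)}}(y))\le \varphi_{\mathcal{E}}((x,k),y)$ on a neighborhood of $(\bar x,\bar k,\bar y)$, and since $\varphi_{\mathcal{E}}((x,k),y)\le d(y,\mathcal{E}_{(F,G)}(x,k))$ and $\S_{\mathcal{E}_{(F,G)}}(y)=\mathcal{E}_{(F,G)}^{-1}(y)$, this is exactly metric regularity of $\mathcal{E}_{(F,G)}$ around $(\bar x,\bar k,\bar y)$ with modulus $1/m$.

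So the real work is to show, arguing by contradiction, that the liminf is positive. If it vanished, there would exist $(x_{1n},w_n)\overset{F}{\to}(\bar x,\bar y-\bar k)$, $(x_{2n},z_n)\overset{G}{\to}(\bar x,\bar k)$, scalars $\|y^\star_n\|=1$, perturbations $z^\star_n\to 0$, and elements $a^\star_n\in\hat D^\star F(x_{1n},w_n)(y^\star_n+z^\star_n)$, $u^\star_n\in\hat D^\star G(x_{2n},z_n)(y^\star_n)$ with $\|a^\star_n+u^\star_n\|\to 0$. Passing to a subsequence, I may assume $y^\star_n\overset{w^\star}{\to}y^\star$. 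Note $|\,\|a^\star_n\|-\|u^\star_n\|\,|\le\|a^\star_n+u^\star_n\|\to 0$, so the two sequences are simultaneously bounded or unbounded.

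The main obstacle is to rule out the unbounded alternative. Set $s_n:=\max\{\|a^\star_n\|,\|u^\star_n\|\}$ and suppose $s_n\to\infty$ along a subsequence. By positive homogeneity of the Fr\'echet coderivative, $a^\star_n/s_n\in\hat D^\star F(x_{1n},w_n)\big((y^\star_n+z^\star_n)/s_n\big)$ and $u^\star_n/s_n\in\hat D^\star G(x_{2n},z_n)(y^\star_n/s_n)$, where both arguments tend to $0$ in norm. Extracting weak$^\star$ limits $a^\star_n/s_n\overset{w^\star}{\to}\alpha$ and $u^\star_n/s_n\overset{w^\star}{\to}\beta$, the defining property of the limiting coderivative gives $\alpha\in D^\star F(\bar x,\bar y-\bar k)(0)$ and $\beta\in D^\star G(\bar x,\bar k)(0)$, while $\alpha+\beta=0$ follows from $\|a^\star_n+u^\star_n\|\to 0$. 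Hence $\alpha\in D^\star F(\bar x,\bar y-\bar k)(0)\cap -D^\star G(\bar x,\bar k)(0)=\{0\}$ by (ii), so $\alpha=\beta=0$. Now the PSNC property of $F$ (or, symmetrically, of $G$) in (i) forces $\|a^\star_n/s_n\|\to 0$ (resp. $\|u^\star_n/s_n\|\to 0$); combined with $a^\star_n/s_n+u^\star_n/s_n\to 0$ in norm, this drives both normalized sequences to $0$, contradicting $s_n^{-1}\max\{\|a^\star_n\|,\|u^\star_n\|\}=1$. Therefore $\{a^\star_n\}$ and $\{u^\star_n\}$ are bounded.

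It remains to dispose of the bounded case. Extract weak$^\star$ limits $a^\star_n\overset{w^\star}{\to}a^\star$, $u^\star_n\overset{w^\star}{\to}u^\star$ with $a^\star+u^\star=0$. If $y^\star\neq 0$, passing to the limit in the coderivative inclusions yields $a^\star\in D^\star F(\bar x,\bar y-\bar k)(y^\star)$ and $u^\star\in D^\star G(\bar x,\bar k)(y^\star)$, so $0=a^\star+u^\star\in D^\star F(\bar x,\bar y-\bar k)(y^\star)+D^\star G(\bar x,\bar k)(y^\star)$, i.e. $y^\star$ is a nonzero element of $\text{Ker}\big(D^\star F(\bar x,\bar y-\bar k)+D^\star G(\bar x,\bar k)\big)$, contradicting (\ref{star?}). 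If instead $y^\star=0$, then after reindexing so that $\|z^\star_n\|\le 1/n$, the inclusions read $a^\star_n\in\hat D^\star F(x_{1n},w_n)(y^\star_n+(1/n)B_{Y^\star})$ and $u^\star_n\in\hat D^\star G(x_{2n},z_n)(y^\star_n)$ with $\|a^\star_n+u^\star_n\|\to 0$ and $y^\star_n\overset{w^\star}{\to}0$, so condition (iii) gives $y^\star_n\to 0$ in norm, contradicting $\|y^\star_n\|=1$. Both cases being impossible, the liminf is positive and, by the reduction above, $\mathcal{E}_{(F,G)}$ is metrically regular around $(\bar x,\bar k,\bar y)$.
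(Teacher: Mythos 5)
Your proof is correct and follows essentially the same route as the paper's: a contradiction argument via Theorem \ref{Limtheo}, splitting into the unbounded case (handled by normalization, condition (ii) and PSNC) and the bounded case (handled by the kernel condition and (iii)). You are in fact slightly more careful than the paper in two spots — you justify that the two dual sequences are simultaneously bounded or unbounded via $\bigl|\,\Vert a^{\star}_n\Vert-\Vert u^{\star}_n\Vert\,\bigr|\le\Vert a^{\star}_n+u^{\star}_n\Vert\to 0$, and you explicitly verify hypothesis $(\mathcal{H})$ before invoking Theorem \ref{Limtheo} — but the argument is the same.
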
 
\begin{proof}
 We prove the result  by contradiction. Suppose  that $ \mathcal{E}_{(F,G)}$ fails to be metrically regular around $(\bar{x},\bar{k},\bar{y}).$ Then, by Theorem \ref{Limtheo}, there exist sequences $${(x_n,y_n-k_n)\overset {F} {\rightarrow}(\bar{x},\bar{y}-\bar{k})}, {(x_n,k_n)\overset {G} {\rightarrow}(\bar{x},\bar{k})}, (x^{\star}_{n}, u^{\star}_{n}, y^{\star}_{n},z^{\star}_{n})\in X^{\star}\times X^{\star}\times Y^{\star}\times Y^{\star},$$ with 
$$
\begin{array}{ll}
x^{\star}_{n}\in \hat{D}^{\star}F(x_{n},y_{n}-k_{n})(y^{\star}_{n}+z^{\star}_{n})+u^{\star}_{n},\\
u^{\star}_{n}\in \hat{D}^{\star}G(x_n,k_n)(y^{\star}_{n}), \\
y^{\star}_{n}\in S_{Y^{\star}}, z^{\star}_{n}\in (1/n )B_{Y^\star},\\
\end{array}
$$
and $$
\begin{array}{ll}
x^{\star}_{n}\to 0.
\end{array}
$$
Then there is $v^{\star}_{n}\in \hat{D}^{\star}F(x_{n},y_{n}-k_{n})(y^{\star}_{n}+z^{\star}_{n})$ such that $x^{\star}_{n}=u^{\star}_{n}+v^{\star}_{n}.$\\
Since $Y$ is an Asplund space, we can assume that $y^{\star}_{n}\overset {w^{\star}} {\rightarrow} y^{\star}$ $\in Y^{\star}.$ 

We consider the following cases:\\
Case 1.  The sequences $\{ u^{\star}_{n}\}_{n\in\N}$, $\{v^{\star}_{n}\}_{n\in\N}$  are unbounded.  We can assume that 
$$\Vert u^{\star}_{n}\Vert\to \infty, \Vert v^{\star}_{n}\Vert\to \infty,$$ and $$\frac{u^{\star}_{n}}{\Vert u^{\star}_{n}\Vert}\overset {w^{\star}} {\rightarrow}  u^{\star}, \frac{v^{\star}_{n}}{\Vert v^{\star}_{n}\Vert}\overset {w^{\star}} {\rightarrow}  v^{\star}.$$  
Then, $$y^{\star}_{n}/\Vert u^{\star}_{n}\Vert\to 0\; \text{and} \; (y^{\star}_{n}+z^{\star}_{n})/\Vert v^{\star}_{n}\Vert\to 0.$$ 
Consequently, $$u^{\star}\in D^{\star}G(\bar{x},\bar{k})(0), v^{\star}\in D^{\star}F(\bar{x},\bar{y}-\bar{k})(0).$$
On the other hand, $$u^{\star}+v^{\star}=0, (\text{since} \Vert u^{\star}_{n}+v^{\star}_{n}\Vert\to 0).$$ 
It follows that $$v^{\star}\in D^{\star}F(\bar{x},\bar{y}-\bar{k})(0)\cap -D^{\star}G(\bar{x},\bar{k})(0).$$
Therefore, by (ii), we have that $u^{\star}=v^{\star}=0.$ \\
So, $$\frac{u^{\star}_{n}}{\Vert u^{\star}_{n}\Vert}\to 0,\quad \text{or}\quad\frac{v^{\star}_{n}}{\Vert v^{\star}_{n}\Vert}\to 0,\; \text{(by PSNC property of F or G)}.$$ This contradicts 
the fact that  $\frac{u^{\star}_{n}}{\Vert u^{\star}_{n}\Vert}$ and $\frac{v^{\star}_{n}}{\Vert v^{\star}_{n}\Vert}$ belong to the unit sphere $S_{Y^{\star}}$ of $Y^{\star}.$ 

Case 2.   The sequences $\{ u^{\star}_{n}\}_{n\in\N}$, $\{v^{\star}_{n}\}_{n\in\N}$  are bounded.  Assume that $u^{\star}_{n} \overset {w^{\star}} {\rightarrow} u^{\star}$ , $v^{\star}_{n}\overset {w^{\star}} {\rightarrow} v^{\star}.$\\
It follows that $$u^{\star}\in D^{\star}G(\bar{x},\bar{k})(y^{\star}), v^{\star}\in D^{\star}F(\bar{x},\bar{y}-\bar{k})(y^{\star}).$$ Moreover, $$u^{\star}+v^{\star}=0.$$
So, $$0\in [D^{\star}G(\bar{x},\bar{k})(y^{\star})+D^{\star}F(\bar{x},\bar{y}-\bar{k})(y^{\star})]=[D^{\star}G(\bar{x},\bar{k})+D^{\star}F(\bar{x},\bar{y}-\bar{k})](y^{\star}),$$ which means that $$y^{\star}\in Ker[D^{\star}G(\bar{x},\bar{k})+D^{\star}F(\bar{x},\bar{y}-\bar{k})].$$
By $(\star)$, one has that $y^{\star}=0.$ \\
Now, by assumption, one gets $y^{\star}_{n}\to 0$ which contradicts  to $\Vert y^{\star}_{n}\Vert=1.$ \qed 
\end{proof}
\begin{remark}
If $X, Y$ are finite dimensional spaces,  then conditions  (i), (iii) hold true automatically, while  condition (ii)  holds if $F$ or $G$ is pseudo-Lipschitz  at $(\bar{x},\bar{y}-\bar{k})$ or  $(\bar{x},\bar{k}),$ respectively.
\end{remark}
\bigskip
\section{Applications to  Variational Systems}
In this section, we use the  above  results to study some properties  of variational systems of the form
\begin{equation}\label{varsys}
0\in F(x)+G(x,p),
\end{equation}
where  $X$ is   a complete metric space, $Y$ is a Banach space,   $P$ is  a topological  space considered as a parameter space, $F:X\rightrightarrows Y, G:X\times P\rightrightarrows Y$ are given multifunctions.  
The solution set  of (\ref{varsys}) is defined by 
\begin{equation}\label{solu}
{\bf{S}}_{(F+G)}(p):=\{x\in X: 0\in F(x)+G(x,p)\},
\end{equation}
and we denote $$\S_{(F+G)}(y,p):=\{x\in X: y\in F(x)+G(x,p)\}. $$
For every $(y,p)\in Y\times P,$
$$\S_{\mathcal{E}_{(F,G)}}(y,p)=\{(x,k)\in X\times Y: y\in F(x)+k, k\in G(x,p)\},$$and, for every $p\in P,$
$${\bf{S}}_{\mathcal{E}_{(F,G)}}(p)=\{(x,k)\in X\times Y: 0\in F(x)+k, k\in G(x,p)\}.$$ \\ 
We say that  the multifunction ${\bf{S}}_{(F+G)}$ is  Robinson metrically regular   (see  \cite{Rob1, Rob2})  around $(\bar{x},\bar{p})$ with modulus $\tau$,  iff there exist neighborhoods $\mathcal{U},\mathcal{V}$  of $\bar{x}, \bar{p},$ respectively, such that $$d(x,{\bf{S}}_{(F+G)}(p))\le \tau d(0,F(x)+G(x,p)),\;\text{for all} \;(x,p)\in\mathcal{U}\times \mathcal{V}.$$
We also recall that  the multifunction $G:X\times P\rightrightarrows Y$ is said to be pseudo-Lipschitz  around $(\bar{x},\bar{p},\bar{y})$ with $\bar{y}\in G(\bar{x},\bar{p})$ with respect to $x$, uniformly in $p$ with constant $\kappa >0$ iff there is a neighborhood  $\mathcal{U}\times \mathcal{V}\times \mathcal{W}$ of $(\bar{x},\bar{p},\bar{y})$ such that 
$$G(x,p)\cap \mathcal{W}\subset G(u,p)+\kappa d(x,u) \bar{B}_{Y} \; \text{for all} \; x,u\in \mathcal{U}, \text{and for all}\; p\in \mathcal{V}.$$
The lower semicontinuous envelope $(x,p,k,y)\mapsto \varphi_{p,\mathcal{E}}((x,k),y)$  of the distance function\\ $d(y,\mathcal{E}_{(F,G)}((x,p),k))$ is defined by,  for each $(x,p,k,y)\in X\times P\times Y\times Y$
$$\varphi_{p,\mathcal{E}}((x,k),y):=\liminf_{(u,v,w)\to (x,k,y)}d(w,\mathcal{E}_{(F,G)}((u,p),v))$$
\[
=\left\{
\begin{array}
[c]{ll}%
\liminf\limits_{ (u,v)\rightarrow (x,k), v\in G(u,p)}d(y,F(u)+k),  & \text{if $k\in G(x,p)$}\\
+\infty,  & \text{otherwise.}%
\end{array}
\right.
\]
\begin{lemma}\label{(F,G)}
Let $X$ be a complete metric space and $Y$ be a Banach space and let
 $P$ be a topological  space. Suppose that the set-valued mappings
$F:X\rightrightarrows Y,$ $G:X\times P\rightrightarrows Y$ satisfy the following conditions
for some $(\bar{x},\bar{k},\bar{p})\in X\times Y\times P$:

\item{(a)} $(\bar {x},\bar{k})\in {\bf{S}}_{\mathcal{E}_{(F,G)}}(\bar{p});$

\item{(b)} the set-valued mapping  $p \rightrightarrows G(\bar {x},p)$ is  lower
semicontinuous at $\bar{p};$

\item{(c)} the set-valued mapping $F$ is a closed multifunction, and for any $p$ near $\bar{p},$ the set-valued mapping $x \rightrightarrows
G(x,p)$ is a closed multifunction.

Then 
\item{(i)}  for ever $p$ near $\bar{p}$, the epigraphical multifunction $\mathcal{E}_{(F,G)}$ has closed graph, and, $\mathcal{E}_{(F,G)}((\bar{x},\cdot),\bar{k})$  is lower semicontinuous at $\bar{p};$
\item{(ii)} the function $p\mapsto\varphi_{p,\mathcal{E}}((\bar{x},\bar{k}),0)$ is upper semicontinuous at $\bar{p};$
\item{(iii)} for each $(y,p)\in Y\times P;$
$$\{(x,k)\in X\times Y: \varphi_{p,\mathcal{E}}((x,k),y)=0\}=\S_{\mathcal{E}_{(F,G)}}(y,p).$$
\end{lemma}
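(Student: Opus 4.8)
The three assertions are the parametric counterparts of Lemma \ref{dame} and of hypotheses (b)--(c) of Theorem \ref{Char2}, now read for the set-valued map $((x,k),p)\mapsto\mathcal{E}_{(F,G)}((x,p),k)$ from $(X\times Y)\times P$ into $Y$, with base point $((\bar{x},\bar{k}),\bar{p})$. The plan is therefore to reduce (iii) and the closed-graph half of (i) to Lemma \ref{dame} applied, for each fixed $p$ near $\bar{p}$, to the closed multifunctions $F$ and $G_p:=G(\cdot,p)$; to obtain the lower semicontinuity half of (i) directly from hypothesis (b); and to deduce (ii) from that lower semicontinuity by the same distance-function argument used for the implication $(iii)\Rightarrow(ii)$ in the proof of Theorem \ref{Char2}.

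I would first dispose of (iii) and the closed-graph statement, which are immediate. Fix $p$ near $\bar{p}$. By hypothesis (c) both $F$ and $G_p$ are closed, so Lemma \ref{dame} applies verbatim to the pair $(F,G_p)$: the map $(x,k)\mapsto\mathcal{E}_{(F,G)}((x,p),k)$ has closed graph, and for every $y\in Y$ its zero-distance set coincides with the solution set, that is, $\{(x,k):\varphi_{p,\mathcal{E}}((x,k),y)=0\}=\S_{\mathcal{E}_{(F,G)}}(y,p)$. This is exactly (iii) and settles the first half of (i).

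For the lower semicontinuity in (i) I would argue straight from hypothesis (b). Let $w_0\in\mathcal{E}_{(F,G)}((\bar{x},\bar{p}),\bar{k})=F(\bar{x})+\bar{k}$, write $w_0=z_0+\bar{k}$ with $z_0\in F(\bar{x})$, and take any $p_\nu\to\bar{p}$. Since $\bar{k}\in G(\bar{x},\bar{p})$ and $p\mapsto G(\bar{x},p)$ is lower semicontinuous at $\bar{p}$, there are $k_\nu\in G(\bar{x},p_\nu)$ with $k_\nu\to\bar{k}$; then $z_0+k_\nu\in F(\bar{x})+k_\nu=\mathcal{E}_{(F,G)}((\bar{x},p_\nu),k_\nu)$ and $z_0+k_\nu\to w_0$, which is the required approximation. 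Note that the $k$-coordinate has to be allowed to drift with $p$, since the membership test $\bar{k}\in G(\bar{x},p)$ need not persist. Assertion (ii) then follows: from (a) we have $0\in F(\bar{x})+\bar{k}$ and $\bar{k}\in G(\bar{x},\bar{p})$, so (iii) gives $\varphi_{\bar{p},\mathcal{E}}((\bar{x},\bar{k}),0)=0$; lower semicontinuity of the epigraphical map at $\bar{p}$ renders the associated distance upper semicontinuous at $\bar{p}$ in the sense of \cite[Cor. 20]{AuE}, and since $\varphi_{p,\mathcal{E}}((\bar{x},\bar{k}),0)$ is the lower semicontinuous envelope of that distance, hence dominated by it, upper semicontinuity transfers to $\varphi$, yielding $\limsup_{p\to\bar{p}}\varphi_{p,\mathcal{E}}((\bar{x},\bar{k}),0)\le 0=\varphi_{\bar{p},\mathcal{E}}((\bar{x},\bar{k}),0)$, and as $\varphi\ge 0$ the value is $0$.

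I expect the genuine difficulty to lie precisely in this passage to upper semicontinuity in the parameter. Because $\mathcal{E}_{(F,G)}((\bar{x},p),\bar{k})$ is empty unless $\bar{k}\in G(\bar{x},p)$, one cannot keep the $k$-coordinate frozen at $\bar{k}$; the proof must instead let $k_\nu\in G(\bar{x},p_\nu)$ converge to $\bar{k}$ and control the distance function jointly in $(k,p)$. This is exactly the place where the lower semicontinuity of $G(\bar{x},\cdot)$ from hypothesis (b) and the envelope inequality $\varphi_{p,\mathcal{E}}\le d(\cdot,\mathcal{E}_{(F,G)})$ are indispensable, mirroring the role played by condition (b) in Theorem \ref{Char2}.
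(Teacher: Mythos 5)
Your reduction of (iii) and the closed-graph half of (i) to Lemma \ref{dame}, applied to the pair $(F,G(\cdot,p))$ for each fixed $p$ near $\bar{p}$, is exactly what the paper intends, and your ``drifting-$k$'' argument correctly establishes lower semicontinuity of the \emph{joint} map $(p,k)\rightrightarrows\mathcal{E}_{(F,G)}((\bar{x},p),k)$ at $((\bar{p},\bar{k}),w_0)$. The paper's own proof consists solely of the remark that lower semicontinuity of $G(\bar{x},\cdot)$ transfers to $\mathcal{E}_{(F,G)}((\bar{x},\cdot),\bar{k})$, so up to this point you are on the same route and considerably more explicit.

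The gap is in the final step, the deduction of (ii). The envelope $\varphi_{p,\mathcal{E}}((\bar{x},\bar{k}),0)$ is a $\liminf$ over $(u,v,w)\to(\bar{x},\bar{k},0)$ \emph{with $p$ frozen}; being a supremum of infima over shrinking balls, it is dominated only by the value $d(0,\mathcal{E}_{(F,G)}((\bar{x},p),\bar{k}))$ at the point itself, not by $\inf\{d(0,\mathcal{E}_{(F,G)}((\bar{x},p),k)):k \text{ near } \bar{k}\}$. The joint upper semicontinuity in $(p,k)$ that your drifting-$k$ lemma yields therefore cannot be ``transferred to $\varphi$'': for a fixed $p$ near $\bar{p}$ the $\liminf$ needs points $v\in G(u,p)$ \emph{arbitrarily} close to $\bar{k}$, whereas hypothesis (b) supplies a single such point whose distance to $\bar{k}$ is controlled by $p$, not by the radius of the ball in the $\liminf$. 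Concretely, take $X=Y=P=\mathbb{R}$, $F\equiv\{0\}$, $G(0,p)=\{p\}$ and $G(x,p)=\emptyset$ for $x\neq 0$, with $(\bar{x},\bar{k},\bar{p})=(0,0,0)$: hypotheses (a)--(c) all hold, yet $\bar{k}\notin G(\bar{x},p)$ for $p\neq 0$, so $\varphi_{p,\mathcal{E}}((0,0),0)=+\infty$ there and (ii) fails. This is a defect of the statement rather than of your strategy --- the paper's one-line proof has the same problem, since $\mathcal{E}_{(F,G)}((\bar{x},p),\bar{k})=\emptyset$ whenever $\bar{k}\notin G(\bar{x},p)$, so the frozen-$\bar{k}$ map is not lower semicontinuous either --- but your write-up should not assert that upper semicontinuity ``transfers''; a repair requires either the extra assumption $\bar{k}\in G(\bar{x},p)$ for all $p$ near $\bar{p}$, or redefining $\varphi_{p,\mathcal{E}}$ with the $\liminf$ taken jointly in $(u,v,w,q)\to(x,k,y,p)$, in which case your argument goes through verbatim.
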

\begin{proof}
 We only note that,  if the multifunction $p \rightrightarrows G(\bar {x},p)$ is  lower
semicontinuous at $\bar{p},$ then so is the mapping $\mathcal{E}_{(F,G)}((\bar{x},\cdot),\bar{k})$.\qed 
\end{proof}
By using the strong slope of the lower  semicontinuous envelope $\varphi_{p,\mathcal{E}}$, one has the following result.
\begin{theorem}\label{paravasys}
Let $X$ be a complete metric space, $Y$ be a Banach space and let $P$ be a topological space. Suppose that the set-valued mappings
$F:X\rightrightarrows Y, G:X\times P\rightrightarrows Y$ satisfy  conditions $(a), (b), (c)$ from Lemma \ref{(F,G)} around $(\bar{x},\bar{k},\bar{p})\in X\times Y\times P$. If there exist a neighborhood $\mathcal{T}_1\times \mathcal{U}_1\times \mathcal{V}_1\times \mathcal{W}_1$  of $(\bar{x},\bar{p},\bar{k},0)$ and  reals  $m,\gamma>0$  such that $\vert\nabla
\varphi_{p,\mathcal{E}}((\cdot,\cdot),y)\vert (x,k)\geq m$ for all $(x,p,k,y)\in \mathcal{T}_1\times \mathcal{U}_1\times \mathcal{V}_1\times \mathcal{W}_1$ with
$\varphi_{p,\mathcal{E}}((x,k),y)\in ]0,\gamma[,$ then 
there exists a neighborhood $\mathcal{T}\times \mathcal{U}\times \mathcal{V}\times \mathcal{W}$ of $(\bar{x},\bar{p},\bar{k},0)$ such that
$$md((x,k), \S_{\mathcal{E}_{(F,G)}}(y,p))\le \varphi_{p,\mathcal{E}}((x,k),y),$$
for all $(x,p,k,y)\in\mathcal{T}\times \mathcal{U}\times \mathcal{V}\times \mathcal{W}$.\\
\end{theorem}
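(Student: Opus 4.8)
The plan is to recognize the statement as the parametric slope/error-bound characterization of Theorem \ref{Char2}, applied not to $F$ itself but to the \emph{epigraphical} multifunction, with the pair $(x,k)$ playing the role of the primal variable and $(p,y)$ the role of the parameters. Concretely, I would set $\widehat{X}:=X\times Y$ endowed with the product (max) metric; since $X$ is a complete metric space and $Y$ is a Banach space, $\widehat{X}$ is again complete. I would then consider $\widehat{F}\colon \widehat{X}\times P\rightrightarrows Y$ defined by $\widehat{F}((x,k),p):=\mathcal{E}_{(F,G)}((x,p),k)$, whose associated solution map is exactly $(y,p)\mapsto \S_{\mathcal{E}_{(F,G)}}(y,p)$ and whose lower semicontinuous envelope of the distance function is precisely $\varphi_{p,\mathcal{E}}((x,k),y)$.

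First I would check that the three standing hypotheses (a), (b), (c) of Theorem \ref{Char2} hold for $\widehat{F}$ at the point $((\bar x,\bar k),0,\bar p)$. Hypothesis (a) is immediate from $(\bar x,\bar k)\in{\bf S}_{\mathcal{E}_{(F,G)}}(\bar p)$. For (b) and (c) I would invoke Lemma \ref{(F,G)}: part (i) gives both the closedness of the graph of $(x,k)\rightrightarrows \mathcal{E}_{(F,G)}((x,p),k)$ for $p$ near $\bar p$ (this is hypothesis (c)) and the lower semicontinuity of $p\rightrightarrows \mathcal{E}_{(F,G)}((\bar x,p),\bar k)$ at $\bar p$ (this is hypothesis (b)). Moreover Lemma \ref{(F,G)}(iii) supplies the crucial identification $\{(x,k):\varphi_{p,\mathcal{E}}((x,k),y)=0\}=\S_{\mathcal{E}_{(F,G)}}(y,p)$, so that the zero set of the envelope is the solution map and the conclusion of Theorem \ref{Char2}(ii) reads exactly as the desired error bound.

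Next I would match the data. The slope hypothesis, namely $|\nabla\varphi_{p,\mathcal{E}}((\cdot,\cdot),y)|(x,k)\ge m$ on a neighborhood of $(\bar x,\bar p,\bar k,0)$ for all points with $\varphi_{p,\mathcal{E}}((x,k),y)\in ]0,\gamma[$, is precisely condition (iv) of Theorem \ref{Char2} written for $\widehat F$ with $\tau:=1/m$. Since in Theorem \ref{Char2} one has the chain $(iv)\Rightarrow(iii)\Rightarrow(ii)$ (only the reverse passage $(i)\Rightarrow(iv)$ requiring $Y$ to be normed, which is harmless here as $Y$ is Banach), condition (ii) of that theorem holds on some smaller neighborhood, giving $d((x,k),\S_{\mathcal{E}_{(F,G)}}(y,p))\le \tau\,\varphi_{p,\mathcal{E}}((x,k),y)$. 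Substituting $\tau=1/m$ and rearranging yields exactly $m\,d((x,k),\S_{\mathcal{E}_{(F,G)}}(y,p))\le \varphi_{p,\mathcal{E}}((x,k),y)$ on $\mathcal{T}\times\mathcal{U}\times\mathcal{V}\times\mathcal{W}$, as claimed.

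I do not expect a genuine analytic obstacle, since the argument is a reduction; the delicate bookkeeping is twofold. First, one must confirm that the lower semicontinuous envelope attached to $\widehat F$ literally coincides with $\varphi_{p,\mathcal{E}}$ (including the two-case formula of Lemma \ref{(F,G)}), so that the slope in the hypothesis is the slope to which Theorem \ref{Char2} applies; this is where the lower semicontinuity in (b) and the graph-closedness in (c) are actually consumed. Second, one must track the translation of neighborhoods of $((\bar x,\bar k),\bar p,0)$ in $\widehat X\times P\times Y$ back into neighborhoods of $(\bar x,\bar p,\bar k,0)$ in $X\times P\times Y\times Y$, which is routine once the product-metric completeness of $\widehat X$ is recorded. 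As an alternative route I note that one may instead apply Theorem \ref{Err Para sys} directly to $f((x,k),(p,y)):=\varphi_{p,\mathcal{E}}((x,k),y)$, deducing its condition (ii) from the strong-slope lower bound and reading off the error bound (i); the required upper semicontinuity of $(p,y)\mapsto\varphi_{p,\mathcal{E}}((\bar x,\bar k),y)$ at $(\bar p,0)$ then proceeds exactly as in the proof of $(iii)\Rightarrow(ii)$ of Theorem \ref{Char2}, using Lemma \ref{(F,G)}(ii).
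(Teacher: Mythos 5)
Your proposal is correct and follows essentially the same route as the paper, whose entire proof is the one-line remark that the result follows by applying Theorem \ref{Char2} and Lemma \ref{(F,G)} to the mapping $\mathcal{E}_{(F,G)}(\cdot,\cdot)$. You have simply spelled out the reduction the authors leave implicit: taking $X\times Y$ as the (complete) primal space, verifying hypotheses (a)--(c) of Theorem \ref{Char2} via Lemma \ref{(F,G)}, and invoking the chain $(iv)\Rightarrow(iii)\Rightarrow(ii)$ with $\tau=1/m$.
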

\textit{Proof.} Applying Theorem \ref{Char2} and Lemma \ref{(F,G)} for the mapping $\mathcal{E}_{(F,G)}(\cdot,\cdot),$ one obtains the proof. 
\begin{proposition}\label{papro} Let $X$ be a complete metric space and $Y$ be a Banach space and let
 $P$ be a topological space. Suppose that the set-valued mappings
$F:X\rightrightarrows Y, G:X\times P\rightrightarrows Y$ satisfy  conditions $(a), (b), (c)$ from Lemma \ref{(F,G)} around $(\bar{x},\bar{k},\bar{p})\in X\times Y\times P$.
 If there exist a neighborhood
  $\mathcal{T}\times \mathcal{U}\times \mathcal{V}\times \mathcal{W} \subset X\times P\times Y\times Y$ 
 of $(\bar{x},\bar{p},\bar{k},0)$ and $m>0$ such that
$$md((x,k), \S_{\mathcal{E}_{(F,G)}}(y,p))\le\varphi_{p,\mathcal{E}}((x,k),y) \quad \mbox{for all}\quad(x,p,k,y)\in   \mathcal{T}\times \mathcal{U}\times \mathcal{V}\times \mathcal{W} $$
 then
there exists   $\theta>0$ such that
$$md(x,\S_{(F+G)}(y,p))\leq d(y,F(x)+G(x,p)\cap B(\bar{k},\theta))\quad \mbox{for all}\quad (x,p,y)\in \mathcal{T}\times \mathcal{U}\times \mathcal{W} .$$
Therefore, 
$$md(x,{\bf{S}}_{(F+G)}(p))\leq d(0,F(x)+G(x,p)\cap B(\bar{k},\theta))\quad \mbox{for all}\quad (x,p)\in \mathcal{T}\times \mathcal{U}.$$
\end{proposition}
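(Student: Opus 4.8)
The plan is to follow the template of the implication $(i)\Rightarrow(ii)$ in Proposition \ref{Slope E}, now carried out in the parametric setting with $y$ free and $p$ playing the role of the parameter. First I would fix radii. Writing the given neighborhood as $\mathcal{T}\times\mathcal{U}\times\mathcal{V}\times\mathcal{W}$ around $(\bar x,\bar p,\bar k,0)$, I would choose $\theta>0$ small enough that $B(\bar k,\theta)\subseteq\mathcal{V}$, so that every $k\in G(x,p)\cap B(\bar k,\theta)$ is an admissible second argument in the hypothesis. Note that whenever $k\notin G(x,p)$ the envelope equals $+\infty$ and the hypothesis is vacuous, so only the values $k\in G(x,p)\cap\mathcal{V}$ are used.

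Next, I would fix $(x,p,y)\in\mathcal{T}\times\mathcal{U}\times\mathcal{W}$ and any $k\in G(x,p)\cap B(\bar k,\theta)$. The key elementary bound is $\varphi_{p,\mathcal{E}}((x,k),y)\le d(y,F(x)+k)$, which holds because $k\in G(x,p)$ and the pair $(x,k)$ is itself admissible in the liminf defining the envelope recorded just before the statement. Invoking the hypothesis $md((x,k),\S_{\mathcal{E}_{(F,G)}}(y,p))\le\varphi_{p,\mathcal{E}}((x,k),y)$ and combining, for each $\varepsilon>0$ I would pick $(u,z)\in\S_{\mathcal{E}_{(F,G)}}(y,p)$, that is, $z\in G(u,p)$ and $y\in F(u)+z$, with $d((x,k),(u,z))<(1+\varepsilon)m^{-1}d(y,F(x)+k)$. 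If $d(y,F(x)+k)=+\infty$ the target inequality is vacuous, so I may assume it finite, which also guarantees $\S_{\mathcal{E}_{(F,G)}}(y,p)\ne\emptyset$ and legitimizes the choice of the approximate projection.

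Then I would exploit the product metric. Since $y\in F(u)+z$ with $z\in G(u,p)$, we have $y\in F(u)+G(u,p)$, that is, $u\in\S_{(F+G)}(y,p)$; and because $d(x,u)\le\max\{d(x,u),\|k-z\|\}=d((x,k),(u,z))$, it follows that $d(x,\S_{(F+G)}(y,p))\le d(x,u)<(1+\varepsilon)m^{-1}d(y,F(x)+k)$. Since $k$ was an arbitrary element of $G(x,p)\cap B(\bar k,\theta)$, taking the infimum over such $k$ and using the identity $d(y,F(x)+G(x,p)\cap B(\bar k,\theta))=\inf\{d(y,F(x)+k):k\in G(x,p)\cap B(\bar k,\theta)\}$, then letting $\varepsilon\downarrow0$, yields $md(x,\S_{(F+G)}(y,p))\le d(y,F(x)+G(x,p)\cap B(\bar k,\theta))$ for all $(x,p,y)\in\mathcal{T}\times\mathcal{U}\times\mathcal{W}$. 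The final assertion is the special case $y=0$, which is admissible since $0\in\mathcal{W}$, together with the identification $\S_{(F+G)}(0,p)={\bf{S}}_{(F+G)}(p)$.

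I expect the only delicate points to be bookkeeping rather than genuine difficulty: ensuring that restricting $k$ to $B(\bar k,\theta)$ keeps every quadruple inside the hypothesis neighborhood, and justifying the envelope bound $\varphi_{p,\mathcal{E}}((x,k),y)\le d(y,F(x)+k)$ from the explicit formula for $\varphi_{p,\mathcal{E}}$. The passage from a single-$k$ estimate to the infimum over $G(x,p)\cap B(\bar k,\theta)$ is the step most prone to error, and I would keep $k$ frozen until the very end before taking that infimum.
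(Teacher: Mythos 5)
Your proposal is correct and follows essentially the same route as the paper's proof: shrink $\mathcal{V}$ to a ball $B(\bar k,\theta)$, use the envelope bound $\varphi_{p,\mathcal{E}}((x,k),y)\le d(y,F(x)+k)$ for $k\in G(x,p)\cap B(\bar k,\theta)$ to extract an approximate projection $(u,z)\in\S_{\mathcal{E}_{(F,G)}}(y,p)$, observe that $u\in\S_{(F+G)}(y,p)$ with $d(x,u)\le d((x,k),(u,z))$, and finish by taking the infimum over $k$ and letting $\varepsilon\downarrow 0$, with the last assertion obtained by specializing $y=0$. Your explicit handling of the empty/infinite cases and of the nonemptiness of $\S_{\mathcal{E}_{(F,G)}}(y,p)$ is a minor tidying of details the paper leaves implicit.
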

\begin{proof}
By the    hypothesis, there exist a neighborhood
  $\mathcal{T}\times \mathcal{U}\times \mathcal{V}\times \mathcal{W} \subset X\times P\times Y\times Y$ of $(\bar{x},\bar{p},\bar{k},0)$ and $m>0$ such that,
  for every $(x,p,k,y)\in \mathcal{T}\times \mathcal{U}\times \mathcal{V}\times \mathcal{W} ,$ it holds 
$$md((x,k), \S_{\mathcal{E}_{(F,G)}}(y,p))\le \varphi_{p,\mathcal{E}}((x,k),y).$$
Here, we can assume $\mathcal{V}=B(\bar{k},\theta)$, with certain positive $\theta$.
Then, for every small $\varepsilon>0$ and for  every $(x,p,k,y)\in \mathcal{T}\times \mathcal{U}\times [B(\bar{k},\theta)\cap G(x,p)]\times \mathcal{W},$ there is $(u,z)\in \S_{\mathcal{E}_{(F,G)}}(y,p),$ i.e., $y\in F(u)+z, z\in G(u,p)$
such that $$md(u,x)\le m\max\{d(u,x), \Vert z-k\Vert\}<(1+\varepsilon)d(y,F(x)+k).$$
Noting  that $u\in (F+G)^{-1}(y),$ we obtain that $$md(x,(F+G)^{-1}(y))<(1+\varepsilon)d(y,F(x)+k).$$
Thus, $$md(x,(F+G)^{-1}(y))\le (1+\varepsilon)d(y,F(x)+G(x,p)\cap B(\bar{k},\theta)),$$
or,  $$md(x,\S_{(F+G)}(y,p))\le (1+\varepsilon)d(y,F(x)+G(x,p)\cap B(\bar{k},\theta)).$$ \\
Since this inequality does not depend on arbitrarily small $\varepsilon>0$, we obtain that  
$$md(x,\S_{(F+G)}(y,p))\le d(y,F(x)+G(x,p)\cap B(\bar{k},\theta))$$ for all $(x,p,y)\in \mathcal{T}\times \mathcal{U}\times \mathcal{W}.$\\
Taking  $\bar y=0$ and $y=\bar y$, we obtain  the second conclusion of the Theorem. The proof is complete.\qed 
\end{proof}
 
In the sequel,  we use for the parametrized case  the concept of  locally sum-stability,   which  was considered in the previous section.
\begin{definition}
Let $F:X\rightrightarrows Y, G:X\times P\rightrightarrows Y$ be two multifunctions and $(\bar{x},\bar{p},\bar{y}, \bar{z})\in X\times P\times Y\times Y$  be such that $\bar{y}\in F(\bar{x}),\bar{z}\in G(\bar{x},\bar{p}).$ We say that the pair $(F,G)$ is locally sum-stable around $(\bar{x},\bar{p},\bar{y}, \bar{z})$ iff,  for every $\varepsilon>0, $ there exists $\delta>0$ and a neighborhood $W$ of $\bar p$ such that,  for every $(x,p)\in B(\bar{x},\delta)\times W$  and every $w\in (F+G)(x)\cap  B(\bar{y}+\bar{z},\delta),$ there are $y\in F(x)\cap  B(\bar{y},\varepsilon)$ and $z\in G(x)\cap  B(\bar{z},\varepsilon)$ such that $w=y+z.$
\end{definition}
A following simple case which ensures  the locally sum-stability of  the pair $(F,G),$ is analogous to Proposition \ref{usc-sum}.
\begin{proposition}\label{usc-sum-para}
Let $F:X\rightrightarrows Y, G:X\times P\rightrightarrows Y$ be two multifunctions and $(\bar{x},\bar{p},\bar{y}, \bar{z})\in X\times P\times Y\times Y$ such that $\bar{y}\in F(\bar{x}),\bar{z}\in G(\bar{x},\bar{p}).$ If $G(\bar{x},\bar{p})=\{\bar{z}\}$ and $G$ is upper semicontinuous at $(\bar{x},\bar{p}),$ then the pair $(F,G)$ is locally sum-stable around $(\bar{x},\bar{p},\bar{y}, \bar{z}).$ 
\end{proposition}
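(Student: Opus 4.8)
The plan is to reproduce the argument of Proposition \ref{usc-sum} almost verbatim, simply carrying the parameter $p$ along throughout. The role of the single-valued-image hypothesis $G(\bar{x},\bar{p})=\{\bar{z}\}$ is to make upper semicontinuity confine the values of $G(x,p)$ to an arbitrarily small ball around $\bar{z}$ whenever $(x,p)$ is close to $(\bar{x},\bar{p})$. This is exactly what forces the $G$-component of any admissible splitting $w=y+z$ to lie near $\bar{z}$, after which the $F$-component $y$ is automatically near $\bar{y}$ by the triangle inequality.

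First I would fix $\varepsilon>0$ and invoke the definition of upper semicontinuity of the \emph{two-variable} multifunction $G$ at $(\bar{x},\bar{p})$. Since $G(\bar{x},\bar{p})=\{\bar{z}\}$, the open ball $B(\bar{z},\varepsilon/2)$ is an open set containing $G(\bar{x},\bar{p})$, so there exist $\delta>0$ and a neighborhood $W$ of $\bar{p}$ with
$$G(x,p)\subset B(\bar{z},\varepsilon/2)\quad\text{for all}\quad(x,p)\in B(\bar{x},\delta)\times W.$$
Setting $\eta:=\min\{\delta,\varepsilon/2\}$, I would then take any $(x,p)\in B(\bar{x},\eta)\times W$ and any $w\in \big(F(x)+G(x,p)\big)\cap B(\bar{y}+\bar{z},\eta)$, and write $w=y+z$ with $y\in F(x)$ and $z\in G(x,p)$.

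The closing estimates are then identical to those in Proposition \ref{usc-sum}: the inclusion $z\in G(x,p)\subset B(\bar{z},\varepsilon/2)\subset B(\bar{z},\varepsilon)$ handles the $G$-component, while
$$\Vert y-\bar{y}\Vert=\Vert w-z-\bar{y}\Vert\le\Vert w-\bar{y}-\bar{z}\Vert+\Vert z-\bar{z}\Vert<\eta+\varepsilon/2\le\varepsilon$$
handles the $F$-component, delivering the required decomposition $w=y+z$ with $y\in F(x)\cap B(\bar{y},\varepsilon)$ and $z\in G(x,p)\cap B(\bar{z},\varepsilon)$. The only genuine point to watch — and hence the \emph{main obstacle}, modest as it is — is the bookkeeping around upper semicontinuity in two variables: I must ensure that the neighborhood supplied by u.s.c.\ contains a product set $B(\bar{x},\delta)\times W$, matching the parametric definition of local sum-stability, rather than a single ball as in the non-parametric case. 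Beyond this, the argument is a routine parametrization of Proposition \ref{usc-sum}, with $\eta$ playing the role of the $\delta$ appearing in the definition.
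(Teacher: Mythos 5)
Your proof is correct and is precisely the argument the paper intends: the paper gives no separate proof for this parametric version, stating only that it is analogous to Proposition \ref{usc-sum}, and your parametrization (extracting a product neighborhood $B(\bar{x},\delta)\times W$ from the upper semicontinuity of $G$ at $(\bar{x},\bar{p})$ and then repeating the triangle-inequality estimates verbatim) is exactly that analogue. The point you flag about ensuring the u.s.c.\ neighborhood contains a product set is handled correctly and is the only detail that distinguishes this case from the unparametrized one.
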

\begin{proposition}\label{@} Let $X$ be a complete metric space, $Y$ be a Banach space and let
 $P$ be a topological pace. Suppose that the set-valued mappings
$F:X\rightrightarrows Y, G:X\times P\rightrightarrows Y$ satisfy  conditions $(a), (b), (c)$ from  Lemma \ref{(F,G)} around $(\bar{x},\bar{k},\bar{p})\in X\times Y\times P$.
If there exist a neighborhood $\mathcal{T}\times\mathcal{U}$ of $(\bar{x},\bar{p})$ and $\theta,\tau>0$ such that 

\begin{equation}\label{semi-metric}
d(x,{\bf{S}}_{(F+G)}(p))\leq \tau d(0,F(x)+G(x,p)\cap B(\bar{k},\theta))\quad \mbox{for all}\quad (x,p)\in \mathcal{T}\times \mathcal{U},
\end{equation}
 and  $(F,G)$ is locally sum-stable around $(\bar{x},\bar{p},-\bar{k}, \bar{k}),$ then  $\bf{S}_{(F+G)}$ is Robinson metrically regular  around $(\bar{x},\bar{p})$ with modulus $\tau$.\\
 
 The conclusion remains true if the assumption of local sum stability  around $(\bar{x},\bar{p},-\bar{k}, \bar{k})$  is replaced by the following one:  $G(\bar{x},\bar{p})=\{\bar{z}\}$ and $G$ is upper semicontinuous at $(\bar{x},\bar{p}).$ 
\end{proposition}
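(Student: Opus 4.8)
The plan is to run the same two-case comparison argument used in the proof of Proposition~\ref{me-sum}, the only structural novelty being that the right-hand side is now frozen at $0$ while the perturbation is carried by the parameter $p$. First I would record the hypothesis~(\ref{semi-metric}) on a product neighborhood $B(\bar{x},\delta_1)\times\mathcal{U}_0$ with constants $\tau,\theta>0$, and then feed $\varepsilon:=\theta$ into local sum-stability around $(\bar{x},\bar{p},-\bar{k},\bar{k})$ to obtain $\delta\in(0,\delta_1)$ and a neighborhood $W\subset\mathcal{U}_0$ of $\bar{p}$ such that every $w\in\big(F(x)+G(x,p)\big)\cap B(0,\delta)$, for $(x,p)\in B(\bar{x},\delta)\times W$, decomposes as $w=a+b$ with $a\in F(x)$ and $b\in G(x,p)\cap B(\bar{k},\theta)$. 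Fixing $(x,p)\in B(\bar{x},\delta/2)\times W$, I then split according to the size of $d(0,F(x)+G(x,p))$.

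In the small-distance case $d(0,F(x)+G(x,p))<\delta/2$ I would pick $t\in F(x)+G(x,p)$ with $\|t\|<d(0,F(x)+G(x,p))+\gamma<\delta$, so that $t\in\big(F(x)+G(x,p)\big)\cap B(0,\delta)$; local sum-stability rewrites $t=a+b$ with $b\in G(x,p)\cap B(\bar{k},\theta)$, whence $t\in F(x)+G(x,p)\cap B(\bar{k},\theta)$ and $d\big(0,F(x)+G(x,p)\cap B(\bar{k},\theta)\big)\le\|t\|$. Letting $\gamma\downarrow 0$ collapses the two distances, and~(\ref{semi-metric}) gives $d(x,{\bf{S}}_{(F+G)}(p))\le\tau\,d(0,F(x)+G(x,p))$ directly.

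The large-distance case $d(0,F(x)+G(x,p))\ge\delta/2$ is where the parametric setting genuinely departs from Proposition~\ref{me-sum}, and I expect this to be the main obstacle: $\bar{x}$ need not solve the inclusion at an arbitrary $p$, so I cannot simply evaluate~(\ref{semi-metric}) at a fixed solution. Here I would invoke condition~(b) of Lemma~\ref{(F,G)}: since $\bar{k}\in G(\bar{x},\bar{p})$ and $p\mapsto G(\bar{x},p)$ is lower semicontinuous at $\bar{p}$, I may shrink $W$ so that for each $p\in W$ there is $k_p\in G(\bar{x},p)\cap B(\bar{k},\theta)$ with $\|k_p-\bar{k}\|$ as small as I please. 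Using $-\bar{k}\in F(\bar{x})$ (condition~(a)), this yields $d\big(0,F(\bar{x})+G(\bar{x},p)\cap B(\bar{k},\theta)\big)\le\|k_p-\bar{k}\|$, and~(\ref{semi-metric}) applied at $(\bar{x},p)$ produces $u\in{\bf{S}}_{(F+G)}(p)$ with $d(\bar{x},u)\le(1+\varepsilon')\tau\|k_p-\bar{k}\|$. Forcing $\|k_p-\bar{k}\|$ small enough that $d(\bar{x},u)<\tau\delta/4$, and restricting $x$ to $B(\bar{x},\tau\delta/4)$, I obtain $d(x,{\bf{S}}_{(F+G)}(p))\le d(x,\bar{x})+d(\bar{x},u)<\tau\delta/2\le\tau\,d(0,F(x)+G(x,p))$, closing the case.

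Intersecting the neighborhoods from the two cases delivers $d(x,{\bf{S}}_{(F+G)}(p))\le\tau\,d(0,F(x)+G(x,p))$ on a neighborhood of $(\bar{x},\bar{p})$, which is exactly Robinson metric regularity with modulus $\tau$. For the final assertion I would simply quote Proposition~\ref{usc-sum-para}: when $G(\bar{x},\bar{p})=\{\bar{k}\}$ and $G$ is upper semicontinuous at $(\bar{x},\bar{p})$, the pair $(F,G)$ is automatically locally sum-stable around $(\bar{x},\bar{p},-\bar{k},\bar{k})$ (using $-\bar{k}\in F(\bar{x})$), so the statement reduces to the case already established.
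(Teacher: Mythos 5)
Your proposal is correct and follows essentially the same route as the paper: the same two-case split on the size of $d(0,F(x)+G(x,p))$, with sum-stability collapsing the truncated distance onto the full one in the small case, and an evaluation of (\ref{semi-metric}) at $(\bar{x},p)$ plus the triangle inequality in the large case. Your only deviation is cosmetic: in Case 2 you use the lower semicontinuity of $p\rightrightarrows G(\bar{x},p)$ to exhibit an explicit witness $-\bar{k}+k_p\in F(\bar{x})+G(\bar{x},p)\cap B(\bar{k},\theta)$ near $0$, whereas the paper invokes the resulting upper semicontinuity of $p\mapsto d(0,F(\bar{x})+G(\bar{x},p))$ — the same mechanism, and arguably your version states more explicitly why the ball constraint on the right-hand side of (\ref{semi-metric}) is harmless at $\bar{x}$.
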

\begin{proof}
 The proof of  this proposition is very similar to that of Proposition \ref{me-sum}. Here, we sketch the proof.
Suppose that (\ref{semi-metric}) holds for every $ (x,p)\in \mathcal{T}\times \mathcal{U}.$ Here, we can assume that $\mathcal{T}=B(\bar{x},\delta)$, with some positive $\delta>0.$
 
Since $(F,G)$ is locally sum-stable around $(\bar{x},\bar{p},-\bar{k}, \bar{k}),$  there exists $\delta>0$ such that,  for every $(x,p)\in B(\bar{x},\delta)\times \mathcal{U}$  and every $w\in (F+G)(x)\cap  B(0,\delta),$ there are $y\in F(x)\cap  B(-\bar{k},\theta)$ and $z\in G(x)\cap  B(\bar{k},\theta)$ such that $w=y+z.$

 Fix $(x,p)\in B(\bar{x},\delta) \times  \mathcal{U}.$ 
 We consider two following cases:
 
 Case 1. $d(0,F(x)+G(x,p))<\delta/2.$ Fix $\gamma>0,$ small enough so that $d(0,F(x)+G(x,p))+\gamma<\delta/2,$ and take $t\in F(x)+G(x,p)$ such that 
 $$\Vert t\Vert<d(0,F(x)+G(x,p))+\gamma.$$ Hence we have 
 $\Vert t\Vert<\delta/2,$ i.e., $t\in B(0,\delta/2)\subset B(0,\delta).$  It follows that $t\in [F(x)+G(x,p)]\cap B(0,\delta).$ \\Therefore, there are $y\in F(x)\cap  B(-\bar{k},\theta)$ and $z\in G(x,p)\cap  B(\bar{k},\theta)$ such that $t=y+z.$\\
  Consequently,  $$t\in F(x)\cap  B(-\bar{k},\theta)+G(x,p)\cap  B(\bar{k},\theta)\subset F(x)+G(x,p)\cap  B(\bar{k},\theta).$$
  It follows that $$d(0,F(x)+G(x,p)\cap  B(\bar{k},\theta) )\le \Vert t\Vert.$$
  This yields $$d(0,F(x)+G(x,p)\cap  B(\bar{k},\theta))<d(0,F(x)+G(x,p))+\gamma,$$
and therefore, as $\gamma>0$ is arbitrarily small, we derive that 
   $$d(0,F(x)+G(x,p)\cap  B(\bar{k},\theta) )\le d(0,F(x)+G(x,p)).$$
  By (\ref{semi-metric}), one derives $$d(x,{\bf{S}}_{(F+G)}(p))\leq \tau d(0,F(x)+G(x,p)),\quad \mbox{for all}\quad (x,p)\in B(\bar{x},\delta) \times  \mathcal{U}.$$
  Case 2. $d(0,F(x)+G(x,p))\geq \delta/2.$ According to  condition  (c), the multifunction $p\rightrightarrows G(\bar{x},\cdot)$ is lower semicontinuous at $\bar{p}. $ It follows that  the distance function $d(0, F(\bar{x})+G(\bar{x},\cdot))$ is upper semicontinuous at $\bar p,$ and thus,  there exists a neighborhood $W$ of $\bar p$ such that $$d(0, F(\bar{x})+G(\bar{x},p)\le \delta/4,\; \text{for all}\; p\in W.$$
Shrinking $W$ smaller if necessary, we can assume that $W\subset \mathcal{U}.$
Choosing $0<\delta_{1}<\min\{\delta,\tau\delta/4\}.$  For every $(x,p)\in B(\bar{x},\delta_{1}) \times  W,$  and for every small $\varepsilon>0$, there exists $u\in {\bf{S}}_{(F+G)}(p)$ such that 
  $$d(\bar{x},u)\le (1+\varepsilon)\tau d(0,F(\bar{x})+G(\bar{x},p)).$$ 
  So,
 \begin{align*}
  d(x,u)\le d(x,\bar{x})+d(\bar{x},u)\\
  &<\delta_{1}+\tau(1+\varepsilon)d(0,F(\bar x)+G(\bar x,p))\\
  &<\tau\delta/4+\tau(1+\varepsilon)\delta/4\\
  &\le \tau/2 d(0,F(x)+G(x,p))\\
  &+\tau/2(1+\varepsilon) d(0,F(x)+G(x,p).\\
    \end{align*}
Taking the limit as $\varepsilon>0$ goes to $0$, it follows that $$d(x,{\bf{S}}_{(F+G)}(p))\leq \tau d(0,F(x)+G(x,p)),$$
  establishing the proof.\qed 
  \end{proof}
The following theorem establishes the Lipschitz property for the solution mapping $\S_{\mathcal{E}_{(F,G)}}.$
\begin{theorem} \label{lipmere} Let $X$ be a complete metric space, $Y$ be a Banach space, $P$ be a topological  space. Suppose that $F: X\rightrightarrows Y$
 and $G:X\times P\rightrightarrows Y$ are  multifunctions satisfying  conditions (a), (b), (c) in Lemma \ref{(F,G)}.

If  $F$ is metrically regular around $(\bar{x},-\bar{k})$  with modulus $\tau>0$ 
and $G$ is  pseudo-Lipschitz  around $(\bar{x},\bar{p},\bar{k})$ with respect to $x$, uniformly in $p$ with modulus $\lambda>0$ such that $\tau\lambda<1,$ then $\mathcal{E}_{(F,G)}$ is metrically regular around $(\bar{x},\bar{p},\bar{k},0)$ with respect to $(x,k)$, uniformly in $p$, with modulus $(\tau^{-1}-\lambda)^{-1}.$ 

Moreover,  assume in addition that $P$ be a metric space.  If $G$ is  pseudo-Lipschitz  around $(\bar{x},\bar{p},\bar{k})$ with respect to $p$, uniformly in $x$ with modulus $\gamma>0,$ then $\S_{\mathcal{E}_{(F,G)}}$ is pseudo-Lipschitz  around $((0,\bar{p}),(\bar{x},\bar{k}))$ with modulus $L=\gamma+(\gamma+1)(\tau^{-1}-\lambda)^{-1}).$ In particular,  ${\bf{S}}_{\mathcal{E}_{(F,G)}}$ is pseudo-Lipschitz  around $((0,\bar{p}),(\bar{x},\bar{k}))$ with modulus $\gamma(1+(\tau^{-1}-\lambda)^{-1}).$
\end{theorem}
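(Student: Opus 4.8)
The plan is to prove the two assertions in order: first establish metric regularity of $\mathcal{E}_{(F,G)}$ with respect to $(x,k)$ uniformly in $p$ (the parametric analogue of Theorem \ref{Slope O}), and then bootstrap this, together with the pseudo-Lipschitz dependence of $G$ on $p$, into the Aubin property of the solution map $\S_{\mathcal{E}_{(F,G)}}$.

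For the first assertion I would simply carry the parameter $p$ through the slope computation of Theorem \ref{Slope O}. Fix $p$ near $\bar p$ and regard $G(\cdot,p)$ as a single closed multifunction; by the uniform-in-$p$ pseudo-Lipschitz hypothesis it is pseudo-Lipschitz around $(\bar x,\bar k)$ with the same modulus $\lambda$, hence lower semicontinuous on its graph near $(\bar x,\bar k)$, so that $\varphi_{p,\mathcal{E}}$ is given by the second formula of Lemma \ref{tron}. The metric regularity of $F$ around $(\bar x,-\bar k)$ furnishes, exactly as in the proof of Theorem \ref{Slope O}, the slope bound $|\nabla\varphi_F(\cdot,w)|(u)\ge 1/\tau$ on a neighborhood that does not involve $p$; combining it with the inequality $\|z_n-k_n\|\le\lambda\,d(x_n,u_n)$ coming from the pseudo-Lipschitz property yields, for the weighted metric $d((x,k),(u,z))=\max\{d(x,u),\|z-k\|/\lambda\}$,
$$|\nabla\varphi_{p,\mathcal{E}}((\cdot,\cdot),y)|(x,k)\ge \tau^{-1}-\lambda$$
for all $(x,k,y)$ in a fixed neighborhood and all $p$ near $\bar p$. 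The crucial observation is that every constant appearing in that chain depends only on the data of $F$ and on the uniform modulus $\lambda$ of $G$, never on $p$, so the bound is genuinely uniform. Feeding it into Theorem \ref{paravasys} with $m=\tau^{-1}-\lambda$ produces the uniform error bound, that is, metric regularity of $\mathcal{E}_{(F,G)}$ around $(\bar x,\bar p,\bar k,0)$ with respect to $(x,k)$, uniformly in $p$, with modulus $\mu:=(\tau^{-1}-\lambda)^{-1}$.

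For the second assertion I would prove the Aubin estimate directly. Take $(y_1,p_1),(y_2,p_2)$ near $(0,\bar p)$ and $(x,k)\in\S_{\mathcal{E}_{(F,G)}}(y_2,p_2)$ near $(\bar x,\bar k)$, so that $y_2-k\in F(x)$ and $k\in G(x,p_2)$. The first move changes the parameter at fixed $x$: since $G$ is pseudo-Lipschitz with respect to $p$ uniformly in $x$ with modulus $\gamma$ and $k$ is close to $\bar k$, there is $k_1\in G(x,p_1)$ with $\|k_1-k\|\le\gamma\,d(p_1,p_2)$. As $y_2-k\in F(x)$, the point $y_2+(k_1-k)$ lies in $F(x)+k_1=\mathcal{E}_{(F,G)}((x,p_1),k_1)$, so that
$$d\bigl(y_1,\mathcal{E}_{(F,G)}((x,p_1),k_1)\bigr)\le \|y_1-y_2\|+\|k_1-k\|\le \|y_1-y_2\|+\gamma\,d(p_1,p_2).$$
The second move applies the uniform metric regularity of the first part at parameter $p_1$ and feasible point $(x,k_1)$, producing $(x',k')\in\S_{\mathcal{E}_{(F,G)}}(y_1,p_1)$ with $d((x,k_1),(x',k'))\le \mu(\|y_1-y_2\|+\gamma\,d(p_1,p_2))$. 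A triangle inequality, using $d((x,k),(x,k_1))=\|k-k_1\|\le\gamma\,d(p_1,p_2)$, then gives
$$d\bigl((x,k),(x',k')\bigr)\le \mu\,\|y_1-y_2\|+\gamma(1+\mu)\,d(p_1,p_2)\le\bigl(\mu+\gamma(1+\mu)\bigr)\,d\bigl((y_1,p_1),(y_2,p_2)\bigr),$$
and since $\mu+\gamma(1+\mu)=\gamma+(\gamma+1)\mu=L$ this is precisely the Aubin property with modulus $L$. Specializing to $y_1=y_2=0$, where the first term drops out and $\S_{\mathcal{E}_{(F,G)}}(0,\cdot)={\bf{S}}_{\mathcal{E}_{(F,G)}}(\cdot)$, leaves the modulus $\gamma(1+\mu)=\gamma(1+(\tau^{-1}-\lambda)^{-1})$, which is the final ``in particular'' claim.

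The main obstacle I expect is bookkeeping rather than anything conceptual. First, one must verify that the slope estimate of Theorem \ref{Slope O} really is uniform in $p$, which rests on the modulus $\lambda$ being independent of $p$ and on $F$ being parameter-free. Second, and more delicate, one must keep the two metrics consistent: the weighted metric $\max\{d(\cdot,\cdot),\|\cdot\|/\lambda\}$ that delivers the clean modulus $\mu$ in the first part, versus the product metric on $X\times Y$ under which the final Aubin estimate is stated, so that the step $d((x,k),(x,k_1))=\|k-k_1\|$ contributes a factor $\gamma$ (and not $\gamma/\lambda$) and the constants assemble to exactly $L=\gamma+(\gamma+1)\mu$. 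Once the two pseudo-Lipschitz hypotheses are kept in separate roles — the one in $x$ feeding metric regularity through the first part, the one in $p$ feeding the parameter dependence through the first move — the remaining estimates are routine.
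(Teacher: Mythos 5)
Your proposal is correct and follows essentially the same route as the paper: the first assertion is obtained by rerunning the slope argument of Theorem \ref{Slope O} with the parameter $p$ carried along (the paper simply declares this part "completely similar" and omits it), and the second assertion is proved by exactly the paper's two-step estimate — first pass from $k\in G(x,p)$ to $k'\in G(x,p')$ with $\|k-k'\|\le\gamma d(p,p')$ via the pseudo-Lipschitz property in $p$, then apply the uniform metric regularity of $\mathcal{E}_{(F,G)}$ at $(x,k')$ and conclude by the triangle inequality, yielding the same modulus $L=\gamma+(\gamma+1)(\tau^{-1}-\lambda)^{-1}$. Your explicit flagging of the two metrics on $X\times Y$ (the $\lambda$-weighted one from Theorem \ref{Slope O} versus the product metric in the final Aubin estimate) is a point the paper itself glosses over, but it does not change the argument.
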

\begin{proof}
 The first part is the parametrized  version of  Theorem \ref{Slope O}. Its proof is completely similar to the one of  Theorem \ref{Slope O}, and is omitted. For the second part, as $\mathcal{E}_{(F,G)}$ is metrically regular around $(\bar{x},\bar{p},\bar{k},0)$ with respect to $(x,k)$, uniformly in $p$, with modulus $(\tau^{-1}-\lambda)^{-1},$ there exists $\delta_1>0$ such that 
\begin{equation}\label{Ine@@}
d((x,k), \S_{\mathcal{E}_{(F,G)}}(y,p))\le (\tau^{-1}-\lambda)^{-1}\varphi_{p,\mathcal{E}}((x,k),y),
\end{equation}
for all $(x,p,k,y)\in B((\bar{x},\bar{p},\bar{k},0),\delta_1).$ \\
Now, if $G$ is  pseudo-Lipschitz  around $(\bar{x},\bar{p},\bar{k})$ with respect to $p$, uniformly in $x$ with modulus $\gamma>0$ then there is $\delta_2>0$ 
 such that 
 \begin{equation}\label{Lip}
 G(x,p)\cap B(\bar{k},\delta_2)\subset G(x,p')+\gamma d(p,p') \bar{B}_Y, 
 \end{equation}
$ \text{for all}\; p,p'\in B(\bar{p},\delta_2), \text{for all}\; x\in B(\bar{x},\delta_2).$\\
Set $\alpha:=\min\{\delta_1/(\gamma+1),\delta_2\}.$ 
  Fix $(y,p),(y',p')\in B(0,\alpha)\times B(\bar{p},\alpha).$ Take $(x,k) \in \S_{\mathcal{E}_{(F,G)}}(y,p))\cap [B(\bar{x},\alpha)\times B(\bar{k},\alpha)].$\\
Since $(x,k) \in \S_{\mathcal{E}_{(F,G)}}(y,p))\cap [B(\bar{x},\alpha)\times B(\bar{k},\alpha)],$ then $$ y\in F(x)+k, k\in G(x,p) \; \text{and}\; (x,k)\in B(\bar{x},\alpha)\times B(\bar{k},\alpha).$$
Along with (\ref{Lip}), we can find that $ k^\prime\in G(x,p')$ such that
$$ \|k-k^\prime\| \le \gamma d(p,p^\prime)<\gamma\alpha,$$
which follows that $k^\prime \in B(\bar k,\delta_1).$
Therefore, by (\ref{Ine@@}), one has
\begin{align*}
   d((x,k^\prime), \S_{\mathcal{E}_{(F,G)}}(y',p'))
  &\le  (\tau^{-1}-\lambda)^{-1}\varphi_{p',\mathcal{E}}((x,k^\prime),y'),\\
  &\le (\tau^{-1}-\lambda)^{-1}d(y',F(x)+k^\prime)),\\
     \end{align*}
 Hence, by noting that $y\in F(x)+k,$ one deduces that
\begin{equation}\label{@@bis}
  \end{equation}
  $$
 \begin{array}{ll}
   d((x,k), \S_{\mathcal{E}_{(F,G)}}(y',p'))
  &\le \|k-k^\prime\|+d((x,k^\prime), \S_{\mathcal{E}_{(F,G)}}(y',p'))\notag\\
&\le\gamma d(p,p^\prime)+(\tau^{-1}-\lambda)^{-1}d(y',F(x)+k^\prime)),\notag\\
  &\le \gamma d(p,p^\prime)+(\tau^{-1}-\lambda)^{-1}(\|y-y^\prime\|+\|k-k^\prime\|)\notag\\
  &\le \gamma(1+(\tau^{-1}-\lambda)^{-1})d(p,p^\prime)+(\tau^{-1}-\lambda)^{-1}\|y-y^\prime\|\notag
  \end{array}
$$
  and so
  \begin{align*}
  &\S_{\mathcal{E}_{(F,G)}}(y,p))\cap [B(\bar{x},\alpha)\times B(\bar{k},\alpha)]\\
 & \subseteq \S_{\mathcal{E}_{(F,G)}}(y',p')+Ld((y',p'),(y,p)) \bar{B}_{X}\times \bar{B}_{Y},\\
  \end{align*}
where, $L=\gamma+(\gamma+1)(\tau^{-1}-\lambda)^{-1},$ and by taking $y=y^\prime=0$ in relation (\ref{@@bis}), one also derives that ${\bf{S}}_{\mathcal{E}_{(F,G)}}$ is pseudo-Lipschitz  around $((0,\bar{p}),(\bar{x},\bar{k}))$ with modulus $\gamma(1+(\tau^{-1}-\lambda)^{-1}).$\\
  The   proof is complete. \qed 
  \end{proof}

If we add the assumption  that $(F,G)$ is locally sum-stable, we obtain the Lipschitz property of  ${\bf S}_{(F+G)}$. 
\begin{theorem}\label{metlip}
Let $X$ be a complete metric space and $Y$ be a Banach space, $P$ be a metric space. Suppose that $F: X\rightrightarrows Y$ and $G:X\times P\rightrightarrows Y$ satisfy   conditions (a), (b), (c) in Lemma \ref{(F,G)}. Moreover, assume that
 \item{(i)} $(F,G)$ is locally sum-stable around $(\bar{x},\bar{p},-\bar{k}, \bar{k})$;
\item{(ii)} $F$ is   metrically regular around $(\bar{x},-\bar{k})$ with modulus $\tau>0;$ 
\item{(iii)}  $G$ is  pseudo-Lipschitz  around $(\bar{x},\bar{p},\bar{k})$ with respect to $x$, uniformly in $p$ with modulus $\lambda>0$ such that $\tau\lambda<1;$ 
\item{(iv)}  $G$ is  pseudo-Lipschitz  around $(\bar{x},\bar{p},\bar{k})$ with respect to $p$, uniformly in $x$ with modulus $\gamma>0$.
Then ${\bf S}_{(F+G)}$ is  Robinson  metrically regular around $(\bar{x},\bar{p})$ with modulus $(\tau^{-1}-\lambda)^{-1}.$ Moreover,
${\bf{S}}_{(F+G)}$ is pseudo-Lipschitz  around $(\bar{x},\bar{p})$ with constant $\gamma(\tau^{-1}-\lambda)^{-1}.$
\end{theorem}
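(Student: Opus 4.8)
The plan is to establish the two conclusions separately: first the Robinson metric regularity by chaining the three results already proved in this section, and then the pseudo-Lipschitz property by a direct estimate that exploits the sum-stability together with the partial Lipschitz behaviour of $G$ in $p$.

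For the Robinson metric regularity, conditions (ii) and (iii) are exactly the hypotheses of the first part of Theorem \ref{lipmere}, so $\mathcal{E}_{(F,G)}$ is metrically regular around $(\bar{x},\bar{p},\bar{k},0)$ with respect to $(x,k)$, uniformly in $p$, with modulus $(\tau^{-1}-\lambda)^{-1}$. Writing $m:=\tau^{-1}-\lambda>0$, this means that on some neighborhood $md((x,k),\S_{\mathcal{E}_{(F,G)}}(y,p))\le\varphi_{p,\mathcal{E}}((x,k),y)$. This is precisely the hypothesis of Proposition \ref{papro}, which then furnishes a $\theta>0$ and a neighborhood on which
$$md(x,{\bf{S}}_{(F+G)}(p))\le d(0,F(x)+G(x,p)\cap B(\bar{k},\theta)),$$
i.e. (\ref{semi-metric}) with the constant there equal to $(\tau^{-1}-\lambda)^{-1}$. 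Since by (i) the pair $(F,G)$ is locally sum-stable around $(\bar{x},\bar{p},-\bar{k},\bar{k})$, Proposition \ref{@} applies verbatim and yields that ${\bf{S}}_{(F+G)}$ is Robinson metrically regular around $(\bar{x},\bar{p})$ with modulus $(\tau^{-1}-\lambda)^{-1}$.

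For the pseudo-Lipschitz property I would argue directly rather than merely project the Aubin property of $\S_{\mathcal{E}_{(F,G)}}$ from Theorem \ref{lipmere}, since projection would only produce the larger modulus $\gamma(1+(\tau^{-1}-\lambda)^{-1})$. Fix $p,p'$ near $\bar{p}$ and take $x\in{\bf{S}}_{(F+G)}(p)$ close to $\bar{x}$, so that $0\in F(x)+G(x,p)$ with $0\in B(0,\delta)$. Local sum-stability around $(\bar{x},\bar{p},-\bar{k},\bar{k})$ produces a splitting $0=y+z$ with $-z=y\in F(x)$ near $-\bar{k}$ and $z\in G(x,p)$ near $\bar{k}$; in particular $z\in G(x,p)\cap B(\bar{k},\delta_2)$, so assumption (iv) yields $z'\in G(x,p')$ with $\|z-z'\|\le\gamma\,d(p,p')$. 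As $-z\in F(x)$ and $z'\in G(x,p')$, the point $z'-z$ lies in $F(x)+G(x,p')$, whence $d(0,F(x)+G(x,p'))\le\|z'-z\|\le\gamma\,d(p,p')$. Applying the Robinson metric regularity just established, now at the parameter $p'$, gives
$$d(x,{\bf{S}}_{(F+G)}(p'))\le(\tau^{-1}-\lambda)^{-1}d(0,F(x)+G(x,p'))\le\gamma(\tau^{-1}-\lambda)^{-1}d(p,p'),$$
which is exactly the asserted pseudo-Lipschitz estimate with modulus $\gamma(\tau^{-1}-\lambda)^{-1}$.

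The first part is essentially bookkeeping of constants across three statements and presents no difficulty. The main obstacle is the coordinated choice of neighborhoods in the second part: the sum-stable splitting must place $z$ inside the ball $B(\bar{k},\delta_2)$ on which (iv) holds, the perturbed element $z'$ must stay within the region where the Robinson estimate is valid, and this must hold uniformly for $x$ ranging over ${\bf{S}}_{(F+G)}(p)\cap\mathcal{U}$ and for $p,p'$ in a common neighborhood of $\bar{p}$. Once the radii $\varepsilon,\delta,\delta_2$ and the regularity neighborhood are intersected appropriately, the estimate goes through; checking that these finitely many constraints are simultaneously satisfiable is the only genuinely technical point.
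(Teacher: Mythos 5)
Your proposal is correct and follows essentially the same route as the paper: the Robinson metric regularity is obtained by chaining Theorem \ref{lipmere}, Proposition \ref{papro} and Proposition \ref{@} in exactly that order, and the pseudo-Lipschitz estimate is derived directly by splitting $0=y+z$ via sum-stability, perturbing $z$ to $z'\in G(x,p')$ with $\|z-z'\|\le\gamma\,d(p,p')$ using (iv), and feeding $d(0,F(x)+G(x,p'))\le\gamma\,d(p,p')$ into the Robinson estimate at $p'$ --- which is precisely the paper's computation with $\alpha=\min\{\delta_1,\delta_2,\delta_3\}$ handling the neighborhood bookkeeping you flag. Your observation that projecting the Aubin property of $\S_{\mathcal{E}_{(F,G)}}$ would only yield the weaker modulus $\gamma(1+(\tau^{-1}-\lambda)^{-1})$ is accurate and explains why the direct argument is needed.
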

\begin{proof}
Applying Proposition \ref{lipmere}, Proposition 23 and Proposition 20, respectively, we obtain that $S_{(F+G)}$ is  Robinson metrically regular  around $(\bar{x},\bar{p})$ with modulus $(\tau^{-1}-\lambda)^{-1}.$ Thus, there exists $\delta_1>0$ such that $$d(x,{\bf S}_{(F+G)}(p))\le (\tau^{-1}-\lambda)^{-1} d(0,F(x)+G(x,p)),\;\text{for all} \;(x,p)\in B((\bar{x},\bar{p}),\delta_1).$$
On the other hand, since $G$ is  pseudo-Lipschitz  around $(\bar{x},\bar{p},\bar{k})$ with respect to $p$, uniformly in $x$ with modulus $\gamma>0,$  we can find $\delta_2>0$
 such that 
 $$ G(x,p)\cap B(\bar{k},\delta_2)\subset G(x,p')+\gamma d(p,p') \bar{B}_Y,$$
 $ \text{for all}\; p,p'\in B(\bar{p},\delta_2), \text{for all}\; x\in B(\bar{x},\delta_2).$ 
Moreover, since the pair  $(F,G)$ is locally sum-stable around $(\bar{x},\bar{p},-\bar{k}, \bar{k})$, there is $\delta_3>0$ such that,  for every $(x,p)\in B(\bar x,\delta_3)\times  B(\bar p,\delta_3)$ and every $w\in [F(x)+G(x,p)]\cap B(0,\delta_3)$, there are $y\in F(x)\cap B(-\bar{k},\delta_2), z\in G(x,p)\cap B(\bar{k},\delta_2)$ such that $w=y+z.$
 Set  $\alpha:=\min\{\delta_1,\delta_2,\delta_3\}.$ Take  $p,p'\in B(\bar{p},\alpha),$ and $x\in {\bf S}_{(F+G)}(p)\cap B(\bar{x},\alpha),$ i.e., $0\in F(x)+G(x,p)$ and $x\in  B(\bar{x},\alpha)$.\\
 Moreover, we observe that for every $w\in [F(x)+G(x,p)]\cap B(0,\alpha),$ 
  \begin{align*}
 w\in  F(x)\cap B(-\bar{k},\delta_2))+G(x,p)\cap B(\bar{k},\delta_2)\subseteq F(x)+G(x,p')+\gamma d(p,p') \bar{B}_Y.
  \end{align*}
  Thus, 
  $$ [F(x)+G(x,p)]\cap B(0,\alpha)\subseteq F(x)+G(x,p')+\gamma d(p,p') \bar{B}_Y.$$
  Since $0\in F(x)+G(x,p)$, and also $0\in  [F(x)+G(x,p)]\cap B(0,\alpha)$, thus $$0\in  F(x)+G(x,p')+\gamma d(p,p') \bar{B}_Y.$$ It follows that there is $w\in F(x)+G(x,p')$ such that   $\Vert w\Vert\le \gamma d(p,p').$
 Therefore,  $$ d(x,{\bf S}_{(F+G)}(p'))\le (\tau^{-1}-\lambda)^{-1} d(0,F(x)+G(x,p'))\le (\tau^{-1}-\lambda)^{-1} \Vert w\Vert\le \gamma(\tau^{-1}-\lambda)^{-1} d(p,p').$$
So, $${\bf S}_{(F+G)}(p)\cap B(\bar{x},\alpha)\subseteq {\bf S}_{(F+G)}(p')+\gamma(\tau^{-1}-\lambda)^{-1} d(p,p') \bar{B}_{X},$$ establishing  the proof. 
\end{proof}
 
\section{Concluding Remarks}\label{concl}

We conclude the  paper with some comments and perspectives on   metric regularity/pseudo-Lipschitzness of set-valued mappings and on the   study of the associated  variational systems.
It is not possible to obtain effective  results on the Lipschitzness of the sum  when the  both   multifunctions $F$ and $G$   depend on the parameter $p$ (see  \cite{ABor}, and \cite {Dustru}). Similarly to \cite {Dustru}, we also used variational techniques to obtain the desired variational properties of the  sum or  to  the correspondent variational systems; however, in this article, we used the theory of error bound systematically to study metric regularity of a type of epigraphical multifunction associated to two given set-valued mappings. On one hand,  this approach, avoids the closedness of the sum mapping $F+G$,  on the  other hand,  it provides  a way to derive   variational properties of the system  associated to the epigraphical mapping without using the sum-stable property (Theorem \ref{lipmere}).
This method, allows  to study more general kinds of multifunctions,     such as composition of two set-valued mappings,  as well as variational systems associated to them. 

Moreover, we also note that if a set-valued mapping $F:X\rightrightarrows Y $ is pseudo-Lipschitz  around $(\bar x,\bar y)\in \gph F$,  then it is lower semicontinuous at $\bar x$. So, in any results above,  if we impose the  assumption of pseudo-Lipschitzness to $F$,  then  the assumption of lower semicontinuity is automatically satisfied.

\end{document}